\documentclass[a4paper,11pt,reqno]{amsart}

\usepackage[utf8]{inputenc}

\usepackage[a4paper,top=2cm,bottom=2cm,left=2.2cm,right=2.2cm]{geometry}

\usepackage{titlesec}
\usepackage{lipsum}
\usepackage{enumerate}
\usepackage{enumitem}
\usepackage{mathrsfs}
\usepackage{amsmath,amssymb,amsthm}
\usepackage{mathtools}
\usepackage{graphicx, float, subfigure}
\usepackage{url}
\usepackage{tikz}
\usetikzlibrary{matrix, backgrounds}

\usepackage{bbold}
\usepackage{mleftright}
\usepackage{color}
\usepackage{xcolor}
\usepackage{colortbl}
\usepackage{array}

\usepackage{ytableau}
\usepackage[all,cmtip]{xy}
\usepackage{multirow}
\usepackage{makecell}
\usepackage[colorinlistoftodos]{todonotes}
\usepackage[colorlinks=true, allcolors=red]{hyperref}
\usepackage{harpoon}
\usepackage{MnSymbol}
\usepackage{esvect}
\usepackage{diagbox}
\usepackage[title]{appendix}
\usepackage{tikz-cd}

\theoremstyle{plain}
\newtheorem{theorem}{\scshape Theorem}[section]

\newtheorem{lemma}[theorem]{\scshape Lemma}
\newtheorem{corollary}[theorem]{\scshape Corollary}

\newtheorem*{assumption*}{\scshape Assumption}

\newtheorem*{claim*}{Claim}

\theoremstyle{definition}

\newtheorem{definition}[theorem]{\scshape Definition}
\newtheorem{remark}[theorem]{\scshape Remark}
\newtheorem{example}[theorem]{\scshape Example}

\numberwithin{equation}{section}

\titleformat{\section}{\centering\bfseries}{\thesection}{1em}{\MakeUppercase}
\titleformat{\subsection}{\bfseries}{\thesubsection}{1em}{}

\begin{document}
\title{Richardson tableaux and Motzkin paths}

\author{Sen-Peng Eu, Tung-Shan Fu, Yi-Hao Kao, and Yi-Lin Lee}
\address{(S.-P. Eu) Department of Mathematics, National Taiwan Normal University, Taipei, Taiwan}
\email{speu@math.ntnu.edu.tw}

\address{(T.-S. Fu) Department of Applied Mathematics, National Pingtung University, Pingtung, Taiwan.}
\email{tsfu@mail.nptu.edu.tw}

\address{(Y.-H. Kao) Department of Mathematics, National Taiwan Normal University, Taipei, Taiwan}
\email{61440006s@ntnu.edu.tw}

\address{(Y.-L. Lee) Department of Mathematics, National Cheng Kung University, Tainan, Taiwan}
\email{yillee@gs.ncku.edu.tw}

\subjclass[2020]{05A15, 05A19, 05E10, 14M15}
\keywords{Motzkin paths; noncrossing involutions; Richardson varieties; Springer fibers; the Robinson--Schensted algorithm; Young tableaux.}

\begin{abstract}
    Richardson tableaux were introduced by Karp and Precup in their study of irreducible components of Springer fibers that are Richardson varieties. Guo gave an explicit bijection between Richardson tableaux and Motzkin paths through noncrossing involutions and the Robinson--Schensted (RS) algorithm. In this paper, we study Richardson tableaux of a fixed shape from the viewpoint of Motzkin paths. We introduce a shape algorithm, independent of the RS algorithm, that directly determines the shape of the corresponding Richardson tableau from a Motzkin path. Based on this algorithm, we construct a local bijection which keeps track of the major statistic and reproves combinatorially Karp and Precup's $q$-enumeration formula for Richardson tableaux of a given shape. We also prove in two ways a conjecture of Guo on the comajor generating function for Richardson tableaux with a prescribed number of odd columns.
\end{abstract}

\maketitle

\section{Introduction}\label{sec.intro}

Tableaux combinatorics is the study of tableaux and the rich network of bijections, symmetries, enumerative formulas, and dynamical operators that connect them to, for instance, representation theory, symmetric functions, and geometry of flag varieties. Broadly speaking, a \textit{tableau} is an arrangement of numbers, symbols, or objects that fill the boxes of a specific shape---most commonly a Young diagram---according to a set of rules. One important family of tableaux, called standard Young tableaux, is a fundamental object in algebraic and geometric combinatorics, which is introduced below.

A \textit{partition} $\lambda$ of $n$ is a sequence $(\lambda_1,\lambda_2,\dots,\lambda_{\ell})$ of integers such that $\lambda_1 \geq \lambda_2 \geq \cdots \geq \lambda_{\ell} > 0$ and $\lambda_1 + \lambda_2 + \cdots + \lambda_\ell= n$. We write $\lambda \vdash n$ or $|\lambda| = n$ (\textit{size} of $\lambda$). Sometimes we use the notation $k^i$ for $i$ consecutive parts equal to $k$. The \textit{conjugate} of $\lambda$, denoted $\mu = \lambda^t$, is the partition $(\mu_1,\mu_2,\dots,\mu_{\lambda_1})$ defined by $\mu_i=|\{j ~|~ \lambda_j \geq i\}|$. The \textit{Young diagram of shape $\lambda$} is a left-justified array of boxes with $\lambda_i$ boxes in the $i$th row (from top to bottom), presented in English notation. A \textit{standard Young tableau} (SYT) $T$ of shape $\lambda$ is a filling of the boxes of the Young diagram of shape $\lambda$ with $\{1,2,\dots,|\lambda|\}$ such that the numbers are strictly increasing along each row and each column. See Figure \ref{fig.SYTs} for examples. The number of SYTs of shape $\lambda$ equals the dimension of the corresponding irreducible representation of the symmetric group $\mathfrak{S}_{|\lambda|}$ indexed by $\lambda$. Geometrically, Spaltenstein \cite{Spa76} established that these tableaux index the irreducible components of the Springer fiber $\mathcal{B}_\lambda$. For a detailed combinatorial treatment of tableaux, see Stanley \cite{StanEC2}; for their applications to geometry and representation theory, see Fulton \cite{Fulton97}.

Richardson tableaux were introduced by Karp and Precup \cite{KP25} to index the irreducible components of Springer fibers that coincide with Richardson varieties (see \cite[Sections 1 and 2]{KP25}). Given an SYT $T$ of size $n$, let $r_j(T)$ be the row number in which the entry $j$ appears in $T$, for $1 \leq j \leq n$. We write $T_{<j}$ for the subtableau of $T$ which consists of the entries $1,2,\dots,j-1$. We call $T$ a \textit{Richardson tableau} if for each $1 \leq j \leq n$ with $r_j(T) \geq 2$, the largest entry of $T_{<j}$ in row $r_j(T)-1$ is greater than every entry of $T_{<j}$ in rows $i \geq r_j(T)$. Let $\mathcal{R}(n)$ (resp., $\mathcal{R}(\lambda)$) be the set of Richardson tableaux of size $n$ (resp., shape $\lambda$). For example, a Richardson tableau of size $20$ is shown in Figure \ref{fig.Richardsonexample}. For the tableau $T$ in Figure \ref{fig.Richardsonnonexample}, setting $j=9$ gives $r_9(T) = 2$ and $T_{<9}$ consists of the entries $1,2,\dots,8$. Note that the maximum entry of $T_{<9}$ in row $1$ is $6$, which is less than the entry $7$ in row $3$ (or $8$ in row $4$). Thus, $T$ is not a Richardson tableau.  
\begin{figure}[hbt!]
    \centering
    \subfigure[]{\label{fig.Richardsonexample}
    \begin{tikzpicture}
        \matrix (m) [matrix of nodes,
                     ampersand replacement=\&,
                     nodes={draw, minimum size=0.8cm, anchor=center},
                     column sep=-\pgflinewidth, row sep=-\pgflinewidth]{
            1 \& 4 \& 8 \& 10 \& 15 \& 17\\
            2 \& 5 \& 9 \& 16\\
            3 \& 11 \& 18\\
            6 \& 12 \& 19\\
            7 \& 13\\
            14\\
            20\\
        };
    \end{tikzpicture}
    }
    \hspace{20mm}
    \subfigure[]{\label{fig.Richardsonnonexample}
        \begin{tikzpicture}
            \matrix (m) [matrix of nodes,
                         ampersand replacement= \& ,
                         nodes={draw, minimum size=0.8cm, anchor=center},
                         column sep=-\pgflinewidth, row sep=-\pgflinewidth]{
                |[fill=yellow!30]|1 \& |[fill=yellow!30]|4 \& |[fill=red!30]|6 \& 10 \& 15 \& 17\\
                |[fill=yellow!30]|2 \& |[fill=yellow!30]|5 \& 9 \& 16\\
                |[fill=yellow!30]|3 \& |[fill=yellow!30]|7 \& 18\\
                |[fill=yellow!30]|8 \& 12 \& 19\\
                11 \& 13\\
                14\\
                20\\
            };
        \end{tikzpicture}
    }
    \label{fig.SYTs}
    \caption{Two standard Young tableaux of size $20$, where (a) is Richardson and (b) is not Richardson.}
\end{figure}

The main result in Karp and Precup's work provides various combinatorial and geometric characterizations of Richardson tableaux. In particular, they proved the following results. 
\begin{theorem}[{\cite[Theorem 4.4]{KP25}}]\label{thm.KP25-Moz}
    The number of Richardson tableaux of size $n$ is given by the $n$th Motzkin number.
\end{theorem}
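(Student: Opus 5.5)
We plan to prove Theorem~\ref{thm.KP25-Moz} bijectively, by identifying Richardson tableaux of size $n$ with noncrossing involutions of $[n]$. A noncrossing involution is the same as a noncrossing partial matching, and these are counted by the $n$th Motzkin number: read $1,\dots,n$ from left to right, and record an up step at the smaller element of each arc, a down step at the larger element, and a level step at each fixed point. The map sends an involution $\sigma$ to its Robinson--Schensted insertion tableau $P(\sigma)$. Since $\sigma$ is an involution, $P(\sigma)=Q(\sigma)$, so $\sigma \mapsto P(\sigma)$ is injective on involutions. Hence it suffices to show two things:
\begin{enumerate}
\item[(i)] $P(\sigma)$ is Richardson whenever $\sigma$ is noncrossing;
\item[(ii)] every Richardson tableau arises in this way.
\end{enumerate}

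For (i), we use $P(\sigma)=Q(\sigma)$ and read the tableau as a recording tableau. The entry $j$ sits in row $r_j$, where $r_j$ is the row in which the growth of the insertion of $\sigma(1)\cdots\sigma(j)$ ends. We argue by induction on $j$, maintaining an invariant on the current insertion tableau of the prefix $\sigma(1)\cdots\sigma(j-1)$. The prefix consists of values that are either closed (both ends of an arc seen, or a fixed point) or open (left endpoints whose partners are still to come). The noncrossing condition says the open arcs are nested. Hence the next right endpoint to appear is $\sigma(j)$ with $\sigma(j)$ the largest currently open left endpoint, and this value bumps a chain of entries that are themselves nested. The invariant we aim for expresses this nesting in terms of the shape and the row maxima $m_1,m_2,\dots$ of $T_{<j}$.

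We then check the three step types in turn:
\begin{itemize}
\item a fixed point or a left endpoint places $j$ at the end of row $1$ (its value exceeds everything relevant), which is always allowed;
\item a right endpoint ends the bumping path in some row $r\ge 2$, and we must show $m_{r-1}>m_i$ for all $i\ge r$ in $T_{<j}$.
\end{itemize}
That inequality should follow because the entries recorded in rows $\ge r$ were created by arcs nested inside the arc closed at step $j$.

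For (ii), we do not construct the inverse directly; instead we count. Let $R_n=|\mathcal{R}(n)|$. Consider the entry $n$ of a Richardson tableau, or more conveniently the first time the tableau returns to a canonical state. We aim to show that $R_n$ satisfies the Motzkin recurrence
\[
R_n=R_{n-1}+\sum_{k=0}^{n-2}R_kR_{n-2-k}.
\]
The first term corresponds to $n$ being a singleton added at the end of row $1$. The sum corresponds to the decomposition of the tableau according to the entry $k+2$ that closes the arc started at $k+1$. Combined with injectivity from (i), this gives surjectivity.

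An alternative, more self-contained route for (ii) is a generating tree. Adding $j$ to a Richardson tableau $T_{<j}$ is allowed exactly in row $1$ or in a row $r$ such that the shape permits it and $m_{r-1}>\max_{i\ge r} m_i$. One checks that the admissible rows are governed by a stack-like statistic, and that this statistic changes by $+1$, $0$ or $-1$ with the right multiplicities, which is the Motzkin path recursion.

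We expect the main obstacle to be (i): controlling the RS bumping paths for noncrossing involutions, i.e.\ formulating the invariant on the insertion tableau precisely enough to show the Richardson inequality at the rows where growth occurs. We would first test the invariant ``the row maxima of rows $\ge 2$ are exactly the inner ends of currently nested open structure, in decreasing order down the rows'' on small cases before attempting the general induction.
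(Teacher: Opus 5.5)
The paper does not prove Theorem~\ref{thm.KP25-Moz} itself. It quotes the statement from \cite{KP25}, where the argument is by generating functions. Its bijective form is Guo's Theorem~\ref{thm.Guo} composed with the arc-diagram bijection $\mathcal{I}(n)\to\mathcal{M}(n)$ of Section~\ref{sec.preinvmoz}. Your plan amounts to reproving Guo's theorem. Your injectivity argument and the reduction to (i) and (ii) are sound, but there is a genuine gap, mainly in (ii), which is where the statement actually lives.

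If you could prove $R_n=R_{n-1}+\sum_{k=0}^{n-2}R_kR_{n-2-k}$ directly on tableaux, the theorem would follow by induction, and (i) would be superfluous; it only upgrades the count to Guo's bijection. The first term is fine intrinsically: placing $n$ at the end of row $1$ is always admissible, so these tableaux are counted by $R_{n-1}$.

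The second term is the problem. For tableaux with $n$ in a row $r\ge 2$ you decompose ``according to the arc closed by the last entry'', but arcs exist only on the involution side. Using them to count $\mathcal{R}(n)$ presupposes the surjectivity you are trying to prove. What you need is a decomposition at the level of tableaux, for example:
\begin{itemize}
\item $T\circ U$ is Richardson if and only if $T$ and $U$ are;
\item the factorization into prime tableaux is unique;
\item the operation in the proof of Lemma~\ref{lem.shapeinvariant} (raise all entries by $1$, push the first column down one box, put $1$ on top and $m$ at the bottom of the first column) maps $\mathcal{R}(m-2)$ bijectively onto the prime Richardson tableaux of size $m$.
\end{itemize}
The last fact genuinely uses the Richardson condition: the prime standard tableau with rows $1\,2$ and $3\,4$ does not have this form. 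None of this appears in your proposal.

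The generating-tree alternative is likewise only asserted, and the obvious label fails. Both tableaux of size $2$ have two admissible rows. The children of the one-row tableau have $2,2$ admissible rows, while the children of the one-column tableau have $3,2$. So no rewriting rule on that label exists, and a finer statistic would have to be found and its rule proved.

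Part (i) also contains a false step. A fixed point or left endpoint $j$ lands at the end of row $1$ only when no arc is open at time $j$. Otherwise $\sigma(j)<\sigma(i)$ for every open arc $(i,\sigma(i))$ with $i<j<\sigma(i)$; for a left endpoint this is exactly the nesting forced by noncrossing. Hence $\sigma(j)$ bumps. Two counterexamples:
\begin{itemize}
\item For $\sigma=321=(1,3)(2)$, the fixed point $2$ lands in row $2$ of $P(\sigma)$, which is a single column.
\item For $\sigma=4321$, the left endpoint $2$ also lands in row $2$.
\end{itemize}
So the Richardson inequality must be checked for these steps as well. The invariant on row maxima that would do this is never formulated or proved; only right endpoints are correctly identified as landing in a row $\ge 2$.
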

\begin{theorem}[{\cite[Theorem 4.8]{KP25}}]\label{thm.KP25-q}
    Let $\lambda=(\lambda_1,\dots,\lambda_\ell)$ be a partition. Then the generating function of Richardson tableaux of shape $\lambda$ weighted by the major statistic is
    \begin{equation}\label{eq.Rq-analogue}
        \sum_{T \in \mathcal{R}(\lambda)}q^{\mathsf{maj}(T)} = q^{\sum_{2 \leq i \leq j \leq \ell} \lambda_i \lambda_j} \prod_{i=1}^{\ell-1} \begin{bmatrix}
            \lambda_i+\lambda_{i+2}+\lambda_{i+3}+\cdots + \lambda_{\ell} \\ \lambda_{i+1}+\lambda_{i+2}+\lambda_{i+3}+\cdots + \lambda_{\ell}
        \end{bmatrix}_q,
    \end{equation}
    where the $i=\ell - 1$ term of the product is $\begin{bmatrix}
        \lambda_{\ell-1}\\
        \lambda_\ell
    \end{bmatrix}_q$.
    In particular, when $q=1$, we obtain the number of Richardson tableaux of shape $\lambda$,
    \begin{equation}
        |\mathcal{R}(\lambda)| = \prod_{i=1}^{\ell-1} \binom{\lambda_i+\lambda_{i+2}+\lambda_{i+3}+\cdots + \lambda_{\ell} }{\lambda_{i+1}+\lambda_{i+2}+\lambda_{i+3}+\cdots + \lambda_{\ell}}.
    \end{equation}
\end{theorem}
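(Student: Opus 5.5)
We prove the formula by induction on the number of rows $\ell$, removing the bottom row $\lambda_\ell$. The case $\ell=1$ is trivial: there is one tableau, its major index is $0$, and both the product and the exponent are empty. For the inductive step we describe the Richardson tableaux of shape $\lambda$ in terms of those of shapes obtained by deleting or merging rows. The target is a recursion of the form
\begin{equation*}
\sum_{T\in\mathcal{R}(\lambda)} q^{\mathsf{maj}(T)} \;=\; q^{(\ast)}\begin{bmatrix}\lambda_{\ell-1}\\ \lambda_\ell\end{bmatrix}_q \sum_{T'\in\mathcal{R}(\lambda')} q^{\mathsf{maj}(T')},
\end{equation*}
where $\lambda'$ is a suitable shape with fewer rows. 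Iterating this recursion should produce the nested binomials
\[
\begin{bmatrix}\lambda_i+\lambda_{i+2}+\cdots+\lambda_\ell\\ \lambda_{i+1}+\cdots+\lambda_\ell\end{bmatrix}_q .
\]
The upper entries $\lambda_i+\lambda_{i+2}+\cdots$ suggest the natural way to peel. Process the rows from the bottom up, and treat the entries of rows $i+1,\dots,\ell$ as a single block interleaved with row $i$.

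First, I would establish the structural fact behind this. By the Richardson condition, when an entry is placed in row $r\ge 2$, the current maximum of row $r-1$ exceeds everything already in rows $\ge r$. Reading entries in order, the entries of rows $\ge i+1$ therefore form a subset of positions, and the entries of row $i$ interleave with them freely, subject to one constraint. That constraint is that each entry entering row $i+1$ is immediately dominated by the last entry of row $i$, which forces a pattern of a Dyck/ballot type. The main combinatorial step is a bijection: a Richardson tableau of shape $\lambda$ corresponds to a pair consisting of
\begin{itemize}
\item a Richardson tableau on rows $i+1,\dots,\ell$ (standardized), and
\item a shuffle word in two letters, with $\lambda_i+\lambda_{i+2}+\cdots+\lambda_\ell$ letters, of which $\lambda_{i+1}+\cdots+\lambda_\ell$ are marked.
\end{itemize}
I would try to define this word via the Motzkin-path picture of Theorem~\ref{thm.KP25-Moz}. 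Up steps are openers, down steps are closers, and levels correspond to rows. The row-$(i+1)$ entries are paired with row-$i$ entries, which absorbs one block of letters; this explains why $\lambda_{i+1}$ is missing from the top entry of the binomial.

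Second, I would track $\mathsf{maj}$ through the bijection. A descent $j$ occurs exactly when $r_{j+1}(T)>r_j(T)$. Using the classical fact that $\sum_w q^{\mathrm{inv}(w)}$ over two-letter words gives the $q$-binomial, I want the descents contributed by the interleaving to equal an inversion count of the shuffle word plus a fixed shift. The shifts should add up to $\sum_{2\le i\le j\le\ell}\lambda_i\lambda_j$. As a check, for two rows that sum is $\lambda_2^2$, and the minimal tableau (row $1$ then row $2$ alternating suitably) should realize it.

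The main obstacle is making the descent contribution local. A descent depends on adjacent entries, so inserting the block for row $i$ changes adjacencies in the already-built tableau. To handle this, I would first test the recursion on small cases such as $\lambda=(2,1)$ and $(2,2)$. I would then choose the insertion, as a cyclic-shift or MacMahon-style argument, so that each inserted letter raises $\mathsf{maj}$ by its position-dependent amount. If the direct approach fails, the fallback is to verify that both sides satisfy the same $q$-Pascal recursion, obtained by conditioning on the row containing $n$. In a Richardson tableau, $n$ must lie in a row whose removal keeps the tableau Richardson, and removing it changes $\mathsf{maj}$ by $0$ or $n-1$. The task is then to check that the product formula satisfies the corresponding identity using $\begin{bmatrix}a\\b\end{bmatrix}_q=\begin{bmatrix}a-1\\b\end{bmatrix}_q+q^{a-b}\begin{bmatrix}a-1\\b-1\end{bmatrix}_q$.
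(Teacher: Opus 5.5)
There is a genuine gap: the $q$-part of the argument is never supplied. You name tracking $\mathsf{maj}$ through the interleaving as ``the main obstacle'' and leave it there. The MacMahon-style insertion is not carried out, and the fallback does not close as stated. Conditioning on the row of $n$ gives no recursion among the shape generating functions $F(\mu)=\sum_{T\in\mathcal{R}(\mu)}q^{\mathsf{maj}(T)}$ alone. The reason is that whether $n$ may be added to row $r$, and whether this creates a descent, depends on where $n-1$ sits, not only on the shape. For $\lambda=(2,2)$, the entry $4$ can be added to the tableau with rows $\{1,3\},\{2\}$ but not to the one with rows $\{1,2\},\{3\}$. So $F(2,2)=q^4$ is only part of $q^3F(2,1)=q^4+q^5$, and a refined statistic would be needed.

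The set-up is also inconsistent. Your displayed recursion peels the bottom row, with factor $\begin{bmatrix}\lambda_{\ell-1}\\ \lambda_\ell\end{bmatrix}_q$ and an unspecified $\lambda'$. The bijection you then describe peels the top row instead. Its factor is $\begin{bmatrix}\lambda_1+\lambda_3+\cdots+\lambda_\ell\\ \lambda_2+\cdots+\lambda_\ell\end{bmatrix}_q$ and its smaller shape is $(\lambda_2,\dots,\lambda_\ell)$. The constraint is also sharper than a ballot condition. If $j$ lies in row $2$, let $k$ be the largest row-$1$ entry below $j$. The Richardson condition rules out entries strictly between $k$ and $j$ in rows $\ge 2$, and maximality rules them out in row $1$, so $k=j-1$. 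Every row-$2$ entry is therefore immediately preceded in value by a row-$1$ entry; your pairing intuition is right. The conditions for rows $\ge 3$ only see rows $\ge 2$, so they are unaffected by where the row-$1$ entries go.

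The missing idea that makes your top-row peeling work is the following. The same argument shows that if $j+1$ lies in row $r\ge 2$, then $j$ lies in a row $\le r-1$. Hence $\mathsf{Des}(T)=\{j : j+1\notin\text{row }1\}$ and $\mathsf{maj}(T)=\sum_{x\notin\text{row }1}(x-1)$. Put $N'=\lambda_2+\cdots+\lambda_\ell$ and $m=\lambda_1-\lambda_2$. Encode $T$ by a pair consisting of $T'\in\mathcal{R}(\lambda_2,\dots,\lambda_\ell)$ and a multiset $0\le g_1\le\cdots\le g_m\le N'$. To build $T$, glue a row-$1$ entry in front of each row-$2$ entry, and drop the remaining $m$ row-$1$ entries into the gaps after the $g_i$-th entry of $T'$. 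If $x_k$ is the $k$th smallest entry outside row $1$, then
$x_k=k+\#\{k'\le k : k'\in\text{row }1\text{ of }T'\}+\#\{i : g_i<k\}$. Summing gives
\[
\mathsf{maj}(T)=\mathsf{maj}(T')+\lambda_2N'+\sum_{i=1}^{m}(N'-g_i),
\]
so $F(\lambda)=q^{\lambda_2N'}\begin{bmatrix}N'+m\\ m\end{bmatrix}_qF(\lambda_2,\dots,\lambda_\ell)$. This iterates to the theorem, because $\sum_{i\ge 1}\lambda_{i+1}(\lambda_{i+1}+\cdots+\lambda_\ell)=\sum_{2\le i\le j\le\ell}\lambda_i\lambda_j$.

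This route would be considerably shorter than the paper's. The paper passes to Motzkin paths through Guo's bijection, computes shapes with the shape algorithm, and peels columns of height $r$ by the local bijections $\psi^{(r)}$. There the marking rule is what guarantees that sliding an inserted unit path past each neighbouring piece changes $\mathsf{comaj}$ by exactly one. Without that observation or the descent-set characterization above, your descent contributions are not shown to be local, and the $q$-formula remains unproved.
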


In their work, the proof of Theorem \ref{thm.KP25-Moz} is algebraic, showing that these two numbers have the same generating function. On the other hand, the proof of Theorem \ref{thm.KP25-q} is more complicated and based on induction on the size of Richardson tableaux. In \cite[Problem 4.5]{KP25}, the authors posed the problem of finding an explicit bijection between Richardson tableaux of size $n$ and Motzkin paths of length $n$.

Later, Guo \cite{Guo25} answered the above problem by providing such a bijection. Specifically, he proved that, under the celebrated Robinson--Schensted algorithm, the insertion tableaux of \textit{noncrossing involutions} of $[n]:=\{1,2,\dots,n\}$ coincide with Richardson tableaux of size $n$. We summarize the result in the following theorem. 
\begin{theorem}[{\cite[Theorem 1.1]{Guo25}}]\label{thm.Guo}
    The set of insertion tableaux of noncrossing involutions of $[n]$ is in bijection with the set of Richardson tableaux of size $n$ via the Robinson--Schensted algorithm.
\end{theorem}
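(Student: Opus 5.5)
The plan has three steps. First show that insertion is injective on noncrossing involutions. Then show that every insertion tableau of a noncrossing involution is Richardson. Finally, get surjectivity by counting.

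For injectivity, recall that an involution $\sigma$ has $P(\sigma)=Q(\sigma)$ under Robinson--Schensted. So $\sigma\mapsto P(\sigma)$ is already injective on all involutions, and in particular on noncrossing ones.

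For the counting step, note that noncrossing involutions of $[n]$ are counted by the $n$th Motzkin number. A Motzkin path corresponds to such an involution by reading an up step at $i$ as the opener of an arc, the matching down step at $j$ as its closer, and a level step as a fixed point. By Theorem~\ref{thm.KP25-Moz}, Richardson tableaux of size $n$ are also counted by the $n$th Motzkin number. Hence it suffices to prove that $P(\sigma)$ is Richardson for every noncrossing involution $\sigma$. Injectivity and equal cardinalities then give the bijection.

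For the containment step I would argue by induction on $n$, using the recursive structure of noncrossing involutions. I would use the symmetry $P(\sigma)=Q(\sigma)$ and think of $Q$: the entry $j$ lies in row $r_j$, the row in which the insertion of $\sigma(j)$ terminates. The tableau $Q_{<j}$ is exactly the shape growth up to step $j$. The Richardson condition at $j$ then reads as follows: when the new box is added in row $r\ge2$, the last box added to row $r-1$ was created later than every box present in rows $\ge r$. To verify this, I would decompose $\sigma$ by its first element. Either $1$ is fixed, in which case $\sigma=1\oplus\sigma'$. Or $1$ is matched with some $k$, in which case $\sigma$ is $1$ paired with $k$, with a noncrossing involution $\alpha$ on $\{2,\dots,k-1\}$ and a noncrossing involution $\beta$ on $\{k+1,\dots,n\}$. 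Inserting the word $\sigma(1)\sigma(2)\cdots$ means: insert $k$, then insert the letters of $\alpha$, all smaller than $k$, then insert $1$, then insert $\beta$, all larger than everything so far.

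The letters of $\beta$ are larger than everything inserted before them, so they interact with the earlier tableau only through the bumping chains they start among themselves. Since $\beta$ uses values above all earlier letters, its entries sit in the first rows, to the right of the earlier tableau. The key is to track the values of the rightmost entries in each row.

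The key intermediate lemma I would aim for concerns a noncrossing involution with a completed prefix. It should say that the recording tableau restricted to any initial segment $\{1,\dots,m\}$ has its row-end times decreasing down the rows in the Richardson sense. Equivalently, the last time row $i$ grew is greater than every time any row $i'>i$ grew. I would prove this by tracking what happens when a letter in an arc $(a,b)$ with $a<b$ is inserted. At step $a$ the letter $\sigma(a)=b$ is large and is appended to row $1$. At step $b$ the letter $\sigma(b)=a$ is smaller than everything inserted since $a$ that is nested inside, by noncrossingness. So its bumping path should push down along a chain that ends in a row just below the rows touched since time $a$. This is what makes the row just above receive a later time than the rows below.

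The main obstacle is making this bumping-path analysis precise. Specifically, one must show that the insertion of a closer $a$ terminates in row $r$ and that row $r-1$ was modified at a time later than all times occurring in rows $\ge r$. I would first try to establish an invariant that explicitly describes the current insertion tableau of a prefix of a noncrossing involution, in terms of the currently open arcs and the nested structure, and then verify that the invariant is preserved under each of the three step types.
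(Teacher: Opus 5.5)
Your first two steps are sound. Because $P=Q$ for involutions, $\sigma\mapsto P(\sigma)$ is injective. Theorem \ref{thm.KP25-Moz} is proved independently (algebraically) in \cite{KP25}, so equal cardinalities reduce the theorem to showing that $P(\sigma)$ is Richardson for every noncrossing $\sigma$. Your translation of the Richardson condition into recording times is also correct. (The paper does not reprove this statement; it quotes it from \cite{Guo25}.)

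\textbf{The gap.} That containment is the whole content of the theorem, and you leave it as ``the main obstacle''. Moreover, the two concrete claims you offer toward it are false.

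Your key intermediate lemma says that in every prefix, the last growth time of row $i$ exceeds every growth time of rows $i'>i$. This already fails for $\sigma=21$: $Q$ is the column $1,2$, so row $1$ last grew at time $1$ and row $2$ at time $2$. It also fails in Figure \ref{fig.Richardsonexample}, where row $1$ ends in $17$ while row $3$ contains $18$. The Richardson condition only compares row $r-1$ with rows $\geq r$ at the moment a box enters row $r$. It is not a monotonicity of row ends, so the invariant you plan to maintain is false.

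Likewise, ``at step $a$ the letter $\sigma(a)=b$ is appended to row $1$'' fails for nested arcs. For $\sigma=4321$, the letter $3$ inserted at step $2$ bumps $4$.

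\textbf{The missing idea.} Use your first-element decomposition structurally rather than through bumping paths. Write $\sigma=1\oplus\sigma'$ or $\sigma=\widehat{\alpha}\oplus\beta$, where $\widehat{\alpha}$ is $\alpha$ enclosed by the arc $(1,k)$. You then need three facts.

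(i) $P(\pi\oplus\rho)=P(\pi)\circ P(\rho)$. This is your remark about $\beta$ made precise; it is \cite[Lemma 3.3]{Guo25}, recalled in Section \ref{sec.preprime}.

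(ii) $P(\widehat{\alpha})$ is obtained from $P(\alpha)$ by moving its first column down one row and putting $1$ on top and $k$ at the bottom of that column. To see this, insert $k$ and then the letters of $\alpha$. The letter $k$ just sits one row below the current first column: whenever a bump would open a new row, it pushes $k$ down instead. The final letter $1$ then bumps the entire first column down one step. This is the rule used in the proof of Lemma \ref{lem.shapeinvariant}.

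(iii) Both operations preserve the Richardson property.
\begin{itemize}
\item In $T\circ U$, when an entry of the $U$-part enters row $r\geq 2$, row $r-1$ already contains a $U$-entry (the one above it in $U$). That entry exceeds all of $T$, and the remaining comparisons are the Richardson condition of $U$.
\item In the wrapped tableau, $1$, $k$ and the displaced first-column entries have nothing smaller in rows weakly below them, so their conditions hold vacuously.
\item For $x$ in row $r\geq2$ and column $s\geq2$ of $P(\alpha)$, the largest entry of row $r-1$ that is smaller than $x$ lies in a column $\geq s\geq 2$. So it stays in row $r-1$. It exceeds the displaced first-column entry of row $r-1$, and by the Richardson condition of $P(\alpha)$ at $x$ it exceeds every earlier entry of rows $\geq r$.
\end{itemize}

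Induction on $n$ then gives $P(\sigma)\in\mathcal{R}(n)$, and your injectivity-plus-counting argument finishes the proof.
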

Moreover, these involutions can be naturally identified with Motzkin paths of length $n$. However, his bijection cannot give a combinatorial explanation of Theorem \ref{thm.KP25-q} (see \cite[Remark 1.8]{Guo25}). In this paper, our first result gives such a combinatorial explanation (Section \ref{sec.pfq-analogue}). In fact, we introduce a \textit{shape algorithm} (Section \ref{sec.shapealgo}), where the input is a Motzkin path $p$, and the output is the partition $\lambda$ that is exactly the shape of the corresponding Richardson tableau of $p$ under the Robinson--Schensted algorithm. Our shape algorithm consists of iterated simple modifications of $p$ and does not rely on the Robinson--Schensted algorithm. Furthermore, we construct a map (Section \ref{sec.localbij}) of inserting certain Motzkin paths into a Motzkin path and its inverse; this allows us to systematically construct the set of Motzkin paths which are in bijection with the set of Richardson tableaux of a given shape $\lambda$, and hence, prove Theorem \ref{thm.KP25-q} combinatorially.

We also obtain a counterpart result of the $q$-enumeration formula for $\mathcal{R}(\lambda)$ by the comajor statistic. 
\begin{corollary}\label{cor.qcomajor}
    Let $\lambda=(\lambda_1,\dots,\lambda_\ell)$ be a partition. Then
    \begin{equation}\label{eq.qcomajor}
        \sum_{T \in \mathcal{R}(\lambda)} q^{\mathsf{comaj}(T)} = q^{\frac{1}{2}\sum_{i=1}^{\ell} \lambda_i (\lambda_i-1)} \prod_{i=1}^{\ell-1}  \begin{bmatrix}
            \lambda_i + \lambda_{i+2} + \lambda_{i+3} + \cdots + \lambda_\ell \\
            \lambda_{i+1} + \lambda_{i+2} + \lambda_{i+3} + \cdots + \lambda_\ell
        \end{bmatrix}_q.
    \end{equation}
\end{corollary}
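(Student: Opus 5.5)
The plan is to derive the corollary from Theorem \ref{thm.KP25-q} through the relation between $\mathsf{maj}$ and $\mathsf{comaj}$, followed by the symmetry of $q$-binomial coefficients. The conventions I will use are that the descent set of an SYT $T$ of size $n$ is $D(T)=\{j : j+1 \text{ lies in a strictly lower row than } j\}$, that $\mathsf{maj}(T)=\sum_{j\in D(T)} j$, and that $\mathsf{comaj}(T)=\sum_{j\in D(T)}(n-j)$. For each fixed shape $\lambda$, the number of descents is not constant over $\mathcal{R}(\lambda)$, so the two statistics are not related by a simple shift. The natural route is therefore to show that the generating polynomial is palindromic (reversed) under $q\mapsto q^{-1}$ after suitable normalization.

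\textbf{Step 1.} I will first show that $\sum_{T\in\mathcal R(\lambda)} q^{\mathsf{maj}(T)}$ has a symmetry. Each factor $\begin{bmatrix} a\\ b\end{bmatrix}_q$ is symmetric of degree $b(a-b)$, so the right-hand side of \eqref{eq.Rq-analogue} is symmetric about its center. Its minimal degree is $m=\sum_{2\le i\le j\le \ell}\lambda_i\lambda_j$, and its maximal degree is $M=m+\sum_{i=1}^{\ell-1}(\lambda_{i+1}+\cdots+\lambda_\ell)(\lambda_i-\lambda_{i+1})$. Writing $F(q)=\sum_T q^{\mathsf{maj}(T)}$, this gives $F(q)=q^{m+M}F(q^{-1})$.

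\textbf{Step 2.} Next I will relate the two statistics. Summing over descents gives $\mathsf{comaj}(T)=n\cdot\mathrm{des}(T)-\mathsf{maj}(T)$, but $\mathrm{des}$ varies, so this identity alone is insufficient. I would instead use the standard identity for SYT, $\mathsf{comaj}(T)=\mathsf{maj}(T^{*})$, where $T^{*}$ is the evacuation (Schützenberger involution), under which $D(T^{*})=\{n-j: j\in D(T)\}$. The obstacle is that evacuation need not preserve the Richardson property. So rather than rely on evacuation, I would use the paper's Motzkin path model. The shape algorithm and the local bijection produce a recursive decomposition of $\mathcal R(\lambda)$ that matches the factorization in \eqref{eq.Rq-analogue}. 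In that decomposition, the same local bijection should track descents from the right end. Concretely, reversing a Motzkin path, or equivalently conjugating the noncrossing involution by $i\mapsto n+1-i$, should preserve the shape under the shape algorithm and send $\mathsf{maj}$ to $\mathsf{comaj}$ up to the correct shift.

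\textbf{Step 3.} The corollary then follows. Once the model gives $\sum q^{\mathsf{comaj}}=q^{c}\prod\begin{bmatrix}\cdot\\ \cdot\end{bmatrix}_q$, I will fix $c$ by evaluating the minimum. The minimum comajor is attained by the tableau whose descents are as late as possible, namely the one filling each row consecutively from bottom to top. Each row of length $\lambda_i$ then contributes internal non-descent runs, and this yields $c=\sum_i\binom{\lambda_i}{2}$, the claimed exponent.

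\textbf{Main obstacle.} I expect the hardest part to be Step 2: establishing that the reversal operation on Motzkin paths (the inverse-conjugated involution) preserves the shape output of the shape algorithm, and that it exchanges descents $j$ with $n-j$ in the Richardson tableau. If this fails, the fallback is to redo the paper's local-bijection argument for Theorem \ref{thm.KP25-q} with $\mathsf{comaj}$ in place of $\mathsf{maj}$. In that argument, each inserted path segment would contribute a $q$-binomial factor counted from the right end.
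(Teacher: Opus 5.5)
\textbf{There is a genuine gap, and it starts with your conventions.} In this paper $\mathsf{comaj}(T)=\sum_{i\in\mathsf{Asc}(T)} i$ is the sum of the \emph{ascents} $i\in[n-1]$. It is not $\sum_{j\in D(T)}(n-j)$.

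With your statistic the corollary is false. For $\lambda=(1^n)$ with $n\ge 2$, the unique Richardson tableau has $D(T)=[n-1]$, so your statistic equals $\binom n2$, while the right-hand side of \eqref{eq.qcomajor} is $1$.

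Step 2 therefore cannot work as intended. Reversing the Motzkin path amounts to conjugating the involution by $w_0$. By Sch\"utzenberger, $P(w_0ww_0)$ is the evacuation of $P(w)$, and mirroring an arc diagram preserves noncrossingness. So this map does preserve $\mathcal R(\lambda)$, and your worry about evacuation is unfounded. But what it proves is that $\sum_{j\in D(T)}(n-j)$ is equidistributed with $\mathsf{maj}$ on $\mathcal R(\lambda)$. That generating function has lowest exponent $\sum_{2\le i\le j\le \ell}\lambda_i\lambda_j$, not $\frac12\sum_i\lambda_i(\lambda_i-1)$.

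Step 3 is not usable either. Filling the rows consecutively from the bottom row up violates column strictness whenever $\ell\ge 2$, so the result is not a standard Young tableau, let alone a Richardson one. Incidentally, your premise that $\mathrm{des}$ varies on $\mathcal R(\lambda)$ is also false. Counting weak valleys through the local bijection (the base path has $\lambda_\ell-1$, and each of the $\lambda_1-\lambda_\ell$ inserted unit paths adds exactly one) shows that every $T\in\mathcal R(\lambda)$ has exactly $\lambda_1-1$ ascents.

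\textbf{The missing idea is the complementation identity.} Under the paper's definitions, every $i\in[n-1]$ is exactly one of an ascent or a descent, so $\mathsf{maj}(T)+\mathsf{comaj}(T)=\binom n2$ for every SYT of size $n$; this is the identity used in the proof of Lemma~\ref{lem.statlocalbij}. With it, your Step 1 finishes the proof at once. Write $F(q)=\sum_{T\in\mathcal R(\lambda)}q^{\mathsf{maj}(T)}=q^{m}P(q)$, where $P$ is the palindromic product of degree $M-m$. Then
\[
\sum_{T\in\mathcal R(\lambda)}q^{\mathsf{comaj}(T)}=q^{\binom n2}F(q^{-1})=q^{\binom n2-M}P(q).
\]
Telescoping gives $\sum_{i}(\lambda_i-\lambda_{i+1})(\lambda_{i+1}+\cdots+\lambda_\ell)=\sum_{i<j}\lambda_i\lambda_j-m$, hence $M=\sum_{i<j}\lambda_i\lambda_j$. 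Therefore $\binom n2-M=\sum_i\binom{\lambda_i}{2}$, as claimed.

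This route is legitimate and shorter than the paper's, but it uses Theorem~\ref{thm.KP25-q} as a black box. The paper instead reruns the local-bijection bookkeeping on $\mathcal M(\lambda)$, using the comajor formula \eqref{eq.bijcomajor} of Lemma~\ref{lem.statlocalbij} together with the involution $\alpha\mapsto\alpha'$ on $\mathcal S(n,m)$, which satisfies $\mathsf{add}(\alpha')=nm-\mathsf{add}(\alpha)$. Your stated fallback is essentially that proof, but it is sound only once the definition of $\mathsf{comaj}$ is corrected.
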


Guo considered a refinement of Richardson tableaux based on the number of odd columns, that is, columns containing an odd number of boxes. Let $\mathcal{R}(n,k)$ denote the subset of $\mathcal{R}(n)$, which consists of Richardson tableaux of size $n$ with $k$ odd columns, where $n \equiv k \pmod{2}$. Our second result proves Guo's conjecture (\cite[Conjecture 1.7]{Guo25}) on the generating function of $\mathcal{R}(n,k)$ by the comajor statistic, which is stated below.
\begin{theorem}\label{thm.Guoconj}
    For $0 \leq k \leq n$ with $n \equiv k \pmod{2}$,
    \begin{equation}\label{eq.Guoconj}
    \sum_{T \in \mathcal{R}(n,k)} q^{\mathsf{comaj}(T)} = q^{\binom{k}{2}} \begin{bmatrix}
        n \\ k
    \end{bmatrix}_q
    C_{\frac{n-k}{2}}(q),
\end{equation}
where $C_m(q) = \frac{1}{[m+1]_q} \begin{bmatrix}
    2m \\ m
\end{bmatrix}_q$ denotes the $m$th $q$-Catalan number.
\end{theorem}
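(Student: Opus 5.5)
We would prove Theorem~\ref{thm.Guoconj} by summing Corollary~\ref{cor.qcomajor} over all shapes $\lambda\vdash n$ with exactly $k$ odd columns, and then matching the result with the right-hand side of \eqref{eq.Guoconj} through a recursion in $n$. The first step is to pass to the conjugate partition $\mu=\lambda^t$. The condition on odd columns then says that exactly $k$ parts of $\mu$ are odd. Since the statistic $\frac12\sum_i\lambda_i(\lambda_i-1)$ equals $\sum_j (j-1)\mu_j = n(\mu)$, it is natural to rewrite both the prefactor and the product of $q$-binomials of \eqref{eq.qcomajor} in terms of $\mu$. Write $L_i=\lambda_i$ and $S_i=\lambda_{i+1}+\cdots+\lambda_\ell$. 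The product $\prod_i \begin{bmatrix} \lambda_i+S_{i+1}\\ S_i\end{bmatrix}_q$ telescopes partially, because consecutive factors share the tails $S_{i+1}$. We plan to use this to turn the sum over $\lambda$ into an iterated sum that peels off one row at a time.

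A cleaner alternative, which we would try first, goes through the Motzkin path model. Under Guo's bijection (Theorem~\ref{thm.Guo}), the Richardson tableau attached to a noncrossing involution with $m$ arcs and $n-2m$ fixed points has $k=n-2m$ odd columns. This is the number of fixed points, by the standard fact that the number of odd columns of the RS shape of an involution equals its number of fixed points (Schützenberger). So $\mathcal{R}(n,k)$ corresponds to Motzkin paths of length $n$ with $k$ level steps and $m=(n-k)/2$ up/down pairs.

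The generating function $q^{\binom k2}\begin{bmatrix} n\\k\end{bmatrix}_q C_m(q)$ has an obvious combinatorial reading. Choose the positions of the $k$ level steps, which is a $k$-subset of $[n]$ weighted by a $q$-binomial shifted by $q^{\binom k2}$. Then fill the remaining $2m$ positions with a Dyck path weighted by the $q$-Catalan number $C_m(q)$, which is the comajor-type generating function of Dyck paths, or equivalently of SYT of shape $(m,m)$. So the task reduces to showing that $\mathsf{comaj}$ of the tableau decomposes as a ``level-step positions'' part plus a ``Dyck path'' part. We would try to prove this by tracking, via the shape algorithm and the local bijection of Section~\ref{sec.localbij}, how descents of $T$ arise from the Motzkin path. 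Descents $j$ of $T$, where $j+1$ lies in a lower row, should correspond to specific local patterns in $p$.

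We expect this identification of $\mathsf{comaj}(T)$ with a statistic on $p$ to be the main obstacle. The descent set of an RS insertion tableau is not locally visible in general. If it resists, we fall back to the first, algebraic route. We would establish a recursion for $F_{n,k}(q)=\sum_{T\in\mathcal{R}(n,k)}q^{\mathsf{comaj}(T)}$ by deleting the first column of $\lambda$, or equivalently the first part of $\mu$. This uses the factorized form of \eqref{eq.qcomajor}: removing the first column changes each $\lambda_i$ by one and shifts $k$ by the parity of $\ell$. We would then check that the target $q^{\binom k2}\begin{bmatrix} n\\k\end{bmatrix}_qC_m(q)$ satisfies the same recursion, using the $q$-Pascal rule and $q$-Vandermonde, with initial cases $n\le 2$ verified by hand.
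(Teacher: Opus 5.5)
Your reduction of $\mathcal{R}(n,k)$ to $\mathcal{M}(n,k)$ is the same as the paper's: Theorem~\ref{thm.Guo} plus Sch\"utzenberger's correspondence between fixed points and odd columns. But the step you call the main obstacle is routine, and the step that actually carries the theorem has no argument. For an involution $\pi$ one has $P(\pi)=Q(\pi)=T$, so $\mathsf{Des}(T)=\mathsf{Des}(\pi)$. Lemma~\ref{lem.weakvally} then identifies $\mathsf{Asc}(\pi)$ with the weak valleys of $p$. Thus $\mathsf{comaj}(T)=\mathsf{comaj}(p)$ is a purely local statistic, and the theorem is equivalent to $\sum_{p\in\mathcal{M}(n,k)}q^{\mathsf{comaj}(p)}=q^{\binom k2}\begin{bmatrix} n\\ k\end{bmatrix}_q C_{(n-k)/2}(q)$.

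The real gap is here. Sending $p$ to the positions of its $H$ steps together with the Dyck path $\tau$ left after deleting them is a bijection, but $\mathsf{comaj}$ does not split along it. For instance, an $H$ in position $3$ adds $0$ to $\mathsf{comaj}(UUDD)$, giving $UUHDD$, but adds $3$ to $\mathsf{comaj}(UDUD)$, giving $UDHUD$. You need an argument that, for each fixed $\tau$, inserting $k$ level steps contributes exactly $q^{\binom k2}\begin{bmatrix} n\\ k\end{bmatrix}_q$, and you give no mechanism for this. The paper supplies two:
\begin{itemize}
\item A $\tau$-dependent relabelling of the insertion slots (Algorithm~A with the vectors $F_i$), under which the $i$th inserted $H$ raises $\mathsf{comaj}$ by exactly $a_i$. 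This gives $\mathsf{comaj}(\omega)=\mathsf{add}(\alpha)+\mathsf{comaj}(\tau)$ with $\alpha$ a $k$-subset of $\{0,\dots,n-1\}$.
\item Labelling the $U$, $H$, $D$ steps by $1,\dots,a$, by $a+b,\dots,a+1$, and by $a+b+1,\dots,a+b+c$, so that weak valleys become descents. Stanley's shuffle theorem is then applied to all of $\mathcal{G}(a,b,c)$, and the paths going below the axis are removed by a reflection that lowers $\mathsf{comaj}$ by exactly one.
\end{itemize}
The shape algorithm and $\psi^{(r)}$ cannot play this role. They work inside a fixed shape and insert whole unit Motzkin paths only at unmarked positions (cf.\ Remark~\ref{rmk.difference}), so using them just leads back to summing over shapes.

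Your fallback does not close either. Summing Corollary~\ref{cor.qcomajor} over all $\lambda$ with $k$ odd columns is legitimate. However, a recursion obtained by deleting the first column would need the per-shape factor to change by a multiplier depending only on $n$ and $\ell$, and it does not. For a two-row shape $(a,b)$ the factor is $q^{\binom a2+\binom b2}\begin{bmatrix} a\\ b\end{bmatrix}_q$. Deleting the first column divides it by $q^{a+b-2}[a]_q/[b]_q$, which depends on $a$ and $b$ separately; for $(a,b)=(3,2)$ it is not even a polynomial. Moreover, the shapes produced this way are only those of $n-\ell$ with at most $\ell$ rows, whereas $F_{n-\ell,k'}$ runs over all shapes. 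So there is no recursion for $F_{n,k}$ in terms of the $F_{n-\ell,k'}$ of the form you describe, and you never write one down. Making this route work would require a refined family of sums and a new $q$-series identity, which is exactly the missing content.
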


We provide two proofs of Theorem \ref{thm.Guoconj}. The first one considers generalized Motzkin paths and identifies them with specific permutations. This identification transforms the comajor statistic into the standard major statistics on these permutations, allowing us to complete the proof by applying Stanley's shuffle theorem. In the second one, we construct a bijection that generates $\mathcal{M}(n,k)$ by inserting horizontal steps into Dyck paths while tracking the comajor statistic.

The rest of this paper is organized as follows. Section~\ref{sec.pre} provides the necessary background. In Section \ref{sec.shapeRichardson}, we investigate the set of Motzkin paths corresponding to the set of Richardson tableaux of shape $\lambda$ and give a combinatorial proof of Theorem \ref{thm.KP25-q} and Corollary \ref{cor.qcomajor}. In Section \ref{sec.Guoconj}, we present two proofs of Theorem \ref{thm.Guoconj}. Finally, Section \ref{sec.remarks} contains concluding remarks and further discussion.

\section{Preliminaries}\label{sec.pre}

We will use the following notations. For any positive integer $n$, define $[n]_q:= \frac{1-q^n}{1-q}$ and $[n]_q!:=[1]_q[2]_q \cdots [n]_q$ with $[0]_q:=0$ and $[0]_q!:=1$. The $q$-binomial coefficient $\begin{bmatrix}
    n \\ k
\end{bmatrix} _q$ is defined by $\frac{[n]_q!}{[k]_q! [n-k]_q!}$ for $n \geq k \geq 0$; otherwise, it is $0$. The $q$-multinomial coefficient $\begin{bmatrix}
    n \\ n_1,n_2,\dots,n_\ell
\end{bmatrix}_q$ is defined to be $\frac{[n]_q!}{[n_1]_q! [n_2]_q! \cdots [n_\ell]_q!}$ for nonnegative integers $n_i$'s whose sum equals $n$; otherwise, it is $0$. 

In this section, we introduce the combinatorial objects (Section \ref{sec.preinvmoz}) and their associated statistics (Section \ref{sec.prestat}) that will be discussed in this paper. We also recall the celebrated Robinson--Schensted algorithm (Section \ref{sec.preRS}) and the notion of concatenation and prime decompositions of these objects (Section \ref{sec.preprime}).

\subsection{Noncrossing involutions and Motzkin paths}\label{sec.preinvmoz}

Consider the symmetric group $\mathfrak{S}_n$. We say $\pi \in \mathfrak{S}_n$ is an \textit{involution} of length $n$ if $\pi^{-1} = \pi$. Expressing in cycle notation, an involution $\pi$ consists of $2$-cycles or singletons. It is natural to visualize an involution $\pi \in \mathfrak{S}_n$ by an \textit{arc diagram}: list $n$ dots labeled by $1,2,\dots,n$ from left to right, and draw an arc between $i$ and $j$ if $i,j$ belong to a $2$-cycle. In this case, we write $(i,j), i<j$ for an arc connecting $i$ and $j$. Two arcs $(a,b)$ and $(c,d)$ form a \textit{crossing} if $a<c<b<d$. An involution is \textit{noncrossing} if its arc diagram has no crossings. Let $\mathcal{I}(n)$ denote the set of noncrossing involutions of $[n]$ and $\mathcal{I}(n,k)$ denote the subset of $\mathcal{I}(n)$ containing those with $k$ singletons. Note that noncrossing involutions are also called \textit{noncrossing partial matchings} in \cite{Guo25}.

We take the following two involutions as an example:
\begin{align*}
    \pi_1 & = (1,7)(2,3)(4,6)(5)(8,20)(9)(10,14)(11,13)(12)(15,19)(16)(17,18),\\
    \pi_2 &= (1,11)(2,3)(4,20)(5,8)(6,14)(7)(9)(10,13)(12)(15,19)(16)(17,18).
\end{align*}
Figure \ref{fig.noncrossing} shows the arc diagram of $\pi_1$, which is a noncrossing involution with $4$ singletons, that is, $\pi_1 \in \mathcal{I}(20,4)$. Figure \ref{fig.involution} shows the arc diagram of 
$\pi_2$; in this case, some arcs cross.
\begin{figure}[hbt!]
    \centering
    \subfigure[]{\label{fig.noncrossing}
        \begin{tikzpicture}[scale=0.5]
            \foreach \i in {1,...,20}{
                \node[circle,fill=black,inner sep=0.5pt] (P\i) at (\i,0) {};
                \node[below] at (P\i) {\i};
            }
            \draw (P1) to[bend left=45] (P7);
            \draw (P2) to[bend left=45] (P3);
            \draw (P4) to[bend left=45] (P6);
            \draw (P8) to[bend left=45] (P20);
            \draw (P10) to[bend left=45] (P14);
            \draw (P11) to[bend left=45] (P13);
            \draw (P15) to[bend left=45] (P19);
            \draw (P17) to[bend left=45] (P18);
        \end{tikzpicture}
        }
        \subfigure[]{\label{fig.involution}
        \begin{tikzpicture}[scale=0.5]
            \foreach \i in {1,...,20}{
                \node[circle,fill=black,inner sep=0.5pt] (P\i) at (\i,0) {};
                \node[below] at (P\i) {\i};
            }
            \draw (P1) to[bend left=45] (P11);
            \draw (P2) to[bend left=45] (P3);
            \draw (P4) to[bend left=45] (P20);
            \draw (P5) to[bend left=45] (P8);
            \draw (P6) to[bend left=45] (P14);
            \draw (P10) to[bend left=45] (P13);
            \draw (P15) to[bend left=45] (P19);
            \draw (P17) to[bend left=45] (P18);
        \end{tikzpicture}
        }
    \subfigure[]{\label{fig.Motzkin}
    \begin{tikzpicture}[scale=0.6]
        \draw[thick, black]
            (0,0) -- (1,1)
                  -- (2,2) 
                  -- (3,1)
                  -- (4,2)
                  -- (5,2)
                  -- (6,1)
                  -- (7,0)
                  -- (8,1)
                  -- (9,1)
                  -- (10,2)
                  -- (11,3)
                  -- (12,3)
                  -- (13,2)
                  -- (14,1)
                  -- (15,2)
                  -- (16,2)
                  -- (17,3)
                  -- (18,2)
                  -- (19,1)
                  -- (20,0);
        \foreach \x/\y in {0/0, 1/1, 2/2, 3/1, 4/2, 5/2, 6/1, 7/0, 8/1, 9/1, 10/2, 11/3, 12/3, 13/2, 14/1, 15/2, 16/2, 17/3, 18/2, 19/1, 20/0}{
            \filldraw[black] (\x,\y) circle (2pt);
        }
    \end{tikzpicture}   
    }
    \caption{(a) The arc diagram of the noncrossing involution $\pi_1$ of length $20$. (b) The arc diagram of the involution $\pi_2$ of length $20$. (c) The Motzkin path of length $20$ corresponding to the arc diagram in Figure \ref{fig.noncrossing}.}
\end{figure}

A \textit{Motzkin path} of length $n$ is a lattice path starting from $(0,0)$ and ending at $(n,0)$, which stays weakly above the $x$-axis, with steps $(1,1)$, $(1,0)$, and $(1,-1)$. For brevity, we denote these three steps by $U,H$, and $D$, respectively. Let $\mathcal{M}(n)$ be the set of Motzkin paths of length $n$ and $\mathcal{M}(n,k)$ be the subset of $\mathcal{M}(n)$ containing paths with exactly $k$ $H$ steps. 

There is a natural bijection between $\mathcal{I}(n,k)$ and $\mathcal{M}(n,k)$. To see this, given a noncrossing involution $\pi \in \mathcal{I}(n)$, the corresponding Motzkin path is constructed as follows. For $1 \leq i \leq n$, the $i$th step of the path is a $U$ (resp., $D$) step if $i$ is a left (resp., right) endpoint of an arc, and is an $H$ step if $i$ is a singleton. See Figure \ref{fig.Motzkin} for an illustration.

\subsection{The Robinson--Schensted algorithm}\label{sec.preRS}

The classical Robinson--Schensted (RS) algorithm sends a permutation $\pi$ of length $n$ to a pair of standard Young tableaux $(P, Q)$ of size $n$ with the same shape. Here, $P$ (resp., $Q$) is called the \textit{insertion} (resp., \textit{recording}) tableau of $\pi$. We will only use this classical version in the paper; see, for instance, \cite[Chapter 3]{Sagan01} for its generalizations.   

Given a permutation $\pi=\pi_1\pi_2\cdots \pi_n$, to describe the RS algorithm, we construct a sequence of tableaux pairs $\{ (P_i,Q_i) | i=0,1,\dots,n\}$, where $(P_0,Q_0)=(\emptyset, \emptyset)$ is the pair of empty tableaux and $(P_n,Q_n)=(P,Q)$ is the pair of resulting tableaux. $P_i$ is obtained by inserting $\pi_i$ into $P_{i-1}$ and the entry $i$ is placed in $Q_i$ so that $P_i$ and $Q_i$ have the same shape.

We describe the operation of inserting $\pi_i$ into $P_{i-1}$ below. Set $R$ to be the first row of $P_{i-1}$ and $x=\pi_i$. 
\begin{itemize}
    \item While $x$ is smaller than some entries of $R$, do
    \begin{itemize}
        \item Find the smallest entry $\ell$ greater than $x$, and replace $\ell$ by $x$ in $R$. We denote this operation by $R \leftarrow x$.
        \item Set $R$ to be the next row of $P_{i-1}$ and $x=\ell$. 
    \end{itemize}
    \item If $x$ is greater than all entries of $R$ or $R$ is empty, append $x$ to the end of this row and stop. 
\end{itemize}

Sch{\"u}tzenberger \cite{Schutz63} proved that $\pi$ is an involution if and only if $P=Q$ under the RS algorithm. In other words, the RS algorithm gives a bijection between the set of involutions of $[n]$ and the set of SYTs of size $n$. Sch{\"u}tzenberger  \cite{Schutz77} also showed that the number of fixed points of an involution is the same as the number of odd columns in the corresponding insertion tableau. 

\begin{example}\label{ex.RS}
    Let $\pi=413265$ be a permutation of length $6$. We list the tableau pairs $(P_i,Q_i)$ of applying the RS algorithm to $\pi$ below. The insertion tableau of $\pi$ is given by $P_6$ while the recording tableau of $\pi$ is given by $Q_6$. 
    \begin{align*}
        (\emptyset,\emptyset) & \rightarrow \left(~ \ytableausetup{centertableaux} \begin{ytableau} 4 \end{ytableau}, \ytableausetup{centertableaux} \begin{ytableau} 1 \end{ytableau}~\right) 
        \rightarrow \left(~
        \ytableausetup{centertableaux} \begin{ytableau} 1 \\ 4 \end{ytableau},\ytableausetup{centertableaux} \begin{ytableau} 1 \\ 2 \end{ytableau} ~\right)
        \rightarrow \left(~
        \ytableausetup{centertableaux} \begin{ytableau} 1 & 3 \\ 4 \end{ytableau}, \ytableausetup{centertableaux} \begin{ytableau} 1 & 3 \\ 2 \end{ytableau} ~\right)
        \rightarrow \left(~
        \ytableausetup{centertableaux} \begin{ytableau} 1 & 2 \\ 3 \\ 4 \end{ytableau}, \ytableausetup{centertableaux} \begin{ytableau} 1 & 3 \\ 2 \\ 4 \end{ytableau} ~\right) \\
        & \rightarrow \left(~
        \ytableausetup{centertableaux} \begin{ytableau} 1 & 2 & 6 \\ 3 \\ 4 \end{ytableau}, \ytableausetup{centertableaux} \begin{ytableau} 1 & 3 & 5 \\ 2 \\ 4 \end{ytableau} ~\right)
        \rightarrow \left(~
        \ytableausetup{centertableaux} \begin{ytableau} 1 & 2 & 5 \\ 3 & 6 \\ 4 \end{ytableau}, \ytableausetup{centertableaux} \begin{ytableau} 1 & 3 & 5 \\ 2 & 6 \\ 4 \end{ytableau} ~\right).
    \end{align*}
    We encourage the reader to check that applying the RS algorithm to $\pi_1$ in Figure \ref{fig.noncrossing} gives the standard Young tableau shown in Figure \ref{fig.Richardsonexample}. Theorem \ref{thm.Guo} asserts that this is a Richardson tableau because $\pi_1$ is a noncrossing involution. Similarly, applying the RS algorithm to $\pi_2$ in Figure \ref{fig.involution} gives the standard Young tableau shown in Figure \ref{fig.Richardsonnonexample}, but it is not a Richardson tableau. 
\end{example}

We would like to point out that when $\pi=n\cdots321$ is the reverse of the identity permutation (it is an involution), the RS algorithm sends $\pi$ to the SYT with only one column containing $1,2,\dots,n$ from top to bottom.

\subsection{Statistics on permutations, Motzkin paths, and standard Young tableaux}\label{sec.prestat}

Let $\pi=\pi_1\pi_2 \cdots \pi_n$ be a permutation of length $n$. We say $i \in [n-1]$ is a \textit{descent} (resp., \textit{ascent}) of $\pi$ if $\pi_i > \pi_{i+1}$ (resp., $\pi_i < \pi_{i+1}$). We denote by $\mathsf{Des}(\pi)$ (resp., $\mathsf{Asc}(\pi)$) the descent set (resp., ascent set) of $\pi$, which contains all descents (resp., ascents) of $\pi$. The \textit{major} statistic is defined as the sum of elements in the descent set, $\mathsf{maj}(\pi) = \sum_{i \in \mathsf{Des}(\pi)} i$, while the \textit{comajor} statistic is defined to be the sum of elements in the ascent set, $\mathsf{comaj}(\pi) = \sum_{i \in \mathsf{Asc}(\pi)} i$.

For a standard Young tableau $T$ of size $n$, we say $i \in [n-1]$ is a \textit{descent} of $T$ if $i+1$ appears in a row strictly below $i$ in $T$. The number $i \in [n-1]$ is an \textit{ascent} of $T$ if $i$ is not a descent of $T$. We write $\mathsf{Des}(T)$ (resp., $\mathsf{Asc}(T)$) for the descent set (resp., ascent set) of $T$, which contains all descents (resp., ascents) of $T$. Similarly, the \textit{major} and \textit{comajor} statistics of $T$ are given by $\mathsf{maj}(T) = \sum_{i \in \mathsf{Des}(T)} i$ and $\mathsf{comaj}(T) = \sum_{i \in \mathsf{Asc}(T)} i$, respectively.

We turn our attention to noncrossing involutions, or equivalently, Motzkin paths (Section \ref{sec.preinvmoz}). Let $p \in \mathcal{M}(n)$ be a Motzkin path of length $n$. We may express $p$ as a sequence of letters $U, D$, and $H$. We say $i \in [n-1]$ is a \textit{weak valley} of $p$ if the positions $i$ and $i+1$ in the sequence are of the form $HH$, $DU$, $DH$, or $HU$. Dually, we say $i \in [n-1]$ is a \textit{weak peak} of $p$ if the positions $i$ and $i+1$ in the sequence are of the form $UD$, $UU$, $DD$, $UH$, or $HD$. 
For convenience, we write $\mathsf{Val}(p)$ and $\mathsf{Pk}(p)$ for the set of weak valleys and weak peaks of $p$, respectively.

Barnabei, Bonetti, Castronuovo, and Silimbani \cite[Corollary 3.3]{BBCS19} gave a characterization of the ascent set of a noncrossing involution\footnote{Noncrossing involutions are equivalent to $3412$-avoiding permutations in the work of \cite{BBCS19}.} in terms of specific patterns of Motzkin paths, which is stated below.
\begin{lemma}[{\cite{BBCS19}}]\label{lem.weakvally}
    The index $i$ is an ascent of a noncrossing involution if and only if $i$ is a weak valley of the corresponding Motzkin path. 
\end{lemma}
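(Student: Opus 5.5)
Write $\pi\in\mathcal{I}(n)$ for the noncrossing involution and $p$ for its Motzkin path. By definition $i$ is an ascent of $\pi$ if and only if $\pi(i)<\pi(i+1)$. So the plan is to read off $\pi(i)$ and $\pi(i+1)$ from the letters $p_i$ and $p_{i+1}$, and to use noncrossingness to settle each of the nine possible two-letter patterns. The nine patterns split into the five weak peaks and the four weak valleys. I will show that every weak valley gives an ascent and every weak peak gives a descent; since each $i\in[n-1]$ falls into exactly one of the two classes, this proves the lemma.

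The basic facts are as follows. If $p_i=U$, then $\pi(i)>i$; if $p_i=D$, then $\pi(i)<i$; if $p_i=H$, then $\pi(i)=i$. Moreover, a step $i$ with $p_i=H$ cannot lie strictly under an arc in a way that lets an arc cross it, because $i$ is a fixed point. The cases go as follows.

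\emph{Weak valleys.} For $HH$ we have $\pi(i)=i<i+1=\pi(i+1)$. For $DU$ we have $\pi(i)<i<i+1<\pi(i+1)$. For $DH$ we have $\pi(i)<i<i+1=\pi(i+1)$. For $HU$ we have $\pi(i)=i<i+1<\pi(i+1)$. So all four give ascents, and none of them needs noncrossingness.

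\emph{Weak peaks.} For $UH$ we have $\pi(i)>i$. In fact $\pi(i)>i+1$, since $i+1$ is fixed, so $\pi(i)>\pi(i+1)$. For $HD$ we have $\pi(i+1)<i+1$, and $\pi(i+1)\neq i$ since $i$ is fixed, so $\pi(i+1)<i=\pi(i)$. For $UD$ the pair forms the arc $(i,i+1)$: the step $i+1$ is a right endpoint, its partner is some $j\le i$, and if $j<i$ then the arcs $(j,i+1)$ and $(i,\pi(i))$ with $\pi(i)>i+1$ would cross. Hence $\pi(i)=i+1>i=\pi(i+1)$. For $UU$, the arcs $(i,\pi(i))$ and $(i+1,\pi(i+1))$ cannot cross, so $i<i+1<\pi(i+1)<\pi(i)$, giving a descent. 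For $DD$, the arcs $(\pi(i),i)$ and $(\pi(i+1),i+1)$ must nest, so $\pi(i+1)<\pi(i)<i<i+1$, again a descent.

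The only step where the noncrossing hypothesis is genuinely used is the nesting argument in the $UU$, $DD$, and $UD$ cases. This is also the main point to check carefully: the statement fails for crossing involutions. I also need to confirm that the correspondence of Section \ref{sec.preinvmoz} sends left endpoints to $U$ and right endpoints to $D$, so that the sign facts above hold. Once these are in place, the case analysis is routine.
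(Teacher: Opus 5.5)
Your proof is correct. The paper gives no proof of this lemma; it imports it from \cite{BBCS19} (Corollary~3.3 there, phrased in terms of $3412$-avoidance). Your nine-case check is therefore a self-contained, elementary substitute for that citation rather than a reconstruction of an argument in the paper. It also makes visible where the hypothesis enters:
\begin{itemize}
\item the four weak-valley patterns are ascents for any involution;
\item $UH$, $HD$ and $UD$ are descents for any involution;
\item noncrossingness is needed only for $UU$ and $DD$, where the two arcs are forced to nest.
\end{itemize}

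A few points in the write-up should be tightened.
\begin{itemize}
\item The $UD$ case does not need noncrossingness. The signs alone give $\pi(i)\ge i+1>i\ge\pi(i+1)$. Your nesting argument there is valid, but your closing remark that the hypothesis is ``genuinely used'' in the $UD$ case is inaccurate.
\item The sentence about an $H$ step not lying ``strictly under an arc in a way that lets an arc cross it'' is not meaningful as stated, and it is not what you use. What the $UH$ and $HD$ cases actually rely on is that a fixed point cannot be paired with its neighbor. So $\pi(i)\neq i+1$ when $i+1$ is fixed, and $\pi(i+1)\neq i$ when $i$ is fixed.
\item In the $UU$ and $DD$ cases, state explicitly that $\pi(i)\neq i+1$, equivalently $\pi(i+1)\neq i$, since such a pairing would give the pattern $UD$. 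Then the two arcs have four distinct endpoints, and the crossing/nesting dichotomy (with strict inequalities $a<c<b<d$) applies.
\end{itemize}

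With these adjustments the argument is complete. The final step is sound: the nine patterns split into five weak peaks and four weak valleys, and ascents and descents are complementary for a permutation. Together these give the ``if and only if''.
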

We can easily deduce from Lemma \ref{lem.weakvally} that the index $i$ is a descent of a noncrossing involution if and only if $i$ is a weak peak of the corresponding Motzkin path. From this point of view, we may define the major and comajor statistics on the set of Motzkin paths. Given $p \in \mathcal{M}(n)$, the major of $p$ is defined as $\mathsf{maj}(p) = \sum_{i \in \mathsf{Pk}(p)}i$ and the comajor of $p$ is defined to be $\mathsf{comaj}(p) = \sum_{i \in \mathsf{Val}(p)}i$.

It is known (see, for example, \cite[Lemma 7.23.1]{StanEC2}) that the index $i$ is a descent (resp., ascent) of $\pi$ if and only if $i$ is a descent (resp., ascent) of $T$, where $T$ is the insertion tableau of $\pi$ under the RS algorithm. So, $\mathsf{maj}(\pi) = \mathsf{maj}(T)$, and $\mathsf{comaj}(\pi) = \mathsf{comaj}(T)$. Now, if we restrict to noncrossing involutions and Richardson tableaux, then by Theorem \ref{thm.Guo}, the index $i$ is a descent (resp., ascent) of a noncrossing involution if and only if $i$ is a descent (resp., ascent) of the corresponding Richardson tableau under the RS algorithm.

For example, we consider the noncrossing involution $\pi_1$ given in Section \ref{sec.preinvmoz} (its arc diagram is shown in Figure \ref{fig.noncrossing}), the corresponding Motzkin path $p_1$ (Figure \ref{fig.Motzkin}), and the insertion tableau $T_1$ (Figure \ref{fig.Richardsonexample}) of $\pi_1$ under the RS algorithm. Following the above discussion, one can check that the ascent set of $\pi_1$, the set of weak valleys of $p_1$, and the ascent set of $T_1$ are identical:
\begin{equation*}
    \mathsf{Asc}(\pi_1) = \mathsf{Val}(p_1) = \mathsf{Asc}(T_1) = \{3,7,9,14,16\}.
\end{equation*}
Thus, the comajor statistic $\mathsf{comaj}(\pi_1)= \mathsf{comaj}(p_1)=\mathsf{comaj}(T_1) = 49$.

\subsection{Concatenation and prime decompositions}\label{sec.preprime}

The notion of concatenation and prime decomposition of Richardson tableaux (\cite[Section 3.3]{KP25}) has a natural geometric meaning on the irreducible components of Springer fibers as shown by Fresse \cite{Fresse11}. Since the set of Richardson tableaux is in bijection with the set of noncrossing involutions (Theorem \ref{thm.Guo}), we can extend the definition of concatenation and prime decomposition to noncrossing involutions, or equivalently, Motzkin paths (\cite[Section 3]{Guo25}). We now introduce them below. 

Let $\pi \in \mathfrak{S}_m$ and $\sigma \in \mathfrak{S}_n$ be two permutations. The \textit{direct sum} $\pi \oplus \sigma$ of $\pi$ and $\sigma$ is the permutation in $\mathfrak{S}_{m+n}$ given by
\begin{equation*}
    \pi_1,\pi_2,\dots, \pi_m, \sigma_1+m,\sigma_2+m,\dots,\sigma_n+m. 
\end{equation*}
In other words, we concatenate $\pi$ and $\sigma$ and then increase each entry of $\sigma$ by $m$. For instance, if $\pi=321$ and $\sigma=632541$, then $\pi \oplus \sigma = 321965874$. A permutation is called \textit{prime} if it cannot be written as a direct sum of non-empty shorter permutations. As explained in \cite[Remark 3.1]{Guo25}, a noncrossing involution $w \in \mathcal{I}(n)$ is prime if and only if there is an arc $(1,n)$ in the arc diagram of $w$. Also, each noncrossing involution is a direct sum of prime noncrossing involutions.

Let $p$ and $q$ be two Motzkin paths of lengths $m$ and $n$, respectively. Similarly, the direct sum $p \oplus q $ of $p$ and $q$ is the Motzkin path of length $m+n$ defined by concatenating the ending point of $p$ with the starting point of $q$. A \textit{block} of a Motzkin path is one of the following two types:
\begin{enumerate}
    \item a single horizontal step on the $x$-axis;
    \item a sequence of steps that starts with an up step from the $x$-axis and terminates with the first down step that returns to the $x$-axis afterward.
\end{enumerate}
Sometimes the $x$-axis is called the \textit{base line} of the block. A Motzkin path is called \textit{prime} if it consists of a single block.

Let $T$ and $U$ be two standard Young tableaux of size $m$ and $n$, respectively. The \textit{concatenation} $T \circ U$ of $T$ and $U$ is the standard Young tableau of size $m+n$ given by increasing each entry of $U$ by $m$, and then concatenating the corresponding rows of $T$ and $U$. See Figure \ref{fig.concatenation} for an example. A standard Young tableau is called \textit{prime} if it cannot be written as a concatenation of non-empty smaller standard Young tableaux. Guo \cite[Lemma 3.3]{Guo25} observed that if $T$ and $U$ are the insertion tableaux of involutions $\pi$ and $\sigma$, respectively, then the insertion tableau of $\pi \oplus \sigma$ is equal to $T \circ U$. 
\begin{figure}[H]
    \centering
    \begin{minipage}{0.15\textwidth}
        \centering
        \begin{tikzpicture}[scale=0.5]
            \matrix (m) [matrix of nodes,
                         ampersand replacement=\&,
                         nodes={draw, minimum size=0.8cm, anchor=center},
                         column sep=-\pgflinewidth, row sep=-\pgflinewidth]{
                1 \\
                2 \\
                3 \\
            };
        \end{tikzpicture}
    \end{minipage}
    $\circ$
    \hspace{3mm}
    \begin{minipage}{0.15\textwidth}
        \centering
        \begin{tikzpicture}[scale=0.5]
            \matrix (m) [matrix of nodes,
                         ampersand replacement=\&,
                         nodes={draw, minimum size=0.8cm, anchor=center},
                         column sep=-\pgflinewidth, row sep=-\pgflinewidth]{
                1 \& 4 \\
                2 \& 5 \\
                3 \\
                6 \\
            };
        \end{tikzpicture}
    \end{minipage}
    $=$
    \begin{minipage}{0.2\textwidth}
        \centering
        \begin{tikzpicture}[scale=0.5]
            \matrix (m) [matrix of nodes,
                         ampersand replacement=\&,
                         nodes={draw, minimum size=0.8cm, anchor=center},
                         column sep=-\pgflinewidth, row sep=-\pgflinewidth]{
                |[fill=yellow!30]|1 \& 4 \& 7 \\
                |[fill=yellow!30]|2 \& 5 \& 8 \\
                |[fill=yellow!30]|3 \& 6\\
                9 \\
            };
        \end{tikzpicture}
    \end{minipage}
    \caption{The concatenation of two standard Young tableaux.}
    \label{fig.concatenation}
\end{figure}

\section{The shape of Richardson tableaux}\label{sec.shapeRichardson}

We study the shapes of Richardson tableaux via Motzkin paths. In Section~\ref{sec.shapealgo}, we introduce a shape algorithm for Motzkin paths that gives the shape of the corresponding Richardson tableau. In Section~\ref{sec.localbij}, we construct a bijection to generate all Motzkin paths whose associated Richardson tableaux have a given shape. We prove Theorem \ref{thm.KP25-q} and Corollary \ref{cor.qcomajor} in Section \ref{sec.pfq-analogue}.

\subsection{The shape of Motzkin paths}\label{sec.shapealgo}

Note that the permutation $\sigma=(n,n-1,\dots,1)\in\mathfrak{S}_n$ is the unique noncrossing involution that is associated with a Richardson tableau consisting of a single column. We consider the corresponding Motzkin path.

\begin{definition} {\rm
    For $n\geq 1$, let $m=\lfloor n/2\rfloor$. A path $p\in\mathcal{M}(n)$ is called a \textit{unit Motzkin path} if 
    \begin{equation*}
        p=\begin{cases}
            U^mD^m & \mbox{if $n$ even;}\\
            U^mHD^m & \mbox{if $n$ odd.}
            \end{cases}
    \end{equation*}
Notice that for $n=1$, $p$ is a single horizontal step on the $x$-axis. A unit Motzkin path of length $n$ is denoted by $u^{(n)}$. For convenience, a maximal sequence of steps of the form $U^kD^k$ (resp., $U^kHD^k$) for some $k\geq 0$ in a Motzkin path is called a \textit{unital segment} of even (resp., odd) length of the path.
}
\end{definition}

We first define some terminology on Motzkin paths. Let $p \in \mathcal{M}(n)$ be a Motzkin path of length $n$. We write $y(p)$ for the $y$-coordinate of the highest lattice point of $p$. The \textit{height} of $p$ is defined to be
\begin{equation}
    \mathsf{ht}(p) = \begin{cases}
        y(p)+\frac{1}{2}, & \text{if $p$ contains an $H$ step on the line $y=y(p)$,}\\
        y(p), & \text{otherwise.}
    \end{cases}
\end{equation}

Clearly, under the RS algorithm, a unit Motzkin path $p$ corresponds to a Richardson tableau of a single column of size $2 \mathsf{ht}(p)$. This motivates the following \textit{shape algorithm}. Using this algorithm, we shall establish a surjective map $\theta:\mathcal{M}(n)\rightarrow\{\lambda\mid\lambda\vdash n\}$ such that a Motzkin path $p\in\mathcal{M}(n)$ is carried to a partition $\lambda$ of $n$, which is exactly the shape of the Richardson tableau corresponding to $p$. Our algorithm consists of simple modifications of subpaths and does not rely on the RS algorithm. The shape algorithm is presented below.

\noindent \textbf{The shape algorithm.} 

Given a Motzkin path $p\in\mathcal{M}(n)$, set $\overline{p} = p$. The corresponding partition $\theta(p)$ of $n$ is constructed recursively as follows.
\begin{itemize}
    \item[(1)] Let $\overline{p}=p_1 \oplus p_2 \oplus \cdots \oplus p_s$ be the block decomposition of $\overline{p}$ for some $s\geq 1$. If all of $p_1, p_2, \dots, p_s$ are unit Motzkin paths, then go to (3); otherwise, go to (2).
    \item[(2)] Let $i$ be the least integer such that $p_i$ is not a unit Motzkin path. Let $m$ be the greatest integer such that the block $p_i$ can be decomposed as
    \begin{equation}\label{eq.primedecomp}
        p_i = U^m \oplus q_1 \oplus q_2 \oplus \cdots \oplus q_k \oplus D^m, 
    \end{equation}
    where each $q_j$ is a block with the base line $y=m$, for $1\leq j\leq k$. Let $t$ be the least integer such that $\mathsf{ht}(q_t)$ is one of the greatest numbers among $\mathsf{ht}(q_1), \mathsf{ht}(q_2), \dots, \mathsf{ht}(q_k)$. Then we set 
    \begin{equation}\label{eq.interchange}
        p_i^{\prime} = q_1 \oplus \cdots \oplus q_{t-1} \oplus U^m \oplus q_t \oplus D^m \oplus q_{t+1} \oplus \cdots \oplus q_k. 
    \end{equation}
    Update $\overline{p} = p_1 \oplus \cdots \oplus p_{i-1} \oplus p_i^{\prime} \oplus p_{i+1} \oplus \cdots \oplus p_s$ and go back to $(1)$. 
    \item[(3)] The resulting Motzkin path is denoted by $p_{\mathsf{min}} = \bar{p}$. For $1\leq i\leq s$, let $\mu_i$ be the length of the unit Motzkin path $p_i$. Then we set $\mu$ to be the partition of $n$ obtained by arranging $\mu_1,\mu_2,\dots,\mu_s$ in decreasing order, and set $\theta(p) = \mu^{t}$ to be the conjugate of $\mu$.
\end{itemize}

We remark that in \eqref{eq.primedecomp}, there are possibly multiple blocks $q_j$ reaching the maximum $y$-coordinate of $p_i$. Among them, we choose the leftmost block with a horizontal step at the top. If there is no such block, choose the leftmost block with a peak at the maximum $y$-coordinate. After a simple modification to the subpaths in \eqref{eq.interchange}, the height of the paths does not change: $\mathsf{ht}(p^{\prime}_i) = \mathsf{ht}(p_i)$. An example of the shape algorithm is illustrated below.

\begin{example}\label{ex.shape}
    Let $\overline{p}=p$ be the Motzkin path shown in Figure \ref{fig.Motzkin}. We demonstrate the whole process of applying the shape algorithm to $p$ as follows:
    \begin{itemize}
        \item Decompose $\bar{p}=p_1 \oplus p_2$. Since $p_1$ is not a unit Motzkin path, we decompose it as $p_1 = U \oplus q_{1} \oplus q_{2} \oplus D$ (see Figure \ref{fig.shapealgo1}). Notice that $q_{2}$ has the maximum height; the shape algorithm (2) yields the Motzkin path shown in Figure \ref{fig.shapealgo2}. Move back to $(1)$.
    \begin{figure}[H]
        \centering 
        \begin{tikzpicture}[scale=0.6]
            \draw[thick, black]
                (0,0) -- (1,1);
            \draw[thick, red]
                (1,1) -- (2,2) 
                      -- (3,1);
            \draw[thick, blue]
                (3,1) -- (4,2)
                      -- (5,2)
                      -- (6,1);
            \draw[thick, black]
                (6,1) -- (7,0)
                      -- (8,1)
                      -- (9,1)
                      -- (10,2)
                      -- (11,3)
                      -- (12,3)
                      -- (13,2)
                      -- (14,1)
                      -- (15,2)
                      -- (16,2)
                      -- (17,3)
                      -- (18,2)
                      -- (19,1)
                      -- (20,0);
            \foreach \x/\y in {0/0, 1/1, 2/2, 3/1, 4/2, 5/2, 6/1, 7/0, 8/1, 9/1, 10/2, 11/3, 12/3, 13/2, 14/1, 15/2, 16/2, 17/3, 18/2, 19/1, 20/0}{
                \filldraw[black] (\x,\y) circle (2pt);
            }
            \node[below] at (3.5, 0) {$p_1$};
            \node[below] at (13.5, 0) {$p_2$};
            \node[below] at (2, 1) {$q_{1}$};
            \node[below] at (4.5, 1) {$q_{2}$};
        \end{tikzpicture}
        \caption{The first block decomposition of $\bar{p}$.}
        \label{fig.shapealgo1}
    \end{figure}

    \item Decompose $\overline{p} = p_1 \oplus p_2 \oplus p_3$. Since $p_1$ and $p_2$ are unit Motzkin paths, we decompose $p_3 = U \oplus q_{1} \oplus q_{2} \oplus q_{3} \oplus D$ (see Figure \ref{fig.shapealgo2}). Notice that $q_{2}$ has the maximum height; the shape algorithm (2) leads to the Motzkin path shown in Figure \ref{fig.shapealgo3}. Move back to (1).  
    \begin{figure}[H]
        \centering
        \begin{tikzpicture}[scale=0.6]
            \draw[thick, black]
                (0,0) -- (1,1)
                      -- (2,0) 
                      -- (3,1)
                      -- (4,2)
                      -- (5,2)
                      -- (6,1)
                      -- (7,0)
                      -- (8,1);
            \draw[thick, red]
                (8,1) -- (9,1);
            \draw[thick, blue]
                (9,1) -- (10,2)
                      -- (11,3)
                      -- (12,3)
                      -- (13,2)
                      -- (14,1);
            \draw[thick, red]
                (14,1) -- (15,2)
                      -- (16,2)
                      -- (17,3)
                      -- (18,2)
                      -- (19,1);
            \draw[thick, black]
                (19,1) -- (20,0);
            \foreach \x/\y in {0/0, 1/1, 2/0, 3/1, 4/2, 5/2, 6/1, 7/0, 8/1, 9/1, 10/2, 11/3, 12/3, 13/2, 14/1, 15/2, 16/2, 17/3, 18/2, 19/1, 20/0}{
                \filldraw[black] (\x,\y) circle (2pt);
            }
            \node[below] at (13.5, 0) {$p_3$};
            \node[below] at (1, 0) {$p_{1}$};
            \node[below] at (4.5, 0) {$p_{2}$};
            \node[below] at (8.5, 1) {$q_{1}$};
            \node[below] at (11.5, 1) {$q_{2}$};
            \node[below] at (16.5, 1) {$q_{3}$};
        \end{tikzpicture}
        \caption{The result of the first iteration and the second block decomposition of $\bar{p}$.}
        \label{fig.shapealgo2}
    \end{figure}

    \item Decompose $\overline{p} = p_1 \oplus \cdots \oplus p_5$. Since $p_1,p_2,p_3,p_4$ are unit Motzkin paths, we decompose $p_5 = U \oplus q_{1} \oplus q_{2} \oplus D$ (see Figure \ref{fig.shapealgo3}). Notice that $q_{2}$ has the maximum height; the shape algorithm (2) leads to the Motzkin path shown in Figure \ref{fig.shapealgo4}. Move back to (1).
    \begin{figure}[H]
        \centering
        \begin{tikzpicture}[scale=0.6]
            \draw[thick, black]
                (0,0) -- (1,1)
                      -- (2,0) 
                      -- (3,1)
                      -- (4,2)
                      -- (5,2)
                      -- (6,1)
                      -- (7,0)
                      -- (8,0)
                      -- (9,1)
                      -- (10,2)
                      -- (11,3)
                      -- (12,3)
                      -- (13,2)
                      -- (14,1)
                      -- (15,0)
                      -- (16,1);
            \draw[thick, red]
                (16,1)-- (17,1);
            \draw[thick, blue]
                (17,1)-- (18,2)
                      -- (19,1);
            \draw[thick, black]
                (19,1)-- (20,0);
    
            \foreach \x/\y in {0/0, 1/1, 2/0, 3/1, 4/2, 5/2, 6/1, 7/0, 8/0, 9/1, 10/2, 11/3, 12/3, 13/2, 14/1, 15/0, 16/1, 17/1, 18/2, 19/1, 20/0}{
                \filldraw[black] (\x,\y) circle (2pt);
            }
            
            \node[below] at (1, 0) {$p_{1}$};
            \node[below] at (4.5, 0) {$p_{2}$};
            \node[below] at (7.5, 0) {$p_{3}$};
            \node[below] at (11.5, 0) {$p_{4}$};
            \node[below] at (17.5, 0) {$p_{5}$};
            \node[below] at (16.5, 1) {$q_{1}$};
            \node[below] at (18, 1) {$q_{2}$};
        \end{tikzpicture}
        \caption{The result of the second iteration and the third block decomposition of $\bar{p}$.}
        \label{fig.shapealgo3}
    \end{figure}

    \item Decompose $\overline{p} = p_1 \oplus \cdots \oplus p_6$. Now, each $p_i$ is a unit Motzkin path; we move to $(3)$. Reordering the length of the $p_i$'s in decreasing order, we obtain $\mu=(7,5,4,2,1,1)$ and $\theta(p)=(6,4,3,3,2,1,1)$. As mentioned in Example \ref{ex.RS}, the Motzkin path $p$ corresponds to the Richardson tableau in Figure \ref{fig.Richardsonexample} whose shape is exactly $\theta(p)$.
    \begin{figure}[H]
        \centering
        \begin{tikzpicture}[scale=0.6]
            \draw[thick, black]
                (0,0) -- (1,1)
                      -- (2,0) 
                      -- (3,1)
                      -- (4,2)
                      -- (5,2)
                      -- (6,1)
                      -- (7,0)
                      -- (8,0)
                      -- (9,1)
                      -- (10,2)
                      -- (11,3)
                      -- (12,3)
                      -- (13,2)
                      -- (14,1)
                      -- (15,0)
                      -- (16,0)
                      -- (17,1)
                      -- (18,2)
                      -- (19,1)
                      -- (20,0);
    
            \foreach \x/\y in {0/0, 1/1, 2/0, 3/1, 4/2, 5/2, 6/1, 7/0, 8/0, 9/1, 10/2, 11/3, 12/3, 13/2, 14/1, 15/0, 16/0, 17/1, 18/2, 19/1, 20/0}{
                \filldraw[black] (\x,\y) circle (2pt);
            }
            
            \node[below] at (1, 0) {$p_{1}$};
            \node[below] at (4.5, 0) {$p_{2}$};
            \node[below] at (7.5, 0) {$p_{3}$};
            \node[below] at (11.5, 0) {$p_{4}$};
            \node[below] at (15.5, 0) {$p_{5}$};
            \node[below] at (18, 0) {$p_{6}$};
        \end{tikzpicture}
    
        \caption{The result of the third iteration.}
        \label{fig.shapealgo4}
    \end{figure}
    \end{itemize}
\end{example}

We now explain why the shape algorithm terminates in finitely many steps. The key operation of the algorithm is the modification \eqref{eq.interchange} of subpaths $p_i$ to obtain $p_i^{\prime}$; one observes that this creates two kinds of subpaths: (a) the prime Motzkin path $U^m \oplus q_{t} \oplus D^m$ with the same height as $p_i$ but shorter length, and (b) prime Motzkin paths $q_{j}$, $j \neq t$ of shorter lengths and heights. Each iteration of (2) in the shape algorithm strictly decreases the length of the highest non-unit Motzkin paths until it reaches a unit Motzkin path. Since our Motzkin paths are of finite length, this algorithm will terminate in finitely many steps. Next, we show in the following lemma that the algorithm indeed gives the desired shape of Richardson tableaux. 

We first point out a well-known result of the RS algorithm (see \cite[Chapter 3.3]{Sagan01} and references therein) and an observation.

\begin{lemma}\label{lem.well-known}
    Given a noncrossing involution $\sigma$, let $p$ and $T$ be the corresponding Motzkin path and Richardson tableau of $\sigma$, respectively. If the length of the longest decreasing subsequence of $\sigma$ is $\ell$ then the following properties hold.
    \begin{enumerate}
        \item The Richardson tableau $T$ has $\ell$ rows.
        \item The height of the Motzkin path $p$ is $\ell/2$.
    \end{enumerate}
\end{lemma}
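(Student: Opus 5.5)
Part (1) is Schensted's theorem, and part (2) is a direct argument on arc diagrams.

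For (1), we invoke Schensted's theorem (see \cite[Chapter 3.3]{Sagan01}): for any permutation, the number of rows of its insertion tableau under the RS algorithm equals the length of its longest decreasing subsequence. By Theorem \ref{thm.Guo}, $T$ is the insertion tableau of $\sigma$, so $T$ has $\ell$ rows.

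For (2), we show that the longest decreasing subsequence of a noncrossing involution $\sigma$ has length exactly $2\,\mathsf{ht}(p)$. Write $y=y(p)$. Under the bijection of Section \ref{sec.preinvmoz}, the height of the path after step $i$ equals the number of arcs $(a,b)$ with $a\le i<b$.

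\emph{Lower bound.} At a lattice point of height $y$ there are $y$ nested arcs $(a_1,b_1),\dots,(a_y,b_y)$ with $a_1<\cdots<a_y<b_y<\cdots<b_1$. Reading $\sigma$ at the positions $a_1,\dots,a_y,b_y,\dots,b_1$ gives the values $b_1>\cdots>b_y>a_y>\cdots>a_1$. This is a decreasing subsequence of length $2y$. If $p$ has an $H$ step on the line $y=y(p)$, it is a fixed point $c$ with $a_y<c<b_y$ for a suitable choice of the nested chain. Inserting position $c$ (value $c$) between $a_y$ and $b_y$ extends the subsequence to length $2y+1=2\,\mathsf{ht}(p)$.

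\emph{Upper bound.} Take a decreasing subsequence at positions $i_1<\cdots<i_r$. Each position is a left endpoint, a right endpoint, or a fixed point of $\sigma$.
\begin{itemize}
\item \emph{Left endpoints.} Let $a<a'$ be left endpoints in the subsequence, so $\sigma(a)>\sigma(a')$. Since $\sigma$ is noncrossing, the arcs are either disjoint or nested. Disjoint would force $\sigma(a)<a'<\sigma(a')$, a contradiction. Hence they are nested, the left-endpoint arcs form a nested chain, and there are at most $y$ of them.
\item \emph{Right endpoints.} The symmetric argument applies, so there are at most $y$ right endpoints.
\item \emph{Fixed points.} Two fixed points $c<d$ satisfy $\sigma(c)<\sigma(d)$, so at most one fixed point $c$ occurs.
\end{itemize}

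This gives $r\le 2y$ when no fixed point occurs. When a fixed point $c$ occurs, consider a left endpoint $a$ of the subsequence.
\begin{itemize}
\item If $a>c$, the decreasing condition gives $\sigma(a)<c<a$, which contradicts $a$ being a left endpoint.
\item Hence $a<c<\sigma(a)$, so every left-endpoint arc in the subsequence contains $c$.
\end{itemize}
The symmetric argument shows the same for right-endpoint arcs. So $r\le 2d+1$, where $d$ is the number of arcs containing $c$, i.e.\ the height of the $H$ step at $c$. If $d=y$, then $\mathsf{ht}(p)=y+\tfrac12$ and $r\le 2\,\mathsf{ht}(p)$. If $d<y$, then $r\le 2y-1<2\,\mathsf{ht}(p)$.

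Combining the two bounds, $\ell=2\,\mathsf{ht}(p)$. The main subtlety is this fixed-point case, which is where the half-integer convention in the definition of $\mathsf{ht}$ enters.
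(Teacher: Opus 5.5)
Your proof is correct. For part (1) you do exactly what the paper does: you invoke Schensted's theorem for the RS insertion tableau.

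For part (2) the paper gives no argument at all; it only introduces the claim as ``an observation.'' So your two-sided bound is not a competing route but a complete justification of a step the paper leaves unverified. In the lower bound, the nested chain of arcs through a highest lattice point, together with a top-level $H$ step if one exists, gives a decreasing subsequence of length $2\,\mathsf{ht}(p)$. In the upper bound, the left endpoints and the right endpoints of any decreasing subsequence each form a nested chain. Moreover, at most one fixed point occurs, and when it does it lies under every one of those arcs.

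This is worth having written down, because the proof of Lemma \ref{lem.shapeinvariant} relies on $\mathsf{ht}(p)=\ell/2$. Two facts you use tacitly are immediate:
\begin{enumerate}
\item Arcs spanning a common gap are pairwise nested. This is the one place where noncrossingness is genuinely needed. In the upper bound, nesting of the left-endpoint arcs, and of the right-endpoint arcs, already follows from the decreasing condition alone.
\item A chain of $t$ nested arcs produces a lattice point of height at least $t$.
\end{enumerate}
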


\begin{lemma}\label{lem.shapeinvariant}
    Given a Motzkin path $p$, let $p_{\mathsf{min}}$ be the resulting Motzkin path of $p$ under the shape algorithm. Then the corresponding Richardson tableaux of $p$ and $p_{\mathsf{min}}$ have the same shape.
\end{lemma}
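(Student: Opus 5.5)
The plan is to work with column lengths, i.e. the conjugate shape, via Greene's theorem. For a permutation $w$ with insertion tableau of shape $\lambda$, the sum $\lambda^t_1+\cdots+\lambda^t_k$ of the $k$ longest columns equals $d_k(w)$. Here $d_k(w)$ is the maximum size of a union of $k$ disjoint decreasing subsequences of $w$. The proof then reduces to checking that one iteration of step (2) of the shape algorithm does not change the multiset of column lengths. Induction on the number of iterations then gives the lemma.

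\textbf{Step 1: direct sums.} First I reduce to a single block. By \cite[Lemma 3.3]{Guo25}, the tableau of $\pi\oplus\sigma$ is $T\circ U$, whose rows are the concatenations of the rows of $T$ and $U$. Hence the multiset of columns of $T\circ U$ is the union of the column multisets of $T$ and $U$. Since $\overline{p}$ and its update differ only in the block $p_i$, it suffices to show that $p_i$ and $p_i'$ give the same multiset of column lengths. By the same fact applied to \eqref{eq.interchange}, the columns of $p_i'$ are the union of the columns of $q_j$ for $j\neq t$ together with the columns of $U^m\oplus q_t\oplus D^m$. Here each $q_j$ is regarded as a Motzkin path with base line shifted to $0$.

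\textbf{Step 2: effect of wrapping by $U^m\cdots D^m$.} Let $\tau$ be the noncrossing involution of $r=q_1\oplus\cdots\oplus q_k$, of length $N$. The involution of $U^m\oplus r\oplus D^m$ is
\[ \sigma = (N+2m)(N+2m-1)\cdots(N+m+1)\,(\tau+m)\,m\cdots 21. \]
I claim $d_k(\sigma)=2m+d_k(\tau)$ for every $k\ge1$. For $\le$: the $2m$ outer entries are the only entries outside $\tau+m$, and restricting a union of $k$ disjoint decreasing subsequences of $\sigma$ to the middle gives such a union for $\tau$. For $\ge$: the $2m$ outer entries can all be appended to any single decreasing subsequence of $\tau+m$, since the prefix entries are larger than, and precede, everything in the middle, and the suffix entries are smaller than, and follow, everything. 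By Greene's theorem, the columns of $\sigma$ are therefore those of $\tau$ with the longest column lengthened by $2m$. This also recovers Lemma~\ref{lem.well-known} for the wrapped path.

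\textbf{Step 3: comparison.} By Step 1, the columns of $\tau$ are the union of the columns of the $\tau_j$ (the involutions of the $q_j$). Thus the columns of $p_i$ are: all columns of all $q_j$, with the single longest one increased by $2m$. Applying Step 2 to $U^m\oplus q_t\oplus D^m$ alone, the columns of $p_i'$ are: all columns of all $q_j$, with the longest column of $q_t$ increased by $2m$. These two multisets agree exactly when the longest column of $q_t$ is a longest column overall. By Lemma~\ref{lem.well-known}, the longest column of $q_j$ has length $2\,\mathsf{ht}(q_j)$ (the number of rows, i.e. the longest decreasing subsequence). The choice of $t$ in step (2) maximizes $\mathsf{ht}(q_t)$, so the multisets coincide; ties do not matter.

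The main obstacle is Step 2, specifically justifying Greene's theorem in the column form $d_k$ and verifying the equality $d_k(\sigma)=2m+d_k(\tau)$ cleanly. Everything else is bookkeeping with the concatenation lemma.
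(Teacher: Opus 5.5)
Your proof is correct, and its skeleton is the paper's. You reduce to the block $p_i$ via Guo's concatenation lemma (shapes add row-wise, equivalently column multisets merge). You show that wrapping by $U^m\cdots D^m$ lengthens only the longest column, by $2m$. You then use Lemma~\ref{lem.well-known} to see that the height-maximizing $q_t$ carries a globally longest column.

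The difference lies in how the wrapping step is justified. The paper argues directly at the level of insertion tableaux. The new maximal first letter and minimal last letter end up at the bottom and top of the first column, so the wrapped tableau is $T$ with every entry increased by one, its first column pushed down a row, and $1$ and $n+2$ filled in. This gives the tableau itself, not only its shape, and the paper also uses it in reverse to strip an outer arc.

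You instead use the dual form of Greene's theorem and verify $d_k(\sigma)=2m+d_k(\tau)$ for all $k$. This determines only the shape, but it is fully justified. It also visibly holds for an arbitrary inner path $q_1\oplus\cdots\oplus q_k$. The paper states its wrapping fact for a prime path and then applies it to this generally non-prime concatenation, which is true but left implicit. Your column-multiset formulation also makes the equality of \eqref{eq.modify1} and \eqref{eq.modify2} immediate, including why ties in the choice of $t$ are harmless.

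The price is importing Greene's theorem, where the paper needs only elementary properties of row insertion.
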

\begin{proof}  
    Consider a prime Motzkin path $p$ of length $n$. Let $T$ be the corresponding Richardson tableau of $p$, and let $\lambda$ be the shape of $T$. Since $p$ is prime, the arc $(1,n)$ is in the corresponding noncrossing involution of $p$. Thus, the entries $1$ and $n$ appear in the first column of $T$. Suppose $T$ contains $\ell$ rows. Then $\mathsf{ht}(p)=\ell/2$. The effect of appending a $U$ step at the beginning and a $D$ step at the end of $p$ is equivalent to adding the arc $(1,n+2)$ and increasing the original indices by $1$ in its corresponding noncrossing involution. Under the RS algorithm, the resulting Richardson tableau is obtained from $T$ by (1) increasing each entry by $1$, (2) shifting the entries in the first column $1$ unit downward, and (3) filling the top and bottom boxes in the first column with $1$ and $n+2$. So, the shape of the resulting tableau is $\lambda+(0^{\ell},1,1)$. 

    Conversely, if we remove the first $U$ step and the last $D$ step from $p$, then this is equivalent to deleting the $(1,n)$ arc and decreasing the original indices by $1$ in the corresponding noncrossing involution. Similarly, the resulting Richardson tableau is obtained from $T$ by (1) deleting the entries $1$ and $n$ (from the first column), (2) decreasing each entry by $1$, and (3) shifting the entries in the first column $1$ unit upward.

    Now, we show that the modification \eqref{eq.interchange} does not change the shape of the corresponding tableaux. Let $p=p_1\oplus \cdots \oplus p_s$ be the block decomposition of $p$. For some $i$, suppose the block $p_i = U^m \oplus q_1 \oplus q_2 \oplus \cdots \oplus q_k \oplus D^m$ for some $m\geq 1$ and each $q_j$ is a block with the base line $y=m$. Let $T_i$ be the corresponding Richardson tableau of $p_i$.  Suppose $T_i$ contains $\ell$ rows and the shape of the corresponding Richardson tableau of each $q_j$ is $\lambda^{(j)}$, for $1 \leq j \leq k$. Then $\mathsf{ht}(p_i) = \ell/2$. By the discussion in Section \ref{sec.preprime}, the shape of the Richardson tableau corresponding to $q_1 \oplus \cdots \oplus q_k$ is given by $\lambda^{(1)} + \cdots + \lambda^{(k)}$ (entrywise addition of partitions). By the above discussion, the shape of the Richardson tableau corresponding to $p_i$ is then given by
    \begin{equation}\label{eq.modify1}
        \lambda^{(1)} + \cdots + \lambda^{(k)} + (0^{\ell}, 1^{2m}).
    \end{equation}
    
    On the other hand, let $t$ be the smallest index such that for all $1 \leq j \leq k$, $\mathsf{ht}(q_j) \leq \mathsf{ht}(q_t)$. Then $\mathsf{ht}(U^m \oplus q_t \oplus D^m) = \mathsf{ht}(p_i)$. The shape of the corresponding Richardson tableau of $U^m \oplus q_t\oplus D^m$ is given by $\lambda^{(t)}+(0^{\ell},1^{2m})$. Thus, the shape of the corresponding Richardson tableau of the modification of $p_i$ can be expressed as 
    \begin{equation}\label{eq.modify2}
       \lambda^{(1)} + \cdots +\lambda^{(t-1)} +  \left( \lambda^{(t)} + (0^{\ell},1^{2m}) \right) + \lambda^{(t+1)} + \cdots +\lambda^{(k)}.
    \end{equation}

    Clearly, the partitions \eqref{eq.modify1} and \eqref{eq.modify2} are identical. Each iteration of the modification of subpaths, and hence, the shape algorithm does not change the shape of its corresponding Richardson tableau. This completes the proof of Lemma \ref{lem.shapeinvariant}.
\end{proof}

From this perspective, we can partition the set of Motzkin paths based on the output of the shape algorithm. We say a Motzkin path $p$ has \textit{shape} $\lambda$ if $\theta(p) = \lambda$. Let $\mathcal{M}(\lambda)$ denote the set of Motzkin paths of shape $\lambda$. Clearly, the set $\mathcal{M}(\lambda)$ is in bijection with the set $\mathcal{R}(\lambda)$ consisting of Richardson tableaux of shape $\lambda$. If a Motzkin path $p \in \mathcal{M}(\lambda)$, then the resulting Motzkin path under the shape algorithm (3) has the property that it consists of $\lambda_i$ unit Motzkin paths of length greater than or equal to $i$. Equivalently, it consists of exactly $\lambda_i - \lambda_{i+1}$ unit Motzkin paths of length $i$. 

One of the contributions of this paper is to provide a combinatorial proof of the $q$-enumeration formula of $\mathcal{R}(\lambda)$ (see Equation \eqref{eq.Rq-analogue}); our approach is based on analyzing those Motzkin paths of shape $\lambda$, which will be presented in the next two subsections.

\subsection{The local bijection}\label{sec.localbij}

We continue with the same notation as in Section \ref{sec.shapealgo}. Given a partition $\lambda$, the idea of constructing all the elements of $\mathcal{M}(\lambda)$ is an iterated operation of ``inserting'' the unit Motzkin paths of height $h$ into each Motzkin path obtained from those of height greater than $h$. We can insert such unit Motzkin paths only in specific positions, which can be encoded by the set of weakly increasing integer sequences. Now, we formally describe the operation mentioned above, which is called the \textit{local bijection}.

Let $p$ be a Motzkin path of shape $\lambda=(\lambda_1,\dots,\lambda_\ell)$ and $\mu = \lambda^{t} = (\mu_1,\dots,\mu_{\lambda_1})$, where $\mu_{\lambda_1}>r$ and $r$ is a positive integer. Under this setting, we would like to point out that $\lambda_1 = \lambda_2 = \cdots = \lambda_t$ for some $t = \mu_{\lambda_1} > r$. Recall from Section \ref{sec.shapealgo} that a unital segment in a Motzkin path is a maximal sequence of steps that form a unit Motzkin path. Note that the endpoints of a unital segment are not necessarily on the $x$-axis. Our goal is to insert $m$ unit Motzkin paths $u^{(r)}$ into a Motzkin path $p$ satisfying $\mu_{\lambda_1}>r$. Let $\lambda^{\prime}$ be the shape of the Motzkin path obtained from $p$ by inserting $m$ copies of $u^{(r)}$. Note that we may write $\lambda^{\prime} = (\lambda_1+m,\dots,\lambda_r+m,\lambda_{r+1},\dots,\lambda_\ell)$ and $\mu^{\prime} = (\lambda^{\prime})^{t} = (\mu_1,\dots,\mu_{\lambda_1},r^m)$. 

The local bijection is a map
\begin{equation}\label{eq.localbijection}
    \psi^{(r)}: \mathcal{S}(n,m) \times \mathcal{M}(\lambda) \to \mathcal{M}(\lambda^{\prime}),
\end{equation}
where $r$ is a positive integer, $\mathcal{S}(n,m) := \{(a_1,\dots, a_m) ~|~ 0 \leq a_1 \leq \cdots \leq a_m \leq n\}$ is the set of weakly increasing integer sequences with $n= \lambda_{r+1} + \cdots + \lambda_\ell$. The following steps describe $\psi^{(r)}$.
\begin{enumerate}
    \item For each unital segment in $p$, we mark a total of $r$ points, processed from top to bottom. If a tiebreaker is needed because two candidate points lie at the same height, we mark the left one. See Example~\ref{ex.localbij} for an illustration.
    \item In the path $p$, label all unmarked points by $0,1,\dots,n$ from right to left.
    \item Given $(a_1,\dots,a_m) \in \mathcal{S}(n,m)$, for each $i=1,\dots,m$, we insert a unit Motzkin path $u^{(r)}$ into the point labeled $a_i$ so that the right endpoint of $u^{(r)}$ is labeled by $a_i$.
\end{enumerate}
If $m=0$, then $\mathcal{S}(n,m)$ contains only the empty sequence, and the map $\psi^{(r)}$ is regarded as an identity map. 
\begin{theorem}\label{thm.localbijection}
    The map $\psi^{(r)}$ is a bijection.
\end{theorem}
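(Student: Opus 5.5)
We will show three things: $\psi^{(r)}$ is well defined, it is injective, and it is surjective. For surjectivity we construct an explicit inverse that removes copies of $u^{(r)}$.

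\textbf{Counting unmarked points.} First we check that after step (1) exactly $n+1$ points of $p$ remain unmarked, where $n=\lambda_{r+1}+\cdots+\lambda_\ell$. The path $p$ has $|\lambda|+1$ lattice points. By Lemma \ref{lem.shapeinvariant} and the construction of $p_{\mathsf{min}}$, the unital segments of $p$ correspond bijectively to the blocks of $p_{\mathsf{min}}$. Hence there are $\lambda_1$ unital segments, whose lengths are the parts $\mu_1,\dots,\mu_{\lambda_1}$ of the conjugate partition. One checks that a unital segment of length $\mu_j$ has $\mu_j+1$ points. Since every $\mu_j\ge \mu_{\lambda_1}>r$, marking $r$ points in each segment is always possible, and it marks $r\lambda_1=\lambda_1+\cdots+\lambda_r$ points in total. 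The count is consistent only if distinct segments do not share marked points; this holds because marking proceeds from the top and $r<\mu_j$.

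\textbf{Well-definedness (the main obstacle).} We must show that inserting $u^{(r)}$ at unmarked points produces a path of shape $\lambda'$. Here $\lambda'$ has the $\lambda_1$ old unital segments unchanged plus $m$ new unital segments of length $r$. The plan is to run the shape algorithm on $\psi^{(r)}(a,p)$ and compare it with the run on $p$. Since $r<\mu_{\lambda_1}$, an inserted $u^{(r)}$ has height strictly less than every unital segment of $p$. So whenever step (2) chooses the block $q_t$ of maximal height, the choice is the same as in $p$, provided the inserted copy does not split or lengthen a unital segment. The marking rule is designed to guarantee exactly this. Inserting at a point among the top $r$ points of a unital segment would either merge with that segment or make it fail to be a maximal $U^kD^k$ run. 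Inserting at an unmarked point, the inserted copy ends up, after the modifications \eqref{eq.interchange}, as a separate block $u^{(r)}$. This is argued by induction on the number of iterations of (2), tracking where the inserted copy sits relative to each $U^m\oplus q_t\oplus D^m$ rearrangement. The delicate case is the tie-breaking rule (marking the left of two points at equal height), together with the leftmost choice of $t$; these must be checked to match. We expect this case analysis to be the hardest part.

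\textbf{Inverse map and bijectivity.} Given $p'\in\mathcal{M}(\lambda')$, its shape records exactly $m$ more unital segments of length $r$ than $\lambda$. Among the unital segments of $p'$ of length $r$, these $m$ copies are identified canonically, for example as the ones reached by the shape algorithm with the lowest priority: the rightmost or lowest ones in the order induced by marking. Deleting them gives $p$, and we must verify that $p$ has shape $\lambda$, by the same comparison of shape-algorithm runs. The right endpoints of the deleted copies, read in the labelling of the unmarked points of $p$, give a multiset in $\{0,\dots,n\}$ and hence a sequence in $\mathcal{S}(n,m)$. Labelling from right to left, and placing several insertions at the same label side by side, makes this reading unambiguous. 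Composing the two constructions in both orders gives the identity, which proves that $\psi^{(r)}$ is a bijection.
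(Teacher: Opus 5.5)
Your outline follows the same plan as the paper's proof. You count the unmarked points, show that a copy inserted at an unmarked point survives as a separate $u^{(r)}$ in the output of the shape algorithm, and invert by deleting $m$ copies. However, both substantive steps are only announced, and the one structural fact you invoke is false.

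\textbf{Segment lengths.} The unital segments of $p$ do not have lengths $\mu_1,\dots,\mu_{\lambda_1}$, because step \eqref{eq.interchange} wraps $q_t$ in $U^m\cdots D^m$ and so lengthens the segment at its top. For instance, $p=UUDUDD$ has two segments $UD$ but $\mu=(4,2)$; in Example~\ref{ex.localbij} the segment lengths are $5,4,3,4$ while $\mu=(7,4,4,3)$. So you have not shown that each segment has more than $r$ steps. You need this for the marking to exist, for the count $r\lambda_1$, and for your claim that an inserted $u^{(r)}$ is lower than its competitors. A correct argument: a segment $s$ that is not a block of $p$ has a sibling on its base line, since otherwise it would not be maximal. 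When that node is processed, either $s$ is not chosen and ends as a component of length $\mathsf{len}(s)$, or it is chosen and a sibling of height at most $\mathsf{ht}(s)$ yields a component of length at most $2\,\mathsf{ht}(s)=\mathsf{len}(s)$. Hence $\mathsf{len}(s)\geq\mu_{\lambda_1}>r$.

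\textbf{Well-definedness.} This still needs the local check you postpone: the marked points are exactly those where the inserted copy would be wrapped, i.e.\ it becomes the only child, or the leftmost highest child, of its node. On a segment $U^kD^k$ with base line $y=b$, this happens:
\begin{itemize}
\item at the peak;
\item at the left point of level $b+j$ iff $r\geq 2(k-j)$;
\item at the right point of level $b+j$ iff $r>2(k-j)$.
\end{itemize}
This is precisely marking the top $r$ points with the left tie-break. The case $U^kHD^k$ is analogous, and at all other points every competing child has height $>r/2$.

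\textbf{The inverse.} This is the more serious gap. A path $p'\in\mathcal{M}(\lambda')$ may contain more than $m$ unital segments of length $r$, and which ones you delete matters; your suggested rule (``rightmost or lowest'') fails.
\begin{itemize}
\item Take $\lambda=(2,2,2,2)$, $r=m=2$, and $p'=UUDUDD\,UUDUDD\in\mathcal{M}((4,4,2,2))$. Its four segments $UD$ all lie at the same height. The correct copies are the right $UD$ of each block, giving $p=UUDD\,UUDD$ and $\alpha=(1,3)$. Deleting the two rightmost instead gives $UUDUDD\,UD$, of shape $(3,3,1,1)$.
\item Even for $p'=UUDUDD$ with $\lambda=(1,1,1,1)$, $r=2$, $m=1$, deleting the left $UD$ returns the correct $p=UUDD$, but at the marked point $(1,1)$, so no $\alpha$ results.
\end{itemize}
The canonical choice is the paper's: trace the $m$ length-$r$ components of $p'_{\mathsf{min}}$ back through the run of the algorithm. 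You then have to prove three things.
\begin{enumerate}
\item These components are never produced by a wrap. The siblings left behind would have height $<r/2$ and so produce components shorter than $r$, the smallest part of $(\lambda')^t$. Hence they are intact subpaths of $p'$ moved only as wholes.
\item Deleting them does not change the rest of the output. A node left with a single child merges with its wrapper, which gives the same components, so the result lies in $\mathcal{M}(\lambda)$.
\item The deletion points are unmarked, because by the local check a copy sitting at a marked point would be the wrapped one.
\end{enumerate}
Without these, the claim that the two constructions compose to the identity is unsupported.
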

Before proving Theorem \ref{thm.localbijection}, we shall give an example. 

\begin{example}\label{ex.localbij}
    Given a Motzkin path $p \in \mathcal{M}(\lambda)$ shown in Figure \ref{fig.localbij2}, its shape is given by $\lambda = (4,4,4,3,1,1,1)$ and $\mu=\lambda^{t} = (7,4,4,3)$. Now, we would like to insert $3$ copies of the unit Motzkin path of length $2$ into $p$ (i.e., $r=2$, $m=3$, and $n=10$), then the shape of the resulting Motzkin path is $\lambda^{\prime} = (7,7,4,3,1,1,1)$. If we take the integer sequence $(2,2,9) \in \mathcal{S}(10,3)$ as an example, then the map $\psi^{(2)}$ is described below. 
    \begin{itemize}
        \item There are $4$ unital segments (of lengths $5,4,3,4$ from left to right, respectively) in $p$. For each unital segment, we mark $2$ points from top to bottom; see Figure \ref{fig.localbij2}. We then label all unmarked points by $0,1,\dots,10$ from right to left. 
        \begin{figure}[hbt!]
            \begin{tikzpicture}[scale=0.6]
                \draw[thick, black]
                    (0,0) -- (1,1)
                          -- (2,2) 
                          -- (3,3)
                          -- (4,3)
                          -- (5,2)
                          -- (6,1)
                          -- (7,2)
                          -- (8,3)
                          -- (9,2)
                          -- (10,1)
                          -- (11,2)
                          -- (12,2)
                          -- (13,1)
                          -- (14,2)
                          -- (15,3)
                          -- (16,2)
                          -- (17,1)
                          -- (18,0);
    
                \foreach \x/\y in {0/0, 1/1, 2/2, 5/2, 6/1, 9/2, 10/1, 13/1, 16/2, 17/1, 18/0}{
                    \filldraw[black] (\x,\y) circle (2pt);
                }
                \foreach \x/\y in {3/3, 4/3, 7/2, 8/3, 11/2, 12/2, 15/3, 14/2}{
                    \filldraw[red] (\x,\y) circle (2pt);
                }

                \node[below] at (0,0) {$10$};
                \node[below] at (1,0) {$9$};
                \node[below] at (2,0) {$8$};
                \node[below] at (5,0) {$7$};
                \node[below] at (6,0) {$6$};
                \node[below] at (9,0) {$5$};
                \node[below] at (10,0) {$4$};
                \node[below] at (13,0) {$3$};
                \node[below] at (16,0) {$2$};
                \node[below] at (17,0) {$1$};
                \node[below] at (18,0) {$0$};
            \end{tikzpicture}
            \caption{A Motzkin path $p$ with the marked points (in red) and the labels of the unmarked points presented in Example \ref{ex.localbij}.}
            \label{fig.localbij2}
        \end{figure}

        \item Taking $(a_1,a_2,a_3) = (2,2,9)$. We insert $u^{(2)}$ into $p$ at the lattice point labeled $a_i$ one by one. The process of each insertion is shown in Figure \ref{fig.localbij3}; the inserted unit Motzkin path is drawn in red.
        \begin{figure}[hbt!]
            \centering
            \subfigure[]{
                \begin{tikzpicture}[scale=0.6]
                    \draw[thick, black]
                        (0,0) -- (1,1)
                              -- (2,2) 
                              -- (3,3)
                              -- (4,3)
                              -- (5,2)
                              -- (6,1)
                              -- (7,2)
                              -- (8,3)
                              -- (9,2)
                              -- (10,1)
                              -- (11,2)
                              -- (12,2)
                              -- (13,1)
                              -- (14,2)
                              -- (15,3)
                              -- (16,2);
                    \draw[thick, red]
                        (16,2)-- (17,3)
                              -- (18,2);
                    \draw[thick, black]
                        (18,2)-- (19,1)
                              -- (20,0);
        
                    \foreach \x/\y in {0/0, 1/1, 2/2, 5/2, 6/1, 9/2, 10/1, 13/1, 18/2, 19/1, 20/0}{
                        \filldraw[black] (\x,\y) circle (2pt);
                    }
                    \foreach \x/\y in {3/3, 4/3, 7/2, 8/3, 11/2, 12/2, 14/2, 15/3, 16/2, 17/3}{
                        \filldraw[black] (\x,\y) circle (2pt);
                    }
    
                    \node[below] at (0,0) {$10$};
                    \node[below] at (1,0) {$9$};
                    \node[below] at (2,0) {$8$};
                    \node[below] at (5,0) {$7$};
                    \node[below] at (6,0) {$6$};
                    \node[below] at (9,0) {$5$};
                    \node[below] at (10,0) {$4$};
                    \node[below] at (13,0) {$3$};
                    \node[below] at (18,0) {$2$};
                    \node[below] at (19,0) {$1$};
                    \node[below] at (20,0) {$0$};
                \end{tikzpicture}
            }
            \vspace{1em}
            \subfigure[]{
                \begin{tikzpicture}[scale=0.6]
                    \draw[thick, black]
                        (0,0) -- (1,1)
                              -- (2,2) 
                              -- (3,3)
                              -- (4,3)
                              -- (5,2)
                              -- (6,1)
                              -- (7,2)
                              -- (8,3)
                              -- (9,2)
                              -- (10,1)
                              -- (11,2)
                              -- (12,2)
                              -- (13,1)
                              -- (14,2)
                              -- (15,3)
                              -- (16,2)
                              -- (17,3)
                              -- (18,2);
                    \draw[thick, red]
                        (18,2)-- (19,3)
                              -- (20,2);
                    \draw[thick, black]
                        (20,2)-- (21,1)
                              -- (22,0);
        
                    \foreach \x/\y in {0/0, 1/1, 2/2, 5/2, 6/1, 9/2, 10/1, 13/1, 20/2, 21/1, 22/0}{
                        \filldraw[black] (\x,\y) circle (2pt);
                    }
                    \foreach \x/\y in {3/3, 4/3, 7/2, 8/3, 11/2, 12/2, 14/2, 15/3, 16/2, 17/3, 18/2, 19/3}{
                        \filldraw[black] (\x,\y) circle (2pt);
                    }
    
                    \node[below] at (0,0) {$10$};
                    \node[below] at (1,0) {$9$};
                    \node[below] at (2,0) {$8$};
                    \node[below] at (5,0) {$7$};
                    \node[below] at (6,0) {$6$};
                    \node[below] at (9,0) {$5$};
                    \node[below] at (10,0) {$4$};
                    \node[below] at (13,0) {$3$};
                    \node[below] at (20,0) {$2$};
                    \node[below] at (21,0) {$1$};
                    \node[below] at (22,0) {$0$};
                \end{tikzpicture}
            }
            
            \vspace{1em}
            \subfigure[]{
                \begin{tikzpicture}[scale=0.6]
                    \draw[thick, black]
                        (0,0) -- (1,1);
                    \draw[thick, red]
                        (1,1) -- (2,2)
                              -- (3,1);
                    \draw[thick, black]
                        (3,1) -- (4,2)
                              -- (5,3)
                              -- (6,3)
                              -- (7,2)
                              -- (8,1)
                              -- (9,2)
                              -- (10,3)
                              -- (11,2)
                              -- (12,1)
                              -- (13,2)
                              -- (14,2)
                              -- (15,1)
                              -- (16,2)
                              -- (17,3)
                              -- (18,2)
                              -- (19,3)
                              -- (20,2)
                              -- (21,3)
                              -- (22,2)
                              -- (23,1)
                              -- (24,0);
        
                    \foreach \x/\y in {0/0, 3/1, 4/2, 7/2, 8/1, 11/2, 12/1, 15/1, 22/2, 23/1, 24/0}{
                        \filldraw[black] (\x,\y) circle (2pt);
                    }
                    \foreach \x/\y in {1/1, 2/2, 5/3, 6/3, 9/2, 10/3, 13/2, 14/2, 16/2, 17/3, 18/2, 19/3, 20/2, 21/3}{
                        \filldraw[black] (\x,\y) circle (2pt);
                    }
    
                    \node[below] at (0,0) {$10$};
                    \node[below] at (3,0) {$9$};
                    \node[below] at (4,0) {$8$};
                    \node[below] at (7,0) {$7$};
                    \node[below] at (8,0) {$6$};
                    \node[below] at (11,0) {$5$};
                    \node[below] at (12,0) {$4$};
                    \node[below] at (15,0) {$3$};
                    \node[below] at (22,0) {$2$};
                    \node[below] at (23,0) {$1$};
                    \node[below] at (24,0) {$0$};
                \end{tikzpicture}
            }        
            \caption{An illustration of inserting the unit Motzkin paths, where (a) corresponds to the first insertion at $a_1=2$, (b) corresponds to the subsequent insertion at $a_2=2$, and (c) shows the final insertion at $a_3=9$.}
            \label{fig.localbij3}
        \end{figure}
    \end{itemize}
\end{example}

The proof of Theorem \ref{thm.localbijection} follows from the following two lemmas; the first one states that the map $\psi^{(r)}$ is well-defined, and the second one tells us how to identify each piece of a unit Motzkin path and remove all of them from $p$ for the inverse map of $\psi^{(r)}$. 

\begin{lemma}\label{lem.localbij1}
    The map $\psi^{(r)}$ is well-defined.
\end{lemma}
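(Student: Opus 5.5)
The plan is to prove three things in turn: the marking in step (1) of $\psi^{(r)}$ can always be carried out; exactly $n+1$ points are left unmarked, so the labels $0,1,\dots,n$ and the sequences in $\mathcal{S}(n,m)$ match up; and the inserted path has shape $\lambda'$. The first two are counting arguments based on the shape algorithm. The third needs an analysis of how insertion interacts with the modification \eqref{eq.interchange}.

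\textbf{Unital segments of $p$.} First I would show that the unital segments of $p$ correspond one-to-one to the blocks of $p_{\mathsf{min}}$, with the same lengths. So there are exactly $\lambda_1$ of them, with lengths $\mu_1,\dots,\mu_{\lambda_1}$. The argument is by induction on the iterations of the shape algorithm. In \eqref{eq.interchange} the subpaths $q_j$ are moved as whole blocks, so their internal unital segments are untouched. The piece $U^m\oplus q_t\oplus D^m$ is unital exactly when $q_t$ is a unital segment whose steps extend those of $p_i$. Because $m$ is maximal in \eqref{eq.primedecomp}, $k\ge 2$ or $q_t$ is not of the form $U\cdots D$. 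Checking both cases should show that the multiset of maximal unital segments is preserved, and that at the end each such segment is a whole block $p_j$.

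\textbf{Marking and counting.} Each unital segment has length $\mu_j\ge \mu_{\lambda_1}>r$, so it has at least $r+2$ lattice points. Its $r$ highest points, chosen with the left-first tie-break, are therefore interior points of the segment: they are the points around the peak or the $H$ step. Consecutive segments meet at most at endpoints, so the marked points of different segments are distinct. The total number of marked points is $r\lambda_1=\lambda_1+\cdots+\lambda_r$, using $\lambda_1=\cdots=\lambda_r$. The path has $|\lambda|+1$ lattice points, so exactly $|\lambda|+1-(\lambda_1+\cdots+\lambda_r)=n+1$ points remain unmarked. This makes step (2) consistent. I would also note that inserting $u^{(r)}$ at the point labelled $a_i$ keeps the labels of the unmarked points to its right and relabels nothing else. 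Hence inserting in the order $a_1\le\cdots\le a_m$ is well defined: the inserted copies are placed side by side at a single point when $a_i=a_{i+1}$.

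\textbf{Shape of the result.} Let $p'$ be the path obtained after inserting the $m$ copies. I want $\theta(p')=\lambda'$, which amounts to showing that $p'_{\mathsf{min}}$ is $p_{\mathsf{min}}$ with $m$ extra unit blocks $u^{(r)}$ attached. Each copy of $u^{(r)}$ sits at an unmarked point, so it never meets the top $r$ points of any unital segment. From this I would show two facts. First, each inserted copy is itself a maximal unital segment of $p'$, because it is not glued into the peak region of an existing segment. Second, every original unital segment of $p$ is still a maximal unital segment of $p'$. Then I would run the shape algorithm on $p'$ in parallel with $p$. Every inserted $u^{(r)}$ has height $r/2<\mu_{\lambda_1}/2\le \mathsf{ht}$ of every original segment. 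So an inserted copy is never chosen as the block $q_t$ of maximum height. It is always carried along as one of the blocks $q_j$, $j\ne t$, or it ends up on the base line. Each step of the algorithm on $p'$ is therefore the corresponding step on $p$ with the inserted copies carried along. By the unital-segment correspondence above, $p'_{\mathsf{min}}$ consists of the blocks of $p_{\mathsf{min}}$ together with $m$ blocks of length $r$. This gives $\mu'=(\mu_1,\dots,\mu_{\lambda_1},r^m)$, hence shape $\lambda'$.

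I expect the main obstacle to be the second fact and the parallel run of the algorithm when a copy is inserted at an unmarked point on the lower part of a unital segment, for example right at its foot. The danger is that $u^{(r)}$ merges with a neighbouring $U^k$ or $D^k$ run into a longer unit segment, or changes which block achieves the maximum height or its leftmost tie. I would handle this first by a careful case check on the local pattern around the insertion point. The marking rule, which takes the top $r$ points and the left one at ties, is designed exactly so that the excluded points are those where such a merge or tie change could happen.
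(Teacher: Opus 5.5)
Your outline (count the unital segments, mark $r$ points in each to leave $n+1$ labels, then show each copy of $u^{(r)}$ survives as a separate block) is the paper's, but two facts you build it on are false.

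\textbf{Segment lengths are not preserved.} Unital segments of $p$ do not keep their lengths under the shape algorithm. The wrap in \eqref{eq.interchange} turns a unital $q_t$ of length $L$ into the unital path $U^m\oplus q_t\oplus D^m$ of length $L+2m$. In Example~\ref{ex.localbij} the unital segments have lengths $5,4,3,4$, while $\mu=(7,4,4,3)$, because $U^2HD^2$ becomes $U^3HD^3$.

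Only the number of segments is invariant. That is enough for $|\lambda|+1-r\lambda_1=n+1$, but your derivation of $\mathsf{len}(s)\ge\mu_{\lambda_1}>r$ collapses, and so does the height bound you use later. The bound is true but needs its own argument:
\begin{itemize}
\item If $s$ is never lengthened, it is itself a block of $p_{\mathsf{min}}$.
\item Otherwise, at its first lengthening $s=q_t$ is the tallest of the $k\ge 2$ blocks in \eqref{eq.primedecomp}. Any other $q_j$ ends as a unit block of length $2\,\mathsf{ht}(q_j)\le 2\,\mathsf{ht}(s)=\mathsf{len}(s)$, because the algorithm preserves heights.
\end{itemize}

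\textbf{Original segments need not survive in $p'$.} Your claim that every original unital segment is still a maximal unital segment of $p'$ is false, so it cannot be proved. An unmarked point can lie on the lower part of a segment's up- or down-run, and inserting there cuts the segment. For $p=U^3D^3$ (so $\mu=(6)$) and $r=1$, only the peak is marked. Inserting $H$ at $(1,1)$ gives $UHU^2D^3$, whose unital segments are $H$ and $U^2D^2$.

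The algorithm recovers $U^3D^3$ only through an extra decomposition and wrap at height $1$, which has no counterpart in the run on $p$. So the two runs are not step-for-step parallel. Also, comparing $r/2$ with the heights of whole original segments is not the relevant comparison.

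What must be checked is that $u^{(r)}$ is never the selected block among its siblings at the level where it sits.
\begin{itemize}
\item Inserted at relative height $j<a$ on the up-run of $U^aD^a$, its sibling is the truncated piece $U^{a-j}D^{a-j}$ to its right. Ties go left, so you need $2(a-j)>r$.
\item On the down-run the truncated piece is to its left, so $2(a-j)\ge r$ suffices.
\item For $U^aHD^a$ (now with $j\le a$), the same holds with $a-j+\frac{1}{2}$ in place of $a-j$.
\end{itemize}
These are exactly the conditions for the point to be unmarked. When they hold, the algorithm splits off $u^{(r)}$ and re-wraps the truncated piece to its original length, so the final multiset of lengths gains exactly one part $r$. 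This local verification, which the paper compresses into ``a direct inspection of a pyramid and a plateau,'' is what should replace segment preservation and the parallel run.
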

\begin{proof}
    First, we claim that there are $\lambda_1$ unital segments in $p$. The key observation is that the modifications \eqref{eq.primedecomp} and \eqref{eq.interchange} in the shape algorithm turn each unital segment into a unit Motzkin path, without creating or deleting a unital segment. Since the number of unit Motzkin paths in $p_{\mathsf{min}}$ is $\lambda_1$, the number of unital segments in $p$ is also $\lambda_1$.

    Second, each unital segment in $p$ has length greater than $r$. Otherwise, after applying the shape algorithm, we would obtain a unit Motzkin path of length at most $r$, contradicting the assumption $\mu_{\lambda_1}>r$.

    We next count the admissible positions. For a unital segment of length greater than $r$, the marking rule selects exactly $r$ lattice points, read from top to bottom and from left to right when two points have the same height. If the $(r-1)$th and $r$th selected points lie at different heights, inserting $u^{(r)}$ at either the $r$th point or the other point at the same height in this unital segment yields the same Motzkin path. To avoid redundancy, we mark only the leftmost such point. In all other cases, a direct inspection of a pyramid and a plateau shows that these are exactly the points at which inserting $u^{(r)}$ would make the inserted path merge with the surrounding unital segment in the shape algorithm. This produces an ordered sequence $\mu^{\prime}=(\mu_1^{\prime},\dots,\mu^{\prime}_{\lambda_1},\dots)$ with $\mu^{\prime}_i > \mu_{i}$ for some $1 \leq i \leq \lambda_1$. Consequently, the resulting path fails to have the intended shape $\lambda^{\prime}$. 

    At every unmarked point, the inserted $u^{(r)}$ is separated as an additional unit Motzkin path of length $r$ in the final path $p_{\mathsf{min}}$. Since the total number of lattice points of $p$ is $|\lambda|+1$ and there are $\lambda_1$ unital segments, the number of unmarked points is
    \begin{equation*}
        |\lambda|+1-r\lambda_1
        =\lambda_{r+1}+\cdots+\lambda_\ell+1
        =n+1,
    \end{equation*}
    where we used $\lambda_1=\cdots=\lambda_t$ for some $t>r$. Hence, the labels $0,1,\dots,n$ in Step $(2)$ are well-defined.

    Finally, let $\alpha=(a_1,\dots,a_m)\in\mathcal{S}(n,m)$. Each insertion at an unmarked point contributes one additional unit Motzkin path of length $r$ under the shape algorithm, and leaves previous unit Motzkin paths unchanged. Thus, after all $m$ insertions, the multiset of lengths of the unit Motzkin paths in the resulting path is obtained from that of $p_{\mathsf{min}}$ by adjoining $m$ copies of $r$. Equivalently, the shape changes from $\lambda$ to
    \begin{equation*}
        (\lambda_1,\dots,\lambda_\ell)
        +(m^r ,0^{\ell-r})
        =\lambda^{\prime}.
    \end{equation*}
    Therefore, $\psi^{(r)}(\alpha,p)\in\mathcal{M}(\lambda^{\prime})$.
\end{proof}

For the inverse map of $\psi^{(r)}$, we use the following observation. It gives a canonical way to identify the unital segments that have to be removed.

\begin{lemma}\label{lem.numberofunitpath}
    Let $\lambda,\lambda^{\prime},r$, and $m$ be as in \eqref{eq.localbijection}, and let $p^{\prime}\in\mathcal{M}(\lambda^{\prime})$. Run the shape algorithm on $p^{\prime}$ while keeping track of the original steps. The $m$ unit Motzkin paths of length $r$ in $p^{\prime}_{\mathsf{min}}$ trace back to $m$ pairwise disjoint unital segments of length $r$ of $p^{\prime}$. Deleting these $m$ subpaths gives a Motzkin path $p\in\mathcal{M}(\lambda)$, and the deletion positions are those unmarked points of $p$ in the construction of $\psi^{(r)}$.
\end{lemma}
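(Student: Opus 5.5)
The proof tracks the individual steps of $p^{\prime}$ through the shape algorithm. Every modification \eqref{eq.interchange} only permutes whole subpaths of $\overline{p}$: the blocks $q_j$, the prefix $U^m$ and the suffix $D^m$. Each step of $p^{\prime}$ therefore has a well-defined image in every intermediate path, and in particular in $p^{\prime}_{\mathsf{min}}$.

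\textbf{Step 1: unit Motzkin paths of $p^{\prime}_{\mathsf{min}}$ trace back to unital segments.} We use the observation from the proof of Lemma \ref{lem.localbij1}. The modification never splits or merges a unital segment. It only detaches a $U^k$ prefix and a $D^k$ suffix around a highest block, or separates a block $q_j$ from its surroundings. By induction on the number of iterations, the following holds. For each unit Motzkin path $u$ in the current block decomposition, the steps of $u$ form a unital segment $U^kD^k$ or $U^kHD^k$ of $p^{\prime}$, possibly with the $U$'s and $D$'s separated in $p^{\prime}$ by blocks that were moved away later. We must rule out that separation for the paths of length $r$. Here we use $\mu^{\prime}=(\mu_1,\dots,\mu_{\lambda_1},r^m)$, and in particular that $\mu_{\lambda_1}>r$.

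Suppose the $U^k$ prefix of some unit Motzkin path of length $r$ enclosed a nonempty block $q$. Then $q$ has height at most $\mathsf{ht}$ of the enclosed maximal block, and after separation it becomes a unit Motzkin path of length $<r$. Such a path does not exist in $p^{\prime}_{\mathsf{min}}$. So each of the $m$ unit paths of length $r$ is a contiguous unital segment of $p^{\prime}$, and they are pairwise disjoint because they use distinct steps. Removing one of them from $p^{\prime}$ commutes with the shape algorithm: the same sequence of modifications applies with that segment absent, and the result is $p^{\prime}_{\mathsf{min}}$ minus one copy of $u^{(r)}$. Hence, by Lemma \ref{lem.shapeinvariant} and step (3) of the algorithm, deleting all $m$ segments gives $p$ with $\theta(p)=\lambda$.

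\textbf{Step 2: the deletion positions are unmarked points of $p$.} Suppose a deleted segment $u^{(r)}$ had been glued at a marked point $x$ of a unital segment $S$ of $p$. By the pyramid and plateau inspection in the proof of Lemma \ref{lem.localbij1}, reinserting it at $x$ makes it merge with $S$ in the shape algorithm. Then no unit path of length exactly $r$ arises from it, and some $\mu^{\prime}_i>\mu_i$ instead, contradicting $\theta(p^{\prime})=\lambda^{\prime}$. Consecutive deleted copies inserted at the same point are all attached at that one unmarked point. In that case the right endpoint convention of step (3) determines the positions, which gives the weakly increasing sequence $(a_1,\dots,a_m)$.

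\textbf{Main obstacle.} The hardest step is showing that the $m$ traced paths are contiguous in $p^{\prime}$ and that their deletion commutes with the algorithm. The difficulty is the tie-breaking rule for $t$. It might choose differently once a segment of length $r$ is removed. I would first show that a segment of length $r<\mu_{\lambda_1}$ can never realize the maximal height at any stage, because its height is below that of every longer unit path. Consequently it is never the chosen $q_t$, and the tie-breaking choices are the same with or without it.
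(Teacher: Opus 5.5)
Your outline is correct and follows the paper's proof. You track the steps of $p^{\prime}$ through the shape algorithm, show that the $m$ length-$r$ components of $p^{\prime}_{\mathsf{min}}$ come from intact unital segments of $p^{\prime}$, delete them and argue that deletion commutes with the algorithm, and exclude marked deletion points by the merging argument from the proof of Lemma \ref{lem.localbij1}.

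Your contiguity argument is in fact more complete than the paper's one-liner, which only observes that a distinguished copy is never selected and wrapped. You also exclude the possibility that the length-$r$ component was itself produced by a wrapping $U^{M}q_tD^{M}$. Such a wrapping always releases at least one block $q_j$ enclosed by $U^{M}$. That block eventually contributes a unit component of length $2\,\mathsf{ht}(q_j)\le 2\,\mathsf{ht}(q_t)<2(M+\mathsf{ht}(q_t))=r$, contradicting that $r$ is the smallest part of $\mu^{\prime}$. Phrase this carefully: $q_j$ need not itself be unit, so it does not ``become'' a unit path of length less than $r$; it produces one, because wrapping preserves height.

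One claim in your final paragraph should be fixed. It is not true that a distinguished copy can never realize the maximal height. Take $p^{\prime}=U\,UD\,UD\,D$ with $r=2$, $m=1$, $\lambda=(1,1,1,1)$. The algorithm writes $p^{\prime}=U\oplus q_1\oplus q_2\oplus D$ with $q_1=q_2=UD$, and the distinguished copy $q_2$ ties for the maximum; it is just not the leftmost one. What you need is only that a distinguished copy is never the selected $q_t$, since otherwise it would be wrapped into a longer unit path. Because $q_t$ is the leftmost block of maximal height, removing a non-selected block cannot change the choice.

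Also, the sequence of modifications is not literally the same after deletion. In this example $p=UUDD$ is already unit. In general, when $k=2$, removing the released copy leaves $U^{M}q_tD^{M}$, which may already be unit or may decompose with an exponent larger than $M$. But this is exactly the wrapped component produced in the run on $p^{\prime}$. So the two runs still reach the same canonical path up to the deleted copy, which is all the argument requires.
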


\begin{proof}
    Since $(\lambda^{\prime})^t$ is obtained from $\lambda^t$ by adjoining exactly $m$ parts equal to $r$, the canonical path $p^{\prime}_{\mathsf{min}}$ contains exactly $m$ unit Motzkin paths of length $r$, while all its other unit Motzkin paths have length greater than $r$. We keep these $m$ components distinguished and trace their steps backward through the recorded modifications of the shape algorithm.

    Each modification \eqref{eq.interchange} only moves prime components and wraps the selected component by the same number of $U$ and $D$ steps. If a distinguished $u^{(r)}$ were selected and wrapped, it would become a unit path of length strictly greater than $r$ and would remain unit thereafter. Hence, a distinguished component of length $r$ in the final canonical path is never changed internally; tracing backward gives an intact copy of $u^{(r)}$ in the original path $p^{\prime}$. The $m$ traced copies are pairwise disjoint. They are unital segments, since otherwise one of them would be contained in a larger unital segment and could not survive as a separate length-$r$ component in $p^{\prime}_{\mathsf{min}}$.

    Delete the $m$ traced copies from $p^{\prime}$. Deletion commutes with every recorded modification of the shape algorithm because the distinguished copies are moved only as whole subpaths. Therefore, the canonical path of the resulting path $p$ is obtained from $p^{\prime}_{\mathsf{min}}$ by deleting its $m$ unital segments of length $r$. The shape is exactly $\lambda$, so $p\in\mathcal{M}(\lambda)$.

    It remains to check the deletion positions. If one of these positions were marked in $p$, then, by the local inspection used in the proof of Lemma \ref{lem.localbij1}, reinserting $u^{(r)}$ there would merge it with the surrounding unital segment under the shape algorithm; it would not survive as a separate length-$r$ component. This contradicts the way that the distinguished components were traced from $p^{\prime}_{\mathsf{min}}$. Hence, every deletion position is unmarked.
\end{proof}

\begin{proof}[Proof of Theorem \ref{thm.localbijection}]
    Lemma \ref{lem.localbij1} shows that $\psi^{(r)}$ is well-defined. Given $p^{\prime}\in\mathcal{M}(\lambda^{\prime})$, apply Lemma \ref{lem.numberofunitpath} and delete the $m$ distinguished copies of $u^{(r)}$. This gives a unique path $p\in\mathcal{M}(\lambda)$ together with $m$ unmarked deletion positions. Label the unmarked points of $p$ by $0,1,\dots,n$ from right to left as in Step $(2)$, and let $a_1\leq\cdots\leq a_m$ be the labels of the deletion positions, with repetitions when several copies were deleted from the same position. Then $\alpha=(a_1,\dots,a_m)\in\mathcal{S}(n,m)$ and, by construction,
    \[
        \psi^{(r)}(\alpha,p)=p^{\prime}.
    \]
    The shape algorithm and the tracing procedure are deterministic, so the recovered pair $(\alpha,p)$ is unique. Thus, $\psi^{(r)}$ is a bijection.
\end{proof}

\subsection{A combinatorial proof of Theorem \ref{thm.KP25-q} and Corollary \ref{cor.qcomajor}}\label{sec.pfq-analogue}

In this subsection, we investigate how the major and comajor statistics of Motzkin paths change under the local bijection $\psi^{(r)}$. This is the key part of our combinatorial proof of the $q$-enumeration formula for the Richardson tableaux of a given shape. Recall that a weak valley (resp., weak peak) of a Motzkin path $p$ is a position at which one of the patterns $HH, DU, DH$, or $HU$ (resp., $UD, UU, DD, UH$, or $HD$) occurs. The comajor (resp., major) statistic of $p$ is the sum of all its weak valleys (resp., weak peaks); see Section \ref{sec.prestat}. For convenience, we define the following notations: for a Motzkin path $p$, $\mathsf{len}(p)$ denotes the length of $p$; for a sequence $\alpha = (a_1,\dots,a_m)$, $\mathsf{add}(\alpha) = a_1 + \cdots + a_m$ denotes the sum of all the terms in $\alpha$. We note that $\mathsf{add}(\alpha) = 0$ if $\alpha$ is the empty sequence. We have the following lemma.
\begin{lemma}\label{lem.statlocalbij}
    Let $\lambda,\lambda^{\prime},r,n$, and $m$ be as in \eqref{eq.localbijection}. Given $p \in \mathcal{M}(\lambda)$, $\alpha \in \mathcal{S}(n,m)$, and $p^\prime = \psi^{(r)}(\alpha, p)$. Then
    \begin{align}
        \mathsf{maj}(p^{\prime}) & = \mathsf{maj}(p) + \mathsf{add}(\alpha) + m(r-1)\cdot \left( \mathsf{len}(p) + \frac{rm}{2} \right), \label{eq.bijmajor} \\ 
        \mathsf{comaj}(p^\prime) & = \mathsf{comaj}(p) -\mathsf{add}(\alpha) + m \cdot \mathsf{len}(p) + r\binom{m}{2}. \label{eq.bijcomajor} 
    \end{align}
\end{lemma}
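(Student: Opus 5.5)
Our plan is to prove \eqref{eq.bijcomajor} first and then deduce \eqref{eq.bijmajor} from it by complementation. Every index $i\in[\mathsf{len}(p)-1]$ of a Motzkin path is either a weak valley or a weak peak, so
\[
\mathsf{maj}(p)+\mathsf{comaj}(p)=\binom{\mathsf{len}(p)}{2}.
\]
Write $L=\mathsf{len}(p)$. Since $\mathsf{len}(p')=L+rm$, we get
\[
\mathsf{maj}(p')-\mathsf{maj}(p)=rmL+\binom{rm}{2}-\bigl(\mathsf{comaj}(p')-\mathsf{comaj}(p)\bigr).
\]
Substituting \eqref{eq.bijcomajor} turns the right-hand side into $\mathsf{add}(\alpha)+m(r-1)L+\binom{rm}{2}-r\binom{m}{2}$. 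The identity $\binom{rm}{2}-r\binom{m}{2}=\tfrac{rm\cdot m(r-1)}{2}$ then gives exactly \eqref{eq.bijmajor}. So everything reduces to the comajor formula.

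For \eqref{eq.bijcomajor} we argue one insertion at a time. We insert the copies in the order $a_1\le a_2\le\cdots\le a_m$. Labels count unmarked points from the right, so each new copy is inserted weakly to the left of the earlier ones. The earlier copies therefore do not disturb the labels of the points still to be used: as Figure~\ref{fig.localbij3} shows, the labelled points keep their labels and the inserted points are new. The key single-step claim is the following. If a path $\bar p$ of length $\bar L$ arises in this process and a copy of $u^{(r)}$ is inserted at the point carrying label $a$, then $\mathsf{comaj}$ increases by exactly $\bar L-a$. Granting this, the $i$th insertion happens in a path of length $L+(i-1)r$, and summing over $i$ gives
\[
\sum_{i=1}^m\bigl(L+(i-1)r-a_i\bigr)=mL+r\binom{m}{2}-\mathsf{add}(\alpha),
\]
which is \eqref{eq.bijcomajor}. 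The sanity check $p=UD$, $r=1$ confirms the claim: inserting $H$ at label $0$ or at label $1$ raises $\mathsf{comaj}$ by $2$ and $1$ respectively.

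To prove the single-step claim, let $x$ be the $x$-coordinate of the insertion point. Valleys at positions $<x$ are unchanged. Valleys at positions $>x$ are shifted by $r$. The inserted $u^{(r)}$ itself contains no weak valley, since it is of the form $U^kD^k$ or $U^kHD^k$. The only other changes occur at the two junction positions $x$ and $x+r$. So
\[
\Delta\mathsf{comaj}=r\cdot\#\{\text{valleys of }\bar p\text{ right of }x\}+(\text{junction correction}).
\]
This must be matched with $\bar L-a$, i.e. with $x$ plus the number of marked points to the right of $x$. The matching uses the structure of marked points: each unital segment lying to the right of $x$ carries exactly $r$ marked points. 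The weak valleys are the places where consecutive unital segments meet. Hence the factor $r$ times the number of valleys to the right should correspond to the $r$ marked points of each unital segment completed to the right of $x$. The junction correction should account for the segment containing $x$, if any, with the marking/tiebreak rule (mark the left point, from top to bottom) fixing which side gets counted. We expect this local case analysis to be the main obstacle. It depends on the step types $DU$, $DH$, $HU$, $HH$ or peak patterns around an unmarked point, on the position of $x$ relative to the top of its unital segment, and on parity. Our first attempt would be to establish the bijective correspondence "valleys right of $x$" $\leftrightarrow$ "unital segments right of $x$" once, and then handle the finitely many junction configurations separately.
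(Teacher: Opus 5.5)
Your reduction of \eqref{eq.bijmajor} to \eqref{eq.bijcomajor} via $\mathsf{maj}+\mathsf{comaj}=\binom{\mathsf{len}}{2}$ is correct, and doing it once for all $m$ copies is slightly cleaner than the paper's per-insertion computation. The final summation is also correct. The gap is the single-step claim: inserting $u^{(r)}$ at the unmarked point labelled $a$ of a path of length $\bar L$ raises $\mathsf{comaj}$ by exactly $\bar L-a$. This claim is the entire content of the lemma. Your last paragraph only says what \emph{should} correspond and defers the case analysis. Nothing you wrote uses the marking rule or the hypothesis that every unital segment of $p$ has length $>r$, and these are exactly what make the claim true.

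There is also a smaller inaccuracy for repeated labels. You say each copy goes weakly to the left of earlier ones, but when $a_j=a_{j-1}$ the new copy sits immediately to the \emph{right} of the previous one (Figure~\ref{fig.localbij3}(a)$\to$(b)). The resulting path is the same. However, in the intermediate path the insertion point is then preceded by the last step of an inserted copy. So those configurations must also be checked, with the unlabelled points of inserted copies playing the role of marks.

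The missing idea is the local shape of $p$ between consecutive unmarked points. The paper writes $p=z_n\oplus\cdots\oplus z_1$, with $z_i$ the piece between the unmarked points labelled $i$ and $i-1$. From the marking rule, each $z_i$ is $D$, $U$, $u^{(r+1)}$ or $U\oplus u^{(r)}$. Moreover, a $D$-piece is preceded by $D$ or $H$, and a $U$-piece is followed by $U$ or $H$. The paper then telescopes. At label $0$, the last step of $p$ ($D$ or $H$) meets the first step of $u^{(r)}$ ($U$ or $H$), creating exactly one weak valley, at position $\mathsf{len}(p)$. Passing from label $i$ to label $i+1$ is the interchange of $u^{(r)}$ with $z_{i+1}$, which lowers $\mathsf{comaj}$ by exactly $1$ in each of the four cases.

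Your direct count can also be finished, but it needs facts you have not stated. First, weak valleys are in bijection with the \emph{gaps} between consecutive unital segments. There is one valley per gap, lying in a stretch $D^aU^b$, and not necessarily where two segments meet. Second, the $r$ marks of a segment form a contiguous block in its interior. Third, you need a case check over the possible positions of the unmarked point: on an ascending stretch, on a descending stretch, or at a valley. Write $o_x$ for the indicator of a weak valley of the old path at $x$, and $\nu_x,\nu_{x+r}$ for the indicators of weak valleys of the new path at the two junctions. Write $V_{>x}$ for the number of old valleys right of $x$. The case check must show that $\nu_x+\nu_{x+r}-o_x=1$, and that $V_{>x}+\nu_{x+r}$ equals the number of segments whose marks lie right of $x$. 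Then $\Delta\mathsf{comaj}=x+r(V_{>x}+\nu_{x+r})=\bar L-a$.
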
    
\begin{proof}
    In the case of $m=0$, the bijection $\psi^{(r)}$ is an identity map and the set $\mathcal{S}(n,m)$ contains only the empty sequence. Here, $\mathsf{add}(\alpha) = 0$. It is obvious that both \eqref{eq.bijmajor} and \eqref{eq.bijcomajor} hold. 
    
    Now, we assume $m>0$. Let $z_i$ be the subpath of $p$ between the unmarked points labeled $i$ and $i-1$. We write $\widetilde{p_i}$ for the Motzkin path obtained from $p$ by inserting $u^{(r)}$ at the unmarked point of $p$ labeled $i$. Clearly, we have 
    \begin{align}
        p & = z_n \oplus \cdots \oplus z_1,\\
        \widetilde{p_i} & = z_n \oplus \cdots \oplus z_{i+1} \oplus u^{(r)} \oplus z_{i} \oplus \cdots \oplus z_1. \label{eq.insertconcatenation}
    \end{align}   
    For any Motzkin path $p$, it is obvious that
    \begin{equation}\label{eq.majcomaj}
        \mathsf{maj}(p) + \mathsf{comaj}(p) = \binom{\mathsf{len}(p)}{2}.
    \end{equation}
    We assume that when inserting $u^{(r)}$ into the unmarked point of $p$ labeled $i$, the comajor statistic is increased by $d(i)$, where $0 \leq i \leq n$. In other words, $d(i) = \mathsf{comaj}(\widetilde{p_i}) - \mathsf{comaj}(p)$. Due to \eqref{eq.majcomaj}, the major statistic is then increased by
    \begin{align}
        \mathsf{maj}(\widetilde{p_i}) - \mathsf{maj}(p) & = \binom{\mathsf{len}(p)+r}{2} - \mathsf{comaj}(\widetilde{p_i})  - \mathsf{maj}(p) \nonumber \\
        & = \binom{\mathsf{len}(p)+r}{2} - (\mathsf{comaj}(p) + d(i)) - \mathsf{maj}(p) \nonumber \\
        & = \binom{\mathsf{len}(p)+r}{2} - \binom{\mathsf{len}(p)}{2} - d(i) \nonumber \\
        & = r \cdot \mathsf{len}(p) + \binom{r}{2} -d(i). \label{eq.majincrement} 
    \end{align}
    
    Next, we claim that the quantity $d(i) = \mathsf{len}(p) - i$. When $i=0$, $u^{(r)}$ is inserted at the end of $p$; it is clear that this creates one weak valley and the comajor statistic is increased by $\mathsf{len}(p)$, so $d(0) = \mathsf{len}(p)$. When $i>0$, an important observation is that inserting $u^{(r)}$ at the point labeled $i+1$ can be viewed as interchanging $z_{i+1}$ and $u^{(r)}$ in \eqref{eq.insertconcatenation}. It then suffices to show that the above interchanging operation decreases the comajor statistic by $1$. 

    By the marking rule in the description of $\psi^{(r)}$, each subpath $z_i$ must be one of the forms $D$, $U$, $u^{(r+1)}$, or $U \oplus u^{(r)}$. If $z_{i+1} = D$, then the step immediately to the left of $z_{i+1}$ is a $D$ step or an $H$ step (the latter only happens when $r=1$); otherwise, the point labeled $i+1$ would form a peak $UD$, which contradicts the choice of our marked points. See Figure \ref{fig.swap} (from $\widetilde{p_0}$ to $\widetilde{p_1}$). By similar reasoning, if $z_{i+1} = U$, then the step immediately to the right of $z_{i+1}$ is a $U$ step or an $H$ step (the latter only happens when $r=1$). For these two cases, after interchanging $z_{i+1}$ and $u^{(r)}$, the weak-valley index decreases by $1$, so does the comajor statistic. In the cases of $z_{i+1} = u^{(r+1)}$ and $z_{i+1} = U \oplus u^{(r)}$, it is easy to see that interchanging $z_{i+1}$ and $u^{(r)}$ decreases the weak-valley index by $1$, and hence decreases the comajor statistic by $1$. See Figure \ref{fig.swap} (from $\widetilde{p_1}$ to $\widetilde{p_2}$ and from $\widetilde{p_2}$ to $\widetilde{p_3}$). Therefore, $d(i) = \mathsf{len}(p) - i$.
    \begin{figure}[hbt!]
        \centering
        \subfigure{
        \begin{minipage}{0.4\textwidth}
            \centering
            \begin{tikzpicture}[scale=0.6]
                \draw[thick, black]
                    (0,0) -- (1,1)
                          -- (2,2) 
                          -- (3,1)
                          -- (4,2)
                          -- (5,2)
                          -- (6,1)
                          -- (7,0);
                \draw[thick, red]
                    (7,0) -- (8,1)
                          -- (9,0);
    
                \foreach \x/\y in {0/0, 1/1, 2/2, 3/1, 4/2, 5/2, 6/1, 7/0, 8/1, 9/0}{
                    \filldraw[black] (\x,\y) circle (2pt);
                }
                
                \node[below] at (0,0) {$3$};
                \node[below] at (3,0) {$2$};
                \node[below] at (6,0) {$1$};
                \node[below] at (9,0) {$0$};
                
            \end{tikzpicture}

            $\widetilde{p_0}$
        \end{minipage}
        $\longrightarrow$
        \begin{minipage}{0.4\textwidth}
            \centering
            \begin{tikzpicture}[scale=0.6]
                \draw[thick, black]
                    (0,0) -- (1,1)
                          -- (2,2) 
                          -- (3,1)
                          -- (4,2)
                          -- (5,2)
                          -- (6,1);
                \draw[thick, red]
                    (6,1) -- (7,2)
                          -- (8,1);
                \draw[thick, black]
                    (8,1) -- (9,0);
    
                \foreach \x/\y in {0/0, 1/1, 2/2, 3/1, 4/2, 5/2, 6/1, 7/2, 8/1, 9/0}{
                    \filldraw[black] (\x,\y) circle (2pt);
                }    
                
                \node[below] at (0,0) {$3$};
                \node[below] at (3,0) {$2$};
                \node[below] at (8,0) {$1$};
                \node[below] at (9,0) {$0$};
            \end{tikzpicture}

            $\widetilde{p_1}$
            
        \end{minipage}
        $\longrightarrow$
        }
        \subfigure{
        \begin{minipage}{0.4\textwidth}
            \centering
            \begin{tikzpicture}[scale=0.6]
                \draw[thick, black]
                    (0,0) -- (1,1)
                          -- (2,2) 
                          -- (3,1);
                \draw[thick, red]
                    (3,1) -- (4,2)
                          -- (5,1);
                \draw[thick, black]
                    (5,1) -- (6,2)
                          -- (7,2)
                          -- (8,1)
                          -- (9,0);
    
                \foreach \x/\y in {0/0, 1/1, 2/2, 3/1, 4/2, 5/1, 6/2, 7/2, 8/1, 9/0}{
                    \filldraw[black] (\x,\y) circle (2pt);
                }
                
                \node[below] at (0,0) {$3$};
                \node[below] at (5,0) {$2$};
                \node[below] at (8,0) {$1$};
                \node[below] at (9,0) {$0$};
            \end{tikzpicture}

            $\widetilde{p_2}$
        \end{minipage}
        $\longrightarrow$
        \begin{minipage}{0.4\textwidth}
            \centering
            \begin{tikzpicture}[scale=0.6]
                \draw[thick, red]
                    (0,0) -- (1,1)
                          -- (2,0);
                \draw[thick, black]
                    (2,0) -- (3,1)
                          -- (4,2) 
                          -- (5,1)
                          -- (6,2)
                          -- (7,2)
                          -- (8,1)
                          -- (9,0);
    
                \foreach \x/\y in {0/0, 1/1, 2/0, 3/1, 4/2, 5/1, 6/2, 7/2, 8/1, 9/0}{
                    \filldraw[black] (\x,\y) circle (2pt);
                }
                
                \node[below] at (2,0) {$3$};
                \node[below] at (5,0) {$2$};
                \node[below] at (8,0) {$1$};
                \node[below] at (9,0) {$0$};
            \end{tikzpicture}

            $\widetilde{p_3}$
        \end{minipage}
        \hspace{5mm}
        }
        \caption{An illustration of obtaining $\widetilde{p_i}$ by interchanging $z_i$ and $u^{(2)}$.}
        \label{fig.swap}
    \end{figure}

    Combining the above discussion and \eqref{eq.majincrement}, after we insert the $j$th $u^{(r)}$ into $p$ at the point labeled by $a_j$, the major statistic of the resulting Motzkin path increases by
    \begin{equation*}
        r \cdot (\mathsf{len}(p)+r(j-1) )+ \binom{r}{2} - (\mathsf{len}(p) + r(j-1) - a_j) = a_j + (r-1) \cdot (\mathsf{len}(p) + r(j-1)) + \binom{r}{2}.
    \end{equation*}
    As a consequence, when we insert $m$ copies of $u^{(r)}$ into $p$ at the labels given by $\alpha=(a_1,\dots,a_m) \in \mathcal{S}(n,m)$, respectively, the major statistic of the resulting Motzkin path $p^{\prime}$ is given by
    \begin{align*}
        \mathsf{maj}(p^{\prime}) & = \mathsf{maj}(p) + \sum_{j=1}^m \left( a_j + (r-1) \cdot (\mathsf{len}(p) + r(j-1)) + \binom{r}{2} \right)\\
            &= \mathsf{maj}(p) + \mathsf{add}(\alpha) + m(r-1) \cdot \mathsf{len}(p) + r(r-1) \binom{m}{2} + m \binom{r}{2} \\
            &= \mathsf{maj}(p) + \mathsf{add}(\alpha) + m(r-1)\cdot \left( \mathsf{len}(p) + \dfrac{rm}{2} \right).
    \end{align*}
    This proves \eqref{eq.bijmajor}.
    
    Similarly, after we insert the $j$th $u^{(r)}$ into $p$ at the point labeled by $a_j$, the comajor statistic of the resulting Motzkin path increases by
    \begin{equation*}
        \mathsf{len}(p) - a_j.
    \end{equation*}
    In general, when inserting $m$ copies of $u^{(r)}$ into $p$ at the labels given by $\alpha=(a_1,\dots,a_m) \in \mathcal{S}(n,m)$, respectively, the comajor statistic of the resulting Motzkin path $p^{\prime}$ is given by
    \begin{align*}
        \mathsf{comaj}(p^{\prime}) & = \mathsf{comaj}(p) + \sum_{j=1}^m \left( \mathsf{len}(p)+r(j-1) - a_j \right)\\
            &= \mathsf{comaj}(p) -\mathsf{add}(\alpha) + m \cdot \mathsf{len}(p) + r\binom{m}{2},
    \end{align*}
    as desired. This completes the proof of Lemma \ref{lem.statlocalbij}.
\end{proof}

Now, we are ready to prove combinatorially Theorem \ref{thm.KP25-q} and Corollary \ref{cor.qcomajor}, which state the $q$-enumeration formula for the set of Richardson tableaux of a given shape by the major and comajor statistics, respectively. 
\begin{proof}[Proof of Theorem \ref{thm.KP25-q}]
    Since the set of Richardson tableaux of shape $\lambda$ is in bijection with the set of Motzkin paths of shape $\lambda$ (Theorem \ref{thm.Guo} and Section \ref{sec.shapealgo}) and the major statistic of a Richardson tableau is the same as the major statistic of its corresponding Motzkin path (Section \ref{sec.prestat}), it is enough to evaluate the sum $\sum_{p \in \mathcal{M}(\lambda)} q^{\mathsf{maj}(p)}$.
    
    Let $\lambda=(\lambda_1,\dots,\lambda_{\ell})$ be a partition and $\mu = \lambda^t = (\mu_1,\dots, \mu_{\lambda_1})$ be the conjugate of $\lambda$. We apply a sequence of the inverse of the local bijections 
    \begin{equation}\label{eq.pfqbij}
        \left( \psi^{(\ell-1)} \right)^{-1} \circ \left( \psi^{(\ell-2)} \right)^{-1} \circ \cdots \circ \left( \psi^{(1)} \right)^{-1}
    \end{equation}
    to each Motzkin path $p \in \mathcal{M}(\lambda)$. The image of the composition \eqref{eq.pfqbij} is successively given by
    \begin{align*}
        \mathcal{M}(\lambda) &  \to 
        \mathcal{M}(\widetilde{\lambda_1}) \times \mathcal{S}(n_1, m_1) \to \mathcal{M}(\widetilde{\lambda_2}) \times \mathcal{S}(n_2, m_2) \times \mathcal{S}(n_1, m_1) \to \cdots \\
          &\to \mathcal{M}(\widetilde{\lambda_{\ell-1}}) \times \mathcal{S}(n_{\ell-1}, m_{\ell-1}) \times \cdots \times \mathcal{S}(n_1, m_1),
    \end{align*}
    where, for $1 \leq i \leq \ell-1$, $\widetilde{\lambda_i}$ is the partition whose conjugate is $(\mu_1, \dots,\mu_{\lambda_{i+1}})$, the parameter $m_i = \lambda_i - \lambda_{i+1}$ (i.e., the number of unit Motzkin paths $u^{(i)}$ in $p_\mathsf{min}$ under the shape algorithm of $p$), and the parameter $n_i = \lambda_{i+1} + \cdots + \lambda_\ell$ (i.e., the number of unmarked points of the path in the image of $\left( \psi^{(i)} \right)^{-1} \circ \cdots \circ \left( \psi^{(1)} \right)^{-1}$). In particular, if $\lambda_i = \lambda_{i+1}$, that is, $m_i=0$, then $\mathcal{S}(n_i, m_i)$ contains only the empty sequence and the bijection $\psi^{(i)}$ is the identity. For convenience, we write $p^{(i)}$ for the image of $p$ under $\left( \psi^{(i)} \right)^{-1} \circ \cdots \circ \left( \psi^{(1)} \right)^{-1}$. 
  
    In this way, we can obtain the sequences $\alpha_i \in \mathcal{S}(n_i,m_i)$ and the final Motzkin path in $\mathcal{M}(\widetilde{\lambda_{\ell-1}})$ for each $p \in \mathcal{M}(\lambda)$. Notice that the partition $\widetilde{\lambda_{\ell-1}}$ is the rectangle of size $\ell \times \lambda_{\ell}$, so the set $\mathcal{M}(\widetilde{\lambda_{\ell-1}})$ contains only one element:
    \begin{equation*}
        p^{(\ell-1)} = \underbrace{u^{(\ell)} \oplus \cdots \oplus u^{(\ell)}}_{\lambda_\ell}.
    \end{equation*}
    Since the weak valleys of $p^{(\ell-1)}$ are $\ell, 2\ell,\dots,(\lambda_\ell-1)\ell$, we have
    \begin{align}
        \mathsf{comaj}\left( p^{(\ell-1)} \right) & = \ell+2\ell + \cdots + (\lambda_\ell-1)\ell = \ell \cdot \dfrac{\lambda_\ell(\lambda_\ell - 1)}{2}, \label{eq.comajofp0} \\
        \mathsf{maj}\left( p^{(\ell-1)} \right) & = 1 + 2 + \cdots + (\lambda_\ell \cdot \ell -1) - \mathsf{comaj}\left( p^{(\ell-1)} \right) = \lambda_\ell^2 \cdot \dfrac{\ell(\ell-1)}{2}. \label{eq.majofp0}
    \end{align}
    We also observe that the length of the path $p^{(i)}$ is given by
    \begin{equation}\label{eq.lengthpi}
        \mathsf{len}\left( p^{(i)} \right) = \sum_{j=i+1}^\ell\lambda_j + i\lambda_{i+1}.
    \end{equation}

    We then apply Lemma \ref{lem.statlocalbij} to each composition in \eqref{eq.pfqbij} and obtain
    \begin{equation}\label{eq.pfmajcomposition}
        \mathsf{maj}(p) = \mathsf{maj}\left( p^{(\ell-1)} \right) + \sum_{i=1}^{\ell-1} \mathsf{add}(\alpha_i) + \sum_{i=1}^{\ell-1} m_i(i-1)\left( \mathsf{len}\left( p^{(i)} \right) + \dfrac{im_i}{2} \right).
    \end{equation}
    Note that the last summation in \eqref{eq.pfmajcomposition} simplifies to
    \begin{align}
        \sum_{i=1}^{\ell-1} m_i(i-1)\bigg( \mathsf{len}\left( p^{(i)} \right) + \dfrac{im_i}{2} \bigg)
            &= \sum_{i=1}^{\ell-1} (\lambda_i - \lambda_{i+1})(i-1) \bigg( \sum_{j=i+1}^\ell\lambda_j + \dfrac{i}{2} (\lambda_{i+1} +\lambda_i) \bigg) \nonumber \\
            &= \sum_{i=1}^{\ell-1}(i-1) \bigg( \sum_{j=i+1}^{\ell}\lambda_i\lambda_j  - \sum_{j=i+1}^{\ell}\lambda_{i+1}\lambda_j +\dfrac{i}{2} (\lambda_i^2 - \lambda_{i+1}^2)  \bigg) \nonumber\\
            &= \sum_{i=2}^{\ell-1}\bigg((i-1) \sum_{j=i+1}^{\ell}\lambda_i\lambda_j  - (i-2)\sum_{j=i}^{\ell}\lambda_{i}\lambda_j +\dfrac{i(i-1)}{2} \lambda_i^2 - \dfrac{(i-1)(i-2)}{2} \lambda_{i}^2 \bigg) \nonumber\\
            &\quad - (\ell-2) \lambda_\ell^2 - \dfrac{(\ell-1)(\ell-2)}{2} \lambda_\ell^2 \nonumber\\
            &= \sum_{i=2}^{\ell-1}\bigg( \sum_{j=i+1}^{\ell}\lambda_i\lambda_j  - (i-2)\lambda_i^2 +  \dfrac{i(i-1)}{2}\lambda_i^2 - \dfrac{(i-1)(i-2)}{2}\lambda_i^2 \bigg) \nonumber\\
            &\quad -(\ell - 2) \lambda_\ell^2 - \dfrac{(\ell-1)(\ell-2)}{2} \lambda_\ell^2 \nonumber \\
            &= \sum_{i=2}^{\ell-1} \sum_{j=i}^{\ell}\lambda_i\lambda_j - \dfrac{(\ell+1)(\ell-2)}{2} \lambda_\ell^2, \label{eq.pfmajlastsum}
    \end{align}
    where the fourth equality is obtained from the telescoping technique. Plugging \eqref{eq.majofp0} and \eqref{eq.pfmajlastsum} into \eqref{eq.pfmajcomposition}, we obtain
    \begin{align}
        \mathsf{maj}(p) &= \dfrac{\ell(\ell-1)}{2} \lambda_\ell^2 + \sum_{i=1}^{\ell-1}\mathsf{add}(\alpha_i) + \sum_{i=2}^{\ell-1} \sum_{j=i}^{\ell}\lambda_i\lambda_j - \dfrac{(\ell+1)(\ell-2)}{2} \lambda_\ell^2 \nonumber \\
            &= \sum_{i=1}^{\ell-1}\mathsf{add}(\alpha_i) + \sum_{i=2}^{\ell} \sum_{j=i}^{\ell}\lambda_i\lambda_j. \label{eq.pfmajorcombine}
    \end{align}
    
    It is a well-known result (see, for example, \cite[Theorem 3.1]{Andrews76}) that the $q$-enumeration formula for the set $\mathcal{S}(n_i,m_i)$ by $\mathsf{add}$ is 
    \begin{equation} \label{eq.qadd}
        \sum_{\alpha_i \in \mathcal{S}(n_i,m_i)} q^{\mathsf{add}(\alpha_i)} = \begin{bmatrix}
            n_i+m_i \\ n_i
        \end{bmatrix}_q = \begin{bmatrix}
            \lambda_i + \lambda_{i+2}+\cdots + \lambda_{\ell} \\ \lambda_{i+1}+\cdots + \lambda_{\ell}
        \end{bmatrix}_q.
    \end{equation}
    Finally, with \eqref{eq.pfmajorcombine} and \eqref{eq.qadd}, we obtain the desired formula
    \begin{align*}
        \sum_{T \in \mathcal{R}(\lambda)} q^{\mathsf{maj}(T)} &= \sum_{p \in \mathcal{M}(\lambda)}q^{\mathsf{maj}(p)}\\
            &= q^{\sum_{2 \le i \le j \le \ell}\lambda_i \lambda_j} \prod_{i=1}^{\ell-1}  \sum_{\alpha_i \in \mathcal{S}(n_i, m_i)}q^{\mathsf{add}(\alpha_i)}\\
            &= q^{\sum_{2 \le i \le j \le \ell}\lambda_i \lambda_j} \prod_{i=1}^{\ell-1}  \begin{bmatrix}
            \lambda_i + \lambda_{i+2} + \cdots + \lambda_\ell \\
            \lambda_{i+1} + \lambda_{i+2} +\cdots + \lambda_\ell
        \end{bmatrix}_q.
    \end{align*}
    This completes the proof of Theorem \ref{thm.KP25-q}.
\end{proof}

\begin{proof}[Proof of Corollary \ref{cor.qcomajor}]
    It is equivalent to evaluate $\sum_{p \in \mathcal{M}(\lambda)} q^{\mathsf{comaj}(p)}$. By the same idea as in the proof of Theorem \ref{thm.KP25-q} and Lemma \ref{lem.statlocalbij}, we obtain
    \begin{align}
        \mathsf{comaj}(p) & = \mathsf{comaj}\left( p^{(\ell-1)} \right) - \sum_{i=1}^{\ell-1}\mathsf{add}(\alpha_i) + \sum_{i=1}^{\ell-1} \left( m_i \cdot \mathsf{len}\left( p^{(i)} \right)+ i\binom{m_i}{2} \right). \nonumber\\ 
        & = \mathsf{comaj}\left( p^{(\ell-1)} \right) + \sum_{i=1}^{\ell-1} (n_im_i - \mathsf{add}(\alpha_i))+ \sum_{i=1}^{\ell-1} \bigg(m_i \cdot \mathsf{len}\left( p^{(i)} \right)  + i\binom{m_i}{2} - n_im_i \bigg). \label{eq.pfcomaj}
    \end{align}
    The last summation in \eqref{eq.pfcomaj} simplifies to
    \begin{align}
         \sum_{i=1}^{\ell-1} \bigg(m_i \cdot \mathsf{len}\left( p^{(i)} \right)  + i\binom{m_i}{2} - n_im_i \bigg) &= \sum_{i=1}^{\ell-1} (\lambda_i-\lambda_{i+1})\left( \sum_{j=i+1}^{\ell} \lambda_j + i\lambda_{i+1} +i\frac{\lambda_i-\lambda_{i+1}-1}{2} - \sum_{j=i+1}^{\ell} \lambda_j \right) \nonumber \\
         & = \sum_{i=1}^{\ell-1}(\lambda_i-\lambda_{i+1}) \cdot \dfrac{i}{2} \cdot (\lambda_i+\lambda_{i+1}-1) \nonumber \\
         & = \dfrac{1}{2}\sum_{i=1}^{\ell-1} \left( i(\lambda_{i}^2 - \lambda_{i+1}^2) - i(\lambda_i - \lambda_{i+1}) \right) \nonumber \\
         &= \dfrac{1}{2}\sum_{i=1}^{\ell-1} (\lambda_{i}^2 - \lambda_{i}) -\dfrac{\ell-1}{2}(\lambda_\ell^2 - \lambda_\ell) \nonumber \\
         &= \dfrac{1}{2}\sum_{i=1}^{\ell-1} \lambda_{i}(\lambda_{i}-1) -\dfrac{\ell-1}{2}\lambda_\ell(\lambda_\ell-1), \label{eq.pflastcomajor}
    \end{align}
    where the fourth equality is again obtained from the telescoping technique.
    
    For any $\alpha = (a_1,\dots,a_m) \in \mathcal{S}(n,m)$, define $a_i^\prime = n-a_{m-i+1}$ and set $\alpha^{\prime} = (a_1^{\prime},\dots,a_m^{\prime})$. It is straightforward to check that $\alpha^{\prime} \in \mathcal{S}(n,m)$ and $\alpha \mapsto \alpha^{\prime}$ is a bijection. We also have
    \begin{equation}\label{eq.alphabij}
        \mathsf{add}(\alpha^\prime) = nm - \mathsf{add}(\alpha).
    \end{equation}
    Plugging \eqref{eq.comajofp0}, \eqref{eq.pflastcomajor}, and \eqref{eq.alphabij} into \eqref{eq.pfcomaj}, we then obtain
    \begin{align}\label{eq.pfcomajorcombine}
        \mathsf{comaj}(p) & = \ell \cdot \dfrac{\lambda_\ell(\lambda_\ell - 1)}{2} + \sum_{i=1}^{\ell-1} \mathsf{add}(\alpha_i^\prime) + \dfrac{1}{2}\sum_{i=1}^{\ell-1} \lambda_{i}(\lambda_{i}-1) -\dfrac{\ell-1}{2}\lambda_\ell(\lambda_\ell-1)  \nonumber \\
        & = \sum_{i=1}^{\ell-1} \mathsf{add}(\alpha_i^\prime) +\dfrac{1}{2}\sum_{i=1}^{\ell} \lambda_{i}(\lambda_{i}-1).
    \end{align}
    Therefore, by \eqref{eq.pfcomajorcombine} and \eqref{eq.qadd},
    \begin{align*}
        \sum_{T \in \mathcal{R}(\lambda)} q^{\mathsf{comaj}(T)} &= \sum_{p \in \mathcal{M}(\lambda)} q^{\mathsf{comaj}(p)}\\
            &= q^{\frac{1}{2}\sum_{i=1}^{\ell} \lambda_{i}(\lambda_{i}-1)} \prod_{i=1}^{\ell-1}  \sum_{\alpha_i \in \mathcal{S}(n_i, m_i)}q^{\mathsf{add}(\alpha_i^\prime)}\\
            &= q^{\frac{1}{2}\sum_{i=1}^\ell \lambda_i (\lambda_i-1)} \prod_{i=1}^{\ell-1}  \begin{bmatrix}
                \lambda_i + \lambda_{i+2} + \cdots + \lambda_\ell \\
                \lambda_{i+1} + \lambda_{i+2} +\cdots + \lambda_\ell
            \end{bmatrix}_q.
    \end{align*}
    This finishes the proof of Corollary \ref{cor.qcomajor}.
\end{proof}

\section{Proofs of Guo's conjecture}\label{sec.Guoconj}

In this section, we provide two proofs of Guo's conjecture (Theorem \ref{thm.Guoconj}). The first one considers generalized Motzkin paths and identifies them with specific permutations. This identification transforms the comajor statistic into the standard major statistic on permutations, allowing us to complete the proof by applying Stanley's shuffle theorem \cite{Stan72}. The second one is bijective; we can obtain the set of Motzkin paths of given length and number of horizontal steps by carefully inserting horizontal steps into the set of Dyck paths. This insertion operation is different from the local bijection presented in Section \ref{sec.localbij}; see Remark \ref{rmk.difference}.

\subsection{The first proof}\label{sec.pfGuoshuffle}

Let $a,b,c$ be nonnegative integers. We consider a more general lattice path starting from the origin and ending at $(a+b+c,a-c)$ with $a$ up steps $(1,1)$ (the $U$ step), $b$ horizontal steps $(1,0)$ (the $H$ step), and $c$ down steps $(1,-1)$ (the $D$ step). Let $\mathcal{G}(a,b,c)$ be the collection of such lattice paths. By convention, $\mathcal{G}(0,0,0)$ consists of the lattice path of length $0$. We write $\mathcal{G}^{+}(a,b,c)$ for the subset of $\mathcal{G}(a,b,c)$ consisting of lattice paths that stay weakly above the $x$-axis, and define $\mathcal{G}^{-}(a,b,c) = \mathcal{G}(a,b,c) \setminus \mathcal{G}^{+}(a,b,c)$. In particular, the set of Motzkin paths of length $n$ is a special case of $\mathcal{G}^{+}(a,b,c)$ with $a+b+c=n$ and $a=c$. 

We have the following lemma about the generating function of the generalized Motzkin paths weighted by the comajor statistic (Section \ref{sec.prestat}).
\begin{lemma}\label{lem.gmgf}
    Let $a,b,c$ be nonnegative integers. Then the generating function of $\mathcal{G}(a,b,c)$ weighted by the comajor statistic is given by
    \begin{equation}\label{eq.gmgf}
        \sum_{p \in \mathcal{G}(a,b,c)} q^{\mathsf{comaj}(p)} = q^{\binom{b}{2}} \begin{bmatrix}
            a+b+c \\ a,b,c
        \end{bmatrix}_q .
    \end{equation}
\end{lemma}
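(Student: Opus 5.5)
We will encode each path in $\mathcal{G}(a,b,c)$ as a permutation, chosen so that the weak valleys of the path are exactly the descents of the permutation. Then $\mathsf{comaj}$ on paths becomes $\mathsf{maj}$ on a family of shuffles, and we can apply Stanley's shuffle theorem.

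\emph{Encoding.} Let $p\in\mathcal{G}(a,b,c)$ be written as a word in $U,H,D$ of length $n=a+b+c$. Define $\sigma(p)\in\mathfrak{S}_n$ as follows.
\begin{itemize}
\item Replace the $U$'s, from left to right, by $1,2,\dots,a$ (increasing).
\item Replace the $H$'s, from left to right, by $a+b,a+b-1,\dots,a+1$ (decreasing).
\item Replace the $D$'s, from left to right, by $a+b+1,\dots,a+b+c$ (increasing).
\end{itemize}
So the letter values satisfy $U<H<D$.

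Now compare adjacent letters. A pair $DU$, $DH$ or $HU$ is a descent because $D>H>U$. A pair $HH$ is a descent because the $H$-labels decrease. A pair $UU$ or $DD$ is an ascent because those labels increase. A pair $UD$, $UH$ or $HD$ is an ascent. Hence $\mathsf{Val}(p)=\mathsf{Des}(\sigma(p))$, and therefore $\mathsf{comaj}(p)=\mathsf{maj}(\sigma(p))$.

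The map $p\mapsto\sigma(p)$ is a bijection from $\mathcal{G}(a,b,c)$ onto the set of shuffles of three words on disjoint alphabets:
\begin{itemize}
\item $u=12\cdots a$, with $\mathsf{maj}(u)=0$;
\item $w=(a+b)(a+b-1)\cdots(a+1)$, with $\mathsf{maj}(w)=\binom{b}{2}$;
\item $v=(a+b+1)\cdots(a+b+c)$, with $\mathsf{maj}(v)=0$.
\end{itemize}
No positivity constraint is involved, since the lemma concerns all of $\mathcal{G}(a,b,c)$.

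\emph{Shuffle theorem.} We use the following consequence of Stanley's shuffle theorem \cite{Stan72}, obtained by summing his refined formula over the number of descents via the $q$-Vandermonde identity. If $x$ and $y$ are permutations of disjoint alphabets, of lengths $s$ and $t$, then
\[
\sum_{z\in x\sqcup\!\sqcup\, y} q^{\mathsf{maj}(z)} = q^{\mathsf{maj}(x)+\mathsf{maj}(y)}\begin{bmatrix} s+t\\ s\end{bmatrix}_q .
\]
Here $\mathsf{maj}$ is computed with respect to the natural order of values, so the alphabets need not be intervals.

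We apply this in two stages.
\begin{itemize}
\item Every shuffle of $u$, $v$ and $w$ arises exactly once as a shuffle of $w$ with some $y\in u\sqcup\!\sqcup\, v$.
\item First, $\sum_{y\in u\sqcup\!\sqcup\, v} q^{\mathsf{maj}(y)}=\begin{bmatrix} a+c\\ a\end{bmatrix}_q$.
\item Second, for each fixed $y$, shuffling with $w$ gives $q^{\mathsf{maj}(y)+\binom b2}\begin{bmatrix} n\\ b\end{bmatrix}_q$.
\end{itemize}
Summing over $y$ yields
\[
q^{\binom b2}\begin{bmatrix} n\\ b\end{bmatrix}_q\begin{bmatrix} a+c\\ a\end{bmatrix}_q=q^{\binom b2}\begin{bmatrix} a+b+c\\ a,b,c\end{bmatrix}_q,
\]
which is \eqref{eq.gmgf}.

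\emph{Main obstacle.} The delicate point is choosing the relabelling so that the $HH$ pattern, which is a weak valley, is a strict descent while $UU$ and $DD$ are not. This is what forces the $H$-word to be decreasing and is the source of the factor $q^{\binom b2}$. The second point to justify carefully is the summed form of the shuffle theorem for arbitrary words, not only increasing ones. If a precise reference for that form is unavailable, the fallback is to derive it from Stanley's refined statement by the $q$-Vandermonde convolution.
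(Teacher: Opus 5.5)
Your proof is correct and follows essentially the same route as the paper. It uses the identical relabelling ($U$'s increasing from $1$, $H$'s decreasing from $a+b$, $D$'s increasing from $a+b+1$), so that weak valleys become descents, and then applies Stanley's shuffle theorem twice; the only cosmetic difference is that you shuffle the $U$- and $D$-words first and insert the decreasing $H$-word last, whereas the paper shuffles the $U$- and $H$-words first and then the $D$-word (the paper also quotes the summed form of the shuffle theorem directly, so your $q$-Vandermonde fallback is not needed).
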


To prove Lemma \ref{lem.gmgf}, we recall the notion of a shuffle of two permutations and Stanley's shuffle theorem. Let $m$ and $n$ be two positive integers. We write $\mathfrak{B}_n$ for the set of permutations of $n$ distinct numbers, not necessarily from $1$ to $n$. Let $\pi \in \mathfrak{B}_n$ and $\sigma \in \mathfrak{B}_m$ be two disjoint permutations (numbers in $\pi$ and $\sigma$ are distinct). We say $\tau \in \mathfrak{B}_{n+m}$ is a \textit{shuffle} of $\pi$ and $\sigma$ if $\pi$ and $\sigma$ are subsequences of $\tau$. Denote by $\mathfrak{S}(\pi,\sigma)$ the set of shuffles of $\pi$ and $\sigma$. It is clear that $|\mathfrak{S}(\pi,\sigma)| = \binom{n+m}{n}$. Stanley's classical shuffle theorem \cite{Stan72} is stated below.
\begin{theorem}[\cite{Stan72}]\label{thm.Stanleyshuffle}
    Let $m$ and $n$ be two positive integers, and $\pi \in \mathfrak{B}_n$ and $\sigma \in \mathfrak{B}_m$ be two disjoint permutations. Then
    \begin{equation}
        \sum_{\tau \in \mathfrak{S}(\pi,\sigma)} q^{\mathsf{maj}(\tau)} = q^{\mathsf{maj}(\pi) + \mathsf{maj}(\sigma)} \begin{bmatrix}
            n+m \\
            m
        \end{bmatrix}_q.
    \end{equation}
\end{theorem}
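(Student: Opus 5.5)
The plan is the standard $P$-partition argument: express both sides as generating functions of weakly decreasing integer labelings, and match them through a bijection that merges a labeling of $\pi$ with a labeling of $\sigma$.

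\emph{Step 1 (one word).} For a word $w=w_1\cdots w_N$ of distinct integers, call $f\colon[N]\to\mathbb{Z}_{\ge 0}$ \emph{$w$-compatible} if
\[
f(1)\ge f(2)\ge\cdots\ge f(N), \qquad f(i)>f(i+1) \text{ whenever } i\in\mathsf{Des}(w).
\]
I will show that
\[
\sum_{f}q^{f(1)+\cdots+f(N)}=\frac{q^{\mathsf{maj}(w)}}{(1-q)(1-q^2)\cdots(1-q^N)}.
\]
Set $g(j)=f(j)-|\{i\in\mathsf{Des}(w): i\ge j\}|$. This turns compatible $f$ bijectively into arbitrary weakly decreasing nonnegative sequences $g$. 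Each descent $i$ is subtracted at the $i$ positions $j\le i$, so the total shift is $\sum_{i\in\mathsf{Des}(w)}i=\mathsf{maj}(w)$. The weakly decreasing sequences are partitions with at most $N$ parts, which gives the denominator.

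\emph{Step 2 (merging bijection).} Encode a labeling as a function $h$ on the letters rather than on the positions. I claim there is a bijection between
\begin{itemize}
\item pairs $(f,g)$ with $f$ $\pi$-compatible and $g$ $\sigma$-compatible, and
\item pairs $(\tau,h)$ with $\tau\in\mathfrak{S}(\pi,\sigma)$ and $h$ $\tau$-compatible,
\end{itemize}
which preserves the total sum of labels. Forward direction: $h$ is the union of $f$ and $g$ on the disjoint alphabets. Then $\tau$ lists all letters by decreasing $h$-value, and within a block of equal values in increasing numerical order. This makes $h$ weakly decreasing along $\tau$ with no descent inside a tie block, so $h$ is $\tau$-compatible.

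To see that $\tau$ is a shuffle, take two letters of $\pi$. If their $h$-values differ, $\tau$ orders them as $\pi$ does, because $f$ is weakly decreasing along $\pi$. If their values are equal, they lie in a run of $\pi$ on which $f$ is constant. Such a run contains no descent of $\pi$, so it is increasing, which agrees with the tie-breaking rule. The same argument applies to $\sigma$.

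Uniqueness: if $h$ is $\tau'$-compatible, then $\tau'$ must list letters by decreasing $h$-value. Inside a block of equal values $\tau'$ has no descents, so the block is increasing. Hence $\tau'=\tau$. The inverse map simply restricts $h$ to the letters of $\pi$ and of $\sigma$.

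\emph{Step 3 (conclusion).} Summing $q^{|h|}$ over both sides of the bijection and applying Step 1 to each word gives
\[
\sum_{\tau\in\mathfrak{S}(\pi,\sigma)}\frac{q^{\mathsf{maj}(\tau)}}{(q;q)_{n+m}}=\frac{q^{\mathsf{maj}(\pi)}}{(q;q)_n}\cdot\frac{q^{\mathsf{maj}(\sigma)}}{(q;q)_m}.
\]
Multiplying through by $(q;q)_{n+m}$ turns the right-hand side into $q^{\mathsf{maj}(\pi)+\mathsf{maj}(\sigma)}\begin{bmatrix} n+m\\ m\end{bmatrix}_q$, which is the theorem.

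The main obstacle is the tie-breaking in Step 2. The argument depends on two facts: a run of constant labels inside $\pi$ or $\sigma$ is increasing, and increasing order is the only arrangement of a tie block in $\tau$ without descents. Both must be checked carefully. If this convention does not go through, the fallback is induction on $n+m$, conditioning on whether the last letter of $\tau$ comes from $\pi$ or from $\sigma$.
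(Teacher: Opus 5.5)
Your proof is correct. Note that the paper gives no proof of this statement: it is quoted from \cite{Stan72} and used as a black box, twice, in the proof of Lemma \ref{lem.gmgf}. What you supply is the classical $P$-partition argument, which is the framework in which the shuffle theorem originally appeared. Step 1 is the generating function for labelings compatible with a single chain. Step 2 is the fundamental decomposition of the labelings of the disjoint union of two chains according to the unique shuffle (linear extension) each labeling is compatible with. Step 3 divides out the common denominator. Your tie-breaking analysis is right: the letters of $\pi$ with a given $h$-value form a descent-free, hence increasing, factor of $\pi$, and a descent-free block of $\tau$ must be increasing.

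The one step you assert without checking is that restriction inverts the merge. You need that if $\tau\in\mathfrak{S}(\pi,\sigma)$ and $h$ is $\tau$-compatible, then $h$ restricted to the letters of $\pi$ is $\pi$-compatible. Weak decrease passes to subsequences. Strictness at a descent $\pi_i>\pi_{i+1}$ follows from your own run argument: if $h(\pi_i)=h(\pi_{i+1})$, then $h$ is constant on the whole factor of $\tau$ from $\pi_i$ to $\pi_{i+1}$. That factor therefore has no descents and is increasing, which forces $\pi_i<\pi_{i+1}$. With this, restricting after merging is trivially the identity, and merging after restricting is the identity by your uniqueness claim.

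Compared with citing the theorem, your route makes the paper self-contained. Since Step 1 also holds for $N=0$, it covers the case of an empty word. The stated theorem excludes that case ($m,n$ positive), but the proof of Lemma \ref{lem.gmgf} tacitly uses it when $a$, $b$ or $c$ is $0$.
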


Now, we are ready to prove Lemma \ref{lem.gmgf}.
\begin{proof}[Proof of Lemma \ref{lem.gmgf}]
    We define a function $f: \mathcal{G}(a,b,c) \rightarrow \mathfrak{S}_{a+b+c}$ by sending a generalized Motzkin path $p$ to a permutation as follows. From left to right, we replace each $U$ step in $p$ with $1,2,\dots,a$, replace each $H$ step in $p$ with $a+b,a+b-1,\dots,a+2,a+1$, and replace each $D$ step in $p$ with $a+b+1,a+b+2,\dots,a+b+c$. This gives a permutation $f(p)$ on $[a+b+c]$. The key observation is that $i \in [a+b+c-1]$ is a weak valley (i.e., an index of the occurrence of $HH$, $DU$, $DH$, and $HU$) of $p$ if and only if $i$ is a descent of the permutation $f(p)$. Thus, the comajor statistic of $p$ equals the usual major statistic of $f(p)$, that is, $\mathsf{comaj}(p) = \mathsf{maj}(f(p))$. So the left-hand side of \eqref{eq.gmgf} can be written as
    \begin{equation}\label{eq.gmgf2}
        \sum_{p \in \mathcal{G}(a,b,c)} q^{\mathsf{comaj}(p)} = \sum_{\pi \in f \left( \mathcal{G}(a,b,c) \right)} q^{\mathsf{maj}(\pi)}. 
    \end{equation}

    Notice that the set of permutations $f(\mathcal{G}(a,b,c))$ can be obtained from shuffling permutations in two stages: (1) shuffle the permutations $\pi_1 = 1,2,\dots,a$ and $\sigma_1 = a+b,a+b-1,\dots,a+1$; and (2) take $\pi_2 \in \mathfrak{S}(\pi_1,\sigma_1)$, then shuffle $\pi_2$ and $\sigma_2=a+b+1,a+b+2,\dots,a+b+c$. To evaluate \eqref{eq.gmgf2}, we apply Stanley's shuffle theorem (Theorem \ref{thm.Stanleyshuffle}) twice. For the first stage, we note that $\mathsf{maj}(\pi_1)=0$ and $\mathsf{maj}(\sigma_1)=\binom{b}{2}$. By Theorem \ref{thm.Stanleyshuffle}, we obtain the generating function for $\mathfrak{S}(\pi_1,\sigma_1)$,
    \begin{equation}\label{eq.gmgf3}
        \sum_{ \tau \in \mathfrak{S}(\pi_1,\sigma_1)}q^{\mathsf{maj}(\tau)} = q^{\binom{b}{2}} \begin{bmatrix}
            a+b \\ b
        \end{bmatrix}_q.
    \end{equation}
    For the second stage, note that $\mathsf{maj}(\sigma_2) = 0$. Then Theorem \ref{thm.Stanleyshuffle} implies that
        \begin{equation}\label{eq.gmgf4}
            \sum_{\tau \in \mathfrak{S}(\pi_2, \sigma_2)} q^{\mathsf{maj}(\tau)}  = q^{\mathsf{maj}(\pi_2)} \begin{bmatrix}
                a+b+c \\ c
            \end{bmatrix}_q.
        \end{equation}  

    Finally, we obtain
    \begin{align}
        \sum_{\pi \in f \left( \mathcal{G}(a,b,c) \right)} q^{\mathsf{maj}(\pi)} & = \sum_{\pi_2 \in \mathfrak{S}(\pi_1,\sigma_1)} \sum_{\tau \in \mathfrak{S}(\pi_2,\sigma_2)} q^{\mathsf{maj}(\tau)} \nonumber \\
         & = \sum_{\pi_2 \in \mathfrak{S}(\pi_1,\sigma_1)} q^{\mathsf{maj}(\pi_2)} \begin{bmatrix}
            a+b+c \\ c
        \end{bmatrix}_q \label{eq.gmgf5}\\
        & = q^{\binom{b}{2}} \begin{bmatrix}
            a+b \\ b
        \end{bmatrix}_q
        \begin{bmatrix}
            a+b+c \\ c
        \end{bmatrix}_q \label{eq.gmgf6} \\
        & = q^{\binom{b}{2}} \begin{bmatrix}
            a+b+c \\ a,b,c
        \end{bmatrix}_q, \nonumber
    \end{align}
    where Equations \eqref{eq.gmgf5} and \eqref{eq.gmgf6} follow from \eqref{eq.gmgf4} and \eqref{eq.gmgf3}, respectively. This completes the proof of Lemma \ref{lem.gmgf}.  
\end{proof}

We need the following lemma for Motzkin paths, which is similar to the one presented in \cite[Page 255]{FH85} for Dyck paths. 
\begin{lemma}\label{lem.reflection}
    For $a \geq c$, there exists a bijection $\phi:\mathcal{G}^{-}(a,b,c) \rightarrow \mathcal{G}(a+1,b,c-1)$ such that $\mathsf{comaj}(\phi(p)) = \mathsf{comaj}(p) -1$ for all $p \in \mathcal{G}^{-}(a,b,c)$.
\end{lemma}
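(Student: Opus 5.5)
The plan is a reflection-type bijection in the spirit of the Dyck path argument of \cite[Page 255]{FH85}: modify $p$ near its first passage below the $x$-axis, and track the weak valleys exactly.

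\textbf{Decomposition and candidate map.} Let $p\in\mathcal{G}^{-}(a,b,c)$. Since $p$ goes strictly below the $x$-axis, it has a first step that reaches the line $y=-1$, and this step is a $D$ step. So I write
\[
p=w\,D\,v,
\]
where $w$ is a path from height $0$ to height $0$ staying weakly above the $x$-axis (possibly empty), and $v$ is arbitrary. As a first try, I set $\phi(p)$ to be the path obtained from $p$ by reflecting the prefix $wD$ across the $x$-axis, i.e. interchanging $U$ and $D$ in $wD$. Equivalently, up to rearrangement, the distinguished $D$ becomes a $U$ and the prefix $w$ is replaced by a path that stays weakly below the axis and returns to it.
\begin{itemize}
\item This prefix has net height change $-1$, so after the flip the numbers of up and down steps become $a+1$ and $c-1$, and the number $b$ of $H$ steps is unchanged.
\item For the inverse, a path in $\mathcal{G}(a+1,b,c-1)$ ends at height $a-c+2\ge 2$. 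Hence it has a first step reaching $y=1$, and I flip back the prefix ending at that step. This is the usual reflection argument and yields a bijection $\mathcal{G}^{-}(a,b,c)\to\mathcal{G}(a+1,b,c-1)$ with no positivity condition on the image.
\end{itemize}

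\textbf{Comajor bookkeeping.} Weak valleys are $HH,DU,DH,HU$ and weak peaks are $UD,UU,DD,UH,HD$. A naive $U\leftrightarrow D$ swap does not preserve these patterns (for instance $DU$ becomes $UD$), so plain reflection changes $\mathsf{comaj}$ uncontrollably. I therefore expect to modify the flipped prefix further: replace the flipped $w$ by its reverse. Reversal followed by $U\leftrightarrow D$ interchange sends $xy$ to $\bar y\bar x$, which maps $DU\mapsto DU$, $DH\mapsto HU$, $HU\mapsto DH$ and $HH\mapsto HH$, so it preserves the set of weak valley patterns. Under this reversal a weak valley of $w$ at index $i$ moves to index $\mathsf{len}(w)-i$, so this alone does not keep $\mathsf{comaj}$ fixed. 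To compensate, I would move the distinguished step from the end of the prefix to its front, i.e. take
\[
\phi(p)=U\,\overline{w}^{\,\mathrm{rev}}\,v ,
\]
or a similar cyclic rearrangement, and then compute the valleys at the three junctions: the start of the prefix, the boundary between the prefix and $v$, and the pair involving the moved step.

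\textbf{Main obstacle.} The hard step is choosing the precise local rule so that the total change is exactly $-1$. The pairs straddling the boundary between the modified prefix and $v$ are the delicate ones: the last step of $w$ is $D$ or $H$ (since $w\ge 0$ ends at $0$), and the first step of $v$ may be $U$, $H$ or $D$. I would first check small cases ($b=0$, where it should reduce to the Dyck path case of \cite{FH85}, and $\mathsf{len}(p)\le 4$) to pin down the correct rearrangement of the prefix. Then I would do a case analysis on the last step of $w$ and the first step of $v$, showing that the internal valleys of the prefix contribute the same total before and after, and that the boundary valley index drops by exactly one. This is analogous to the ``interchange decreases the weak-valley index by $1$'' argument in the proof of Lemma \ref{lem.statlocalbij}. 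Once the rule is fixed, bijectivity follows from recovering the first passage to $y=1$ in the image, as described above.
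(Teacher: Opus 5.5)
Your proposal has a genuine gap: the map $\phi$ is never actually fixed, and the candidates you write down do not work. Plain reflection of the prefix $wD$ fails, as you suspect. For $p=UDDU\in\mathcal{G}^{-}(2,0,2)$ it gives $DUUU$, and $\mathsf{comaj}$ drops from $3$ to $1$.

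Your repair $\phi(p)=U\,\overline{w}^{\,\mathrm{rev}}\,v$ fails too. It always begins with $U$, so it cannot be surjective. For $(a,b,c)=(1,1,1)$ it sends both $HDU$ and $DHU$ to $UHU$ and misses $HUU$. On $UDDU$ it gives $UUDU$, whose comajor is still $3$.

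The underlying problem is that any rule which reflects or reverses the whole prefix $w$ moves the interior weak valleys of $w$ from index $i$ to $\mathsf{len}(w)-i$ (up to a fixed shift). So the change in $\mathsf{comaj}$ depends on the entire valley set of $w$, not just on the junctions, and a junction-by-junction case analysis cannot cancel it.

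The first passage to $y=-1$ is also the wrong place to flip a single step. There the step after the distinguished $D$ may itself be $D$. For example, flipping the first step of $DDUU$ gives $UDUU$ with the same comajor $2$.

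The missing idea, which is what the paper uses, is to act at the \emph{leftmost lowest point} of $p$. You flip only the single $D$ step entering that point and leave every other step in place.
\begin{itemize}
\item The step before this $D$ (if any) must be $H$ or $D$. A $U$ there would create an earlier point at the minimum height.
\item The step after it must be $U$ or $H$, by minimality. It exists because $a\ge c$ puts the endpoint at height $a-c\ge 0$, strictly above the minimum.
\end{itemize}
If this $D$ is the $j$th step, flipping it to $U$ turns the weak valley $DU$ or $DH$ at index $j$ into a weak peak. It also turns the weak peak $HD$ or $DD$ at index $j-1$ into the weak valley $HU$ or $DU$; for $j=1$ only the loss at index $1$ occurs. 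All other adjacent pairs are untouched, so $\mathsf{comaj}$ drops by exactly $1$.

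The inverse flips the $U$ step leaving the rightmost lowest point of $p'\in\mathcal{G}(a+1,b,c-1)$. That point is not the endpoint, because $p'$ ends at height $a-c+2\ge 2$. Under $\phi$, the point just before the flipped step becomes the rightmost minimum, and conversely. So bijectivity comes from this leftmost/rightmost-minimum correspondence, and no reflection of a prefix is needed.
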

\begin{proof}
    The bijection $\phi$ is constructed below. Given a path $p \in \mathcal{G}^{-}(a,b,c)$, find the lowest lattice point of $p$; if there is more than one such point, then we take the leftmost one and call it $v$. Let $u$ be the lattice point before $v$ on $p$. Note that the step $uv$ must be a $D$ step. We write $p_1$ (resp., $p_2$) for the subpath of $p$ before (resp., after) this $D$ step. Then the map $\phi$ sends $p_1 \oplus D \oplus p_2$ to $p_1 \oplus U \oplus p_2$. It is clear that $\phi(p) \in \mathcal{G}(a+1,b,c-1)$ and the index of the weak valley involving this $D$ step decreases by $1$ under $\phi$. Thus, $\mathsf{comaj}(\phi(p)) = \mathsf{comaj}(p) -1$. The condition $a \geq c$ implies that $v$ is not the ending point of the path, which makes the above argument valid.

    It is easy to see that $\phi$ is a bijection: we start with $p^{\prime} \in \mathcal{G}(a+1,b,c-1)$, find the lowest lattice point on $p^{\prime}$; if there is more than one such point, then we take the rightmost one and call it $v^{\prime}$. Let $u^{\prime}$ be the lattice point after $v^{\prime}$ on $p^{\prime}$. It is clear that the step $v^{\prime}u^{\prime}$ is a $U$ step. The inverse map is then given by flipping this $U$ step into a $D$ step on $p^{\prime}$. This completes the proof of Lemma \ref{lem.reflection}. 
\end{proof}

Recall that Guo's conjecture (Theorem \ref{thm.Guoconj}) provides the formula for the generating function of Richardson tableaux $R(n,k)$ of size $n$ and with $k$ odd columns; its proof is presented below.
\begin{proof}[The first proof of Theorem \ref{thm.Guoconj}.]
    Recall from Section \ref{sec.preRS} that, under the RS algorithm, the number of fixed points of an involution (equivalently, the number of $H$ steps of its corresponding Motzkin path) is the same as the number of odd columns in the insertion tableau. By Theorem \ref{thm.Guo}, the set $R(n,k)$ is in bijection with $\mathcal{I}(n,k)$ (equivalently, $\mathcal{M}(n,k)$). By Lemma \ref{lem.weakvally} and the paragraphs after Lemma \ref{lem.weakvally}, the index $i$ is an ascent of a Richardson tableau $T$ if and only if $i$ is a weak valley of the corresponding Motzkin path $p$. Thus, the comajor statistic of $T$ is the same as the comajor statistic of its corresponding Motzkin path $p$. Thus,
    \begin{equation}\label{eq.Guoconjequiv}
        \sum_{T \in \mathcal{R}(n,k)} q^{\mathsf{comaj}(T)} = \sum_{p \in \mathcal{M}(n,k)} q^{\mathsf{comaj}(p)}.
    \end{equation}

    Now, we consider the following generating function of $\mathcal{G}^{+}(a,b,c)$ for $a \geq c$,
    \begin{align}
        \sum_{p \in \mathcal{G}^{+}(a,b,c)} q^{\mathsf{comaj}(p)} & = \sum_{p \in \mathcal{G}(a,b,c)} q^{\mathsf{comaj}(p)} - \sum_{p \in \mathcal{G}^{-}(a,b,c)} q^{\mathsf{comaj}(p)}  & \nonumber \\
        & = \sum_{p \in \mathcal{G}(a,b,c)} q^{\mathsf{comaj}(p)} - \sum_{p \in \mathcal{G}(a+1,b,c-1)} q^{\mathsf{comaj}(p)+1} & \text{(by Lemma \ref{lem.reflection})} \nonumber \\
        & = q^{\binom{b}{2}} \begin{bmatrix}
            a+b+c\\a,b,c
        \end{bmatrix}_q - 
        q^{\binom{b}{2}+1} \begin{bmatrix}
            a+b+c\\a+1,b,c-1
        \end{bmatrix}_q & \text{(by Lemma \ref{lem.gmgf})} \nonumber \\
        & = q^{\binom{b}{2}} \frac{[a+b+c]_q!}{[a]_q![b]_q![c-1]_q!} \left( \frac{1}{[c]_q} - \frac{q}{[a+1]_q} \right) \nonumber \\
        & = q^{\binom{b}{2}} \frac{[a+b+c]_q!}{[b]_q! [a+c]_q!} \cdot \frac{[a+c]_q!}{[a]_q! [c]_q!} \cdot [c]_q \left( \frac{1}{[c]_q} - \frac{q}{[a+1]_q} \right). \label{eq.pfconj}
    \end{align}
    It is clear that when taking $a=c=\frac{n-k}{2}$ and $b=k$, the set $\mathcal{G}^{+}(a,b,c)$ reduces to $\mathcal{M}(n,k)$, and the second and third terms in \eqref{eq.pfconj} give the $q$-Catalan number $C_{\frac{n-k}{2}}(q)$. We then obtain the desired result. 
\end{proof}

\subsection{The second proof}\label{sec.pfGuoinsertion}

A \textit{Dyck path} of length $2n$ is a special case of a Motzkin path of length $2n$ with no horizontal steps, that is, an element in the set $\mathcal{M}(2n,0)$. For convenience, we write $\mathcal{C}(n)$ for the set of Dyck paths of length $2n$. A valley (resp., peak) of a Dyck path $\tau$ is defined as an index $i \in [2n-1]$ such that the steps $i$ and $i+1$ of $\tau$ are of the form $DU$ (resp., $UD$). The major (resp., comajor) statistic of a Dyck path is given by the sum of all its peaks (resp., valleys). Similarly to \eqref{eq.localbijection}, we define the set of strictly increasing integer sequences
\begin{equation*}
    \mathcal{T}(n,m) := \{(a_1,a_2,\dots,a_m)\mid 0 \leq a_1<a_2< \cdots <a_m<n\}.
\end{equation*}
Note that if $m=0$, then $\mathcal{T}(n,m)$ contains only the empty sequence. 

The following theorem plays an important role in proving Theorem \ref{thm.Guoconj}.
\begin{theorem}\label{thm.tuple-Dyck-to-Motzkin}
    For $0\le k \le n$ with $k \equiv n \pmod 2$, let $n=k+2\ell$. There is a bijection 
    \begin{equation}
        \zeta:\mathcal{T}(n,k) \times \mathcal{C}(\ell) \rightarrow \mathcal{M}(n,k)
    \end{equation}
    such that the map $(\alpha,\tau) \mapsto \omega$ satisfies the identity $\mathsf{comaj}(\omega)=\mathsf{add}(\alpha)+ \mathsf{comaj}(\tau)$.
\end{theorem}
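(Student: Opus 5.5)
We build $\zeta$ by inserting the $k$ horizontal steps into the Dyck path $\tau$ one at a time, in the spirit of the local bijection $\psi^{(r)}$ of Section~\ref{sec.localbij} (case $r=1$), but without any marking. As a sanity check, note that $\sum_{\alpha\in\mathcal{T}(n,k)}q^{\mathsf{add}(\alpha)}=q^{\binom{k}{2}}\begin{bmatrix} n\\ k\end{bmatrix}_q$. Combined with $\sum_{\tau\in\mathcal{C}(\ell)}q^{\mathsf{comaj}(\tau)}=C_\ell(q)$ (the MacMahon/Fürlinger--Hofbauer result), such a $\zeta$ yields Theorem~\ref{thm.Guoconj} via \eqref{eq.Guoconjequiv}. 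So the identity $\mathsf{comaj}(\omega)=\mathsf{add}(\alpha)+\mathsf{comaj}(\tau)$ is exactly the right refinement to aim for.

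\emph{Step 1 (one insertion).} For a Motzkin path $w$ of length $L$, write $w^{[i]}$ for the path obtained by inserting one $H$ at the lattice point with horizontal coordinate $L-i$, so points are labelled $0,1,\dots,L$ from right to left. We will show the increment $\mathsf{comaj}(w^{[i]})-\mathsf{comaj}(w)$ is an affine function of $i$ that changes by exactly $1$ when $i$ increases by $1$. This is the same interchange argument used for $d(i)$ in the proof of Lemma~\ref{lem.statlocalbij}: moving the inserted $H$ one step to the left past a step $z$ means interchanging $z$ and $H$. For each local configuration ($z=U$ or $D$, and neighbours $U$, $D$, $H$), we check against the list of weak valleys $HH,DU,DH,HU$ that the comajor changes by exactly $\pm1$ with a constant sign. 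Inserting at the right end (after a final $D$) creates the valley $DH$ at $L$, so the increment is $L$ at $i=0$. Hence the increment is $L-i$, or more conveniently, after relabelling $b=L-i$ (left to right), it equals $b$. Collisions among equal steps (e.g.\ inserting $H$ on either side of an existing $H$ gives the same path) are exactly what the ordering in Step 2 will remove.

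\emph{Step 2 (the $k$ insertions and the strict condition).} Given $\alpha=(a_1<\dots<a_k)$, we insert the $j$th $H$ into the current path of length $2\ell+j-1$ at the point with left-to-right label $a_j-(j-1)$. Equivalently, the $H$'s of $\omega$ end up at prescribed positions determined by $\alpha$, and the conditions $a_j\ge a_{j-1}+1$ and $a_k<n$ guarantee that each new $H$ is placed weakly to the right of the previously inserted ones. This gives a canonical order among identical $H$ steps and so kills the overcounting. Summing the increments from Step 1 will give $\mathsf{add}(\alpha)$ up to a correction depending only on $k$. We must arrange the labelling so that this correction vanishes: the $\binom{k}{2}$ coming from the $HH$ valleys should be absorbed by the shift $a_j\mapsto a_j-(j-1)$, and this is where we fine-tune the convention.

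\emph{Step 3 (inverse).} From $\omega\in\mathcal{M}(n,k)$, delete the $H$ steps from right to left to recover $\tau$ and the insertion labels. Deleting all $H$'s from a Motzkin path always leaves a Dyck path, so $\tau\in\mathcal{C}(\ell)$ automatically; the labels are strictly increasing by construction, giving $\alpha\in\mathcal{T}(n,k)$.

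The main obstacle is Step 1 combined with the bookkeeping in Step 2. We need the single-$H$ comajor increment to be exactly linear in the insertion point across all local patterns, including when the neighbours are previously inserted $H$'s, and with the right normalization so that no residual $\binom{k}{2}$ or $k\ell$ term survives. We would first verify this on small cases such as $\ell=1$, $k=2$, and adjust the labelling direction if needed.
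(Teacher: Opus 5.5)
\textbf{There is a genuine gap: Step~1 is false, and the rest of the construction depends on it.}

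Insert an $H$ at the lattice point with $x$-coordinate $c$, between a step $s$ and a step $t$. This shifts every weak valley to the right of $c$ by one. It also replaces the pair $st$ at index $c$ by $sH$ at $c$ and $Ht$ at $c+1$. Here $sH$ is a weak valley iff $s\in\{D,H\}$, and $Ht$ is one iff $t\in\{U,H\}$.

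At a peak $st=UD$, neither new pair is a weak valley. So the increment there is only the number $b_c$ of valleys to the right of $c$. At every other slot of a Dyck path the increment is $c+b_c$ or $c+b_c+1$.

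Already for $\tau=UD$, the three insertions $HUD$, $UHD$, $UDH$ have comajor increments $1,0,2$. This is not monotone in either direction, so no choice of labelling direction or shift makes it affine. Moving the $H$ from $UDH$ to $UHD$ changes the comajor by $-2$, not $\pm1$.

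Consequently your rule fails in the smallest case $\ell=k=1$. Here $\alpha=(0)$ must go to $UHD$, the unique path in $\mathcal{M}(3,1)$ with comajor $0$, but your convention produces $HUD$. This is exactly why $\psi^{(r)}$ carries a marking rule: for $r=1$ the marked points are the peaks. Running the interchange argument of Lemma~\ref{lem.statlocalbij} ``without marking'' is precisely what breaks.

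What is true is narrower. On the non-peak slots of $\tau$ the increments run through $d,d+1,\dots,2\ell$ from left to right, where $d$ is the number of peaks. The peaks supply the missing values $0,\dots,d-1$, read from right to left.

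\textbf{The missing idea is the bookkeeping of Algorithm A.}

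The paper keeps a vector $F_i$ recording the current increment for each of the slots $\eta_0,\dots,\eta_{2\ell}$. It inserts the $i$th $H$ into the slot whose current value equals $a_i$, not at a position computed affinely from $a_i$. After each insertion $F$ is updated in two ways:
\begin{itemize}
\item values above a threshold shift up by one;
\item a peak slot that has just received an $H$ is reset to $c_t+1+b_{c_t}$, since a second $H$ there creates an $HH$ valley.
\end{itemize}

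Strict monotonicity of $\alpha$ is what makes this work, in two ways.
\begin{itemize}
\item Lemma~\ref{lem.insertion}(1): the value set of $F_{i-1}$ is exactly $\{0,\dots,2\ell-1+i\}\setminus\{a_1,\dots,a_{i-1}\}$, so $a_i$ is always available.
\item Lemma~\ref{lem.insertion}(2): the $i$th insertion raises the comajor by exactly $a_i$, so no $\binom{k}{2}$ correction needs to be tuned away.
\end{itemize}

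Insertions then happen in increasing order of value, not from left to right. In the paper's example, the first two $H$'s go into $\eta_5$ and then $\eta_2$. So your Step~2 ordering convention and your Step~3 inverse by right-to-left deletion do not describe the correct map.

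The inverse needs the explicit labels of Lemma~\ref{lem.insertion}(4), i.e.\ Algorithm B. These treat three kinds of section differently: wide plateaus, sections followed by $U$, and the remaining sections. As written, Steps~2 and~3 (``fine-tune the convention'', ``strictly increasing by construction'') are placeholders rather than arguments.
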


Given a Dyck path $\tau \in \mathcal{C}(n)$, we associate $\tau$ with a $(2n+1)$-tuple $(b_0, b_1, \dots, b_{2n})$ of integers defined by 
\begin{equation} \label{eq.vector}
b_i := | \{ j \mid j>i \text{ and $j$ is a valley} \} |,
\end{equation}
for $0\le i\le 2n$. For example, if $\tau \in \mathcal{C}(5)$ is the Dyck path shown in Figure \ref{fig.secondproofex}, then $(b_0, b_1, \dots, b_{10})=(2,2,2,1,1,1,1,1,0,0,0)$. 

Now, we describe the map $\zeta$ and its inverse as follows. Let $n=k+2\ell$. Given an ordered pair  $(\alpha,\tau) \in \mathcal{T}(n,k) \times \mathcal{C}(\ell)$, we will construct a Motzkin path $\omega \in \mathcal{M}(n,k)$ from $(\alpha,\tau)$. If $\alpha=(0,1,\dots, n-1)$ and $\tau$ is empty, we set $\omega$ to be the path consisting of $n$ horizontal steps. If $\alpha$ is empty, then $n=2\ell$ and we set $\omega=\tau$. For $1\leq k\leq n-1$,
let $\alpha=(a_1,a_2,\dots,a_k)$ and let $\tau_i$ be the $i$th step of $\tau$. We also let $(b_0,b_1,\dots,b_{2\ell})$ be the $(2\ell+1)$-tuple associated to the Dyck path $\tau$. The Motzkin path $\omega$ is expressed as
\begin{equation} \label{eqn:expression}
\omega = \eta_0\oplus \tau_1 \oplus \eta_1 \oplus \tau_2 \oplus \eta_2 \oplus \cdots \oplus \tau_{2\ell}\oplus\eta_{2\ell},
\end{equation}
where each $\eta_j$ is a sequence of consecutive horizontal steps (possibly empty) for $0\leq j\leq 2\ell$. To obtain \eqref{eqn:expression}, we construct a sequence $\omega_0=\tau, \omega_1,\dots,\omega_k=\omega$ of Motzkin paths, where $\omega_i$ is obtained from $\omega_{i-1}$ by adding a horizontal step, $1\leq i\leq k$. This process is presented in the following algorithm.

\medskip
\noindent{\bf Algorithm A.}

We associate the sections $\eta_0,\eta_1,\dots,\eta_{2\ell}$ of each $\omega_i$ with a vector $F_i=(f_i(0), f_i(1), \dots, f_i(2\ell))$ of integers, where $f_i(j)$ keeps track of the increments to the comajor statistic when a horizontal step is inserted in the section $\eta_j$ of $\omega_i$ for $0\le j\le 2\ell$.

\begin{enumerate}
\item[(1)] The initial case is $\omega_0=\tau$, where the sections $\eta_0,\eta_1,\dots,\eta_{2\ell}$ are all empty. 
Suppose $\tau$ contains $d$ occurrences of the peak $UD$, for some $d \geq 1$. Let $\{c_1, c_2,\dots, c_d\}$ be the set of peaks of $\tau$, where $0<c_1<c_2<\cdots<c_d<2\ell$. The vector $F_0$ is given as follows. The $d$ peaks are assigned $0,1,\dots,d-1$ from right to left, i.e., $f_0(c_t)=d-t$ for $1\le t\le d$. The rest of the entries of $F_0$ are assigned $d, d+1, \dots, 2\ell$ from left to right. Notice that in terms of the tuple $(b_0, b_1, \dots, b_{2\ell})$, for $0\leq j\leq 2\ell$, we have
\begin{equation} \label{eqn:each_entry}
    f_0(j) = 
    \begin{cases}
    b_j, & \mbox{if $j\in\{c_1,c_2,\dots,c_d\}$,} \\
    j+b_j+1, &\mbox{if  $\eta_j$ is followed by an up step,} \\
    j+b_j, &\mbox{if $j\not\in\{c_1,c_2,\dots,c_d\}$ and $\eta_j$ is not followed by an up step.}
    \end{cases}
\end{equation}

\item[(2)] For $i\ge 1$, suppose the Motzkin path $\omega_{i-1}$ with the vector $F_{i-1}$ has been constructed. To construct $\omega_i$, we insert a horizontal step in one of the sections $\eta_0,\eta_1,\dots,\eta_{2\ell}$ of $\omega_{i-1}$ and obtain the vector $F_i$ from $F_{i-1}$ according to the following two cases of $a_i$ (we shall prove in Lemma \ref{lem.insertion} that $a_i$ is an entry in $F_{i-1}$).

Case 1. $a_i<d$. Then the integer $a_i$ is assigned to a peak, say $c_t$ for some $t$ (i.e., $f_{i-1}(c_t)=a_i$). We add a horizontal step in the section $\eta_{c_t}$ and obtain the vector $F_i$ by setting 
\begin{equation} \label{eqn:case-1}
    f_i(h) = \begin{cases}
    c_t+1+b_{c_t}, & \mbox{if $h=c_t$,} \\
    f_{i-1}(h)+1,  & \mbox{if $f_{i-1}(h)\geq c_t+1+b_{c_t}$,} \\
    f_{i-1}(h),     & \mbox{otherwise,}
    \end{cases}
\end{equation}
for $0\le h\le 2\ell$.

Case 2. $a_i\ge d$. Let $f_{i-1}(j)=a_i$ for some $j$. We add a horizontal step in the section $\eta_{j}$ and obtain the vector $F_i$ by setting 
\begin{equation} \label{eqn:case-2}
    f_i(h) = \begin{cases}
    f_{i-1}(h)+1,  & \mbox{if $f_{i-1}(h)\geq a_i$,} \\
    f_{i-1}(h),     & \mbox{otherwise,}
    \end{cases}
\end{equation}
for $0\le h\le 2\ell$.
\end{enumerate}

\begin{example}\label{ex.construction-Motzkin-path} 
    Let $(\alpha,\tau) \in \mathcal{T}(15,5) \times \mathcal{C}(5)$, where $\alpha=(1,2,5,12,14)$ and $\tau=UUDUUDDDUD$; see Figure \ref{fig.secondproofex}. Notice that $\mathsf{add}(\alpha)=34$ and $\mathsf{comaj}(\tau)=11$. Using Algorithm A, each stage of the Motzkin paths $\omega_0,\omega_1,\dots,\omega_5$ is shown in Figure \ref{fig.Motzkin-path-construction}. The vectors $F_0,F_1,\dots,F_5$ are listed in Table \ref{tab.step-by-step}. The resulting Motzkin path $\omega$ is given in Figure \ref{fig.secondproofex5} with $\mathsf{comaj}(\omega)=45$.
\end{example}
\begin{figure}[hbt!]
        \centering
        \subfigure[]{\label{fig.secondproofex}
            \begin{tikzpicture}[scale=0.6]
                \draw[thick, black]
                    (0,0) -- (1,1)
                          -- (2,2) 
                          -- (3,1)
                          -- (4,2)
                          -- (5,3)
                          -- (6,2)
                          -- (7,1)
                          -- (8,0)
                          -- (9,1)
                          -- (10,0);
    
                \foreach \x/\y in {0/0, 1/1, 2/2, 3/1, 4/2, 5/3, 6/2, 7/1, 8/0, 9/1, 10/0}{
                    \filldraw[black] (\x,\y) circle (2pt);
                }    
            \end{tikzpicture}
        }
        
        \vspace{1em}
        \subfigure[]{\label{fig.secondproofex1}
            \begin{tikzpicture}[scale=0.6]
                \draw[thick, black]
                    (0,0) -- (1,1)
                          -- (2,2) 
                          -- (3,1)
                          -- (4,2)
                          -- (5,3);
                \draw[thick, red]
                    (5,3) -- (6,3);
                \draw[thick, black]           
                    (6,3) -- (7,2)
                          -- (8,1)
                          -- (9,0)
                          -- (10,1)
                          -- (11,0);
    
                \foreach \x/\y in {0/0, 1/1, 2/2, 3/1, 4/2, 5/3, 6/3, 7/2, 8/1, 9/0, 10/1, 11/0}{
                    \filldraw[black] (\x,\y) circle (2pt);
                }    
            \end{tikzpicture}
        }
        
        \vspace{1em}
        \subfigure[]{\label{fig.secondproofex2}
            \begin{tikzpicture}[scale=0.6]
                \draw[thick, black]
                    (0,0) -- (1,1)
                          -- (2,2);
                \draw[thick, red]
                    (2,2) -- (3,2);          
                \draw[thick, black]
                    (3,2) -- (4,1)
                          -- (5,2)
                          -- (6,3)
                          -- (7,3)
                          -- (8,2)
                          -- (9,1)
                          -- (10,0)
                          -- (11,1)
                          -- (12,0);
    
                \foreach \x/\y in {0/0, 1/1, 2/2, 3/2, 4/1, 5/2, 6/3, 7/3, 8/2, 9/1, 10/0, 11/1, 12/0}{
                    \filldraw[black] (\x,\y) circle (2pt);
                }    
            \end{tikzpicture}
        }
        
        \vspace{1em}
        \subfigure[]{\label{fig.secondproofex3}
            \begin{tikzpicture}[scale=0.6]
                \draw[thick, black]
                    (0,0) -- (1,1)
                          -- (2,2)
                          -- (3,2);
                \draw[thick, red]
                    (3,2) -- (4,2);
                \draw[thick, black]    
                    (4,2) -- (5,1)
                          -- (6,2)
                          -- (7,3)
                          -- (8,3)
                          -- (9,2)
                          -- (10,1)
                          -- (11,0)
                          -- (12,1)
                          -- (13,0);
    
                \foreach \x/\y in {0/0, 1/1, 2/2, 3/2, 4/2, 5/1, 6/2, 7/3, 8/3, 9/2, 10/1, 11/0, 12/1, 13/0}{
                    \filldraw[black] (\x,\y) circle (2pt);
                }    
            \end{tikzpicture}
        }
        
        \vspace{1em}
        \subfigure[]{\label{fig.secondproofex4}
            \begin{tikzpicture}[scale=0.6]
                \draw[thick, black]
                    (0,0) -- (1,1)
                          -- (2,2)
                          -- (3,2)
                          -- (4,2)
                          -- (5,1)
                          -- (6,2)
                          -- (7,3)
                          -- (8,3)
                          -- (9,2)
                          -- (10,1)
                          -- (11,0);
                \draw[thick, red]          
                    (11,0) -- (12,0);
                \draw[thick, black]    
                    (12,0) -- (13,1)
                          -- (14,0);
    
                \foreach \x/\y in {0/0, 1/1, 2/2, 3/2, 4/2, 5/1, 6/2, 7/3, 8/3, 9/2, 10/1, 11/0, 12/0, 13/1, 14/0}{
                    \filldraw[black] (\x,\y) circle (2pt);
                }    
            \end{tikzpicture}
        }
        
        \vspace{1em}
        \subfigure[]{\label{fig.secondproofex5}
            \begin{tikzpicture}[scale=0.6]
                \draw[thick, black]
                    (0,0) -- (1,1)
                          -- (2,2)
                          -- (3,2)
                          -- (4,2)
                          -- (5,1)
                          -- (6,2)
                          -- (7,3)
                          -- (8,3)
                          -- (9,2)
                          -- (10,1)
                          -- (11,0)
                          -- (12,0)
                          -- (13,1)
                          -- (14,0);
                \draw[thick, red]
                    (14,0) -- (15,0);
    
                \foreach \x/\y in {0/0, 1/1, 2/2, 3/2, 4/2, 5/1, 6/2, 7/3, 8/3, 9/2, 10/1, 11/0, 12/0, 13/1, 14/0, 15/0}{
                    \filldraw[black] (\x,\y) circle (2pt);
                }    
            \end{tikzpicture}
        }
        \caption{The construction of a Motzkin path using Algorithm A in Example \ref{ex.construction-Motzkin-path}. Each newly added horizontal step is drawn in red.}
        \label{fig.Motzkin-path-construction}
\end{figure}

\begin{table}[hbt!]
    \centering
    {\footnotesize
    \begin{tabular}{c|c|ccccccccccc|c}
    $i$ & $a_i$     & $f_i(0)$  &  $f_i(1)$   &  $f_i(2)$ & $f_i(3)$ & $f_i(4)$ & $f_i(5)$ & $f_i(6)$ & $f_i(7)$  & $f_i(8)$ & $f_i(9)$ & $f_i(10)$  &  Figure \ref{fig.Motzkin-path-construction}\\
    \hline
    0   &   &  3  &  4   &  2  &  5  &  6  &  1  &  7  &   8  &  9  & 0  & 10  &  (a) \\
    1  &  1    &  3  &  4   &  2  &  5  &  6  &  7  &  8  &   9  &  10  & 0  & 11  &  (b) \\
    2  &  2    &  3  &  4   &  5  &  6  &  7  &  8  &  9  &   10  &  11  & 0  & 12  &  (c) \\
    3  &  5    &  3  &  4   &  6  &  7  &  8  &  9  &  10  &   11  &  12  & 0  & 13  &  (d) \\
    4  &  12   &  3  &  4   &  6  &  7  &  8  &  9  &  10  &   11  &  13  & 0  & 14  &  (e) \\
    5  &  14   &  3  &  4   &  6  &  7  &  8  &  9  &  10  &   11  &  13  & 0  & 15  &  (f)
    \end{tabular}
    }
    \vspace{2mm}
    \caption{The vectors $F_i$ in Example \ref{ex.construction-Motzkin-path}.}
    \label{tab.step-by-step}
\end{table}

\begin{remark}\label{rmk.difference}
    We would like to point out that the map $\zeta$ is not a special case of the local bijection $\psi$ presented in Section \ref{sec.localbij}. For the map $\zeta$, we are allowed to insert horizontal steps into a Dyck path in any position, but for the map $\psi$, the inserted positions depend on the marking rule.  
\end{remark}

For a Motzkin path $\omega$, a non-empty sequence of consecutive horizontal steps is called a \emph{wide plateau} of $\omega$ if it is preceded by an up step and followed by a down step in $\omega$. 

\begin{lemma} \label{lem.insertion}
Let $n=k+2\ell$. Given an ordered pair $(\alpha,\tau) \in \mathcal{T}(n,k) \times \mathcal{C}_{\ell}$, let $\alpha=(a_1,a_2,\dots,a_k)$ and $\tau_i$ be the $i$th step of $\tau$. Suppose $\omega_0= \tau, \omega_1, \dots,\omega_k=\omega$ is the sequence of Motzkin paths constructed from $(\alpha,\tau)$, and  $F_i=(f_i(0),f_i(1),\dots,f_i(2\ell))$ is the vector associated to $\omega_i$ under Algorithm A. We assume the image $\omega$ of $(\alpha,\tau)$ under the map $\zeta$ is factored as in (\ref{eqn:expression}), where
\begin{equation*} 
    \omega = \eta_0 \oplus \tau_1\oplus \eta_1 \oplus \tau_2 \oplus \eta_2 \oplus \cdots \oplus \tau_{2\ell} \oplus \eta_{2\ell}.
\end{equation*}
Then the following properties hold.
\begin{enumerate}
    \item[(1)] For $1\le i\le k$, we have
    \begin{equation*}
        a_i\in\{f_{i-1}(0),f_{i-1}(1),\dots,f_{i-1}(2\ell)\}=\{0,1,\dots,2\ell-1+i\}\setminus\{a_1,\dots,a_{i-1}\}.
    \end{equation*}
    \item[(2)] For $0\le i\le k-1$, if $\omega_{i+1}$ is obtained from $\omega_i$ by inserting a horizontal step in the section of horizontal steps $\eta_j$ of $\omega_i$, then $\mathsf{comaj}(\omega_{i+1}) = \mathsf{comaj}(\omega_i)+f_i(j)$.
    \item[(3)] We have $\mathsf{comaj}(\omega)=\mathsf{comaj}(\tau)+\mathsf{add}(\alpha)$.
    \item[(4)] For the Motzkin path $\omega$, suppose $\eta_j$ is non-empty for some $j\in\{0,1,\dots,2\ell\}$. Assume that $\eta_j$ goes from the $x$-coordinate $h$ to the $x$-coordinate $g$ for some $g>h$. Then the set $X(\eta_j)$ of integers associated to the steps in $\eta_j$ is
    \begin{equation} \label{eqn:non-empty-section}
        X(\eta_j)=
        \begin{cases}
        \{b_j\}\cup\{h+1+b_j, \dots, g-1+b_j\}, &\mbox{if $\eta_j$ is a wide plateau,}  \\
        \{h+1+b_j, \dots, g+b_j\}, &\mbox{if $\eta_j$ is followed by an up step,} \\
        \{h+b_j, \dots, g-1+b_j\}, &\mbox{otherwise.}
        \end{cases}
    \end{equation}
\end{enumerate}
\end{lemma}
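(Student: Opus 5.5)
Throughout, I would do a simultaneous induction on $i$ and keep a single invariant: for every $i$, the vector $F_i$ records, section by section, the comajor increment obtained by inserting one more horizontal step into that section of $\omega_i$. Within a nonempty section, each inserted step should carry its own label, and these labels should be exactly the set $X(\eta_j)$ of part (4).

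The first task is a local computation of how inserting an $H$ step at a given point changes the set of weak valleys. Put a new $H$ at position $x$, i.e.\ immediately after the prefix of length $x$. Every weak valley to the right of the insertion point shifts by $1$. Whether the junctions at the insertion point become weak valleys depends only on the neighbouring steps: an $H$ is a weak valley with its left neighbour when that neighbour is $D$ or $H$, and with its right neighbour when that neighbour is $U$ or $H$. From this, the increment is $x$ plus the number of valleys to the right, plus $1$ if the $H$ is followed by $U$, and with the appropriate correction when the section sits at a peak. At an empty peak, $UD$ becomes $UHD$: the peak disappears, no valley is created, and the increment equals the number of valleys to the right, namely $b_{c_t}$. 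At a wide plateau or a general section, the insertion point may be chosen at either end of the existing run of $H$'s; the resulting path is the same, so I am free to pick the convenient end.

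Since the steps of $\tau$ are never moved, the number of valleys of $\omega_i$ lying to the right of section $\eta_j$ equals $b_j$ plus the number of $HH$-type weak valleys further right. Checking this against \eqref{eqn:each_entry} gives the base case $i=0$ of (2) and (4).

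For the inductive step I would verify that the update rules \eqref{eqn:case-1} and \eqref{eqn:case-2} exactly reproduce the new increments. The key point is that inserting a step whose increment is $v$ raises by $1$ every increment that is $\ge v$, and leaves every increment $<v$ unchanged. This holds because positions to the right shift and gain a new valley count exactly when their increment value exceeds $v$, since increments are monotone in position apart from the peak entries. Peak entries are small ($<d$) and change only when a larger insertion happens to their right. The peak case is special: once a peak is filled it becomes a wide plateau, and its increment jumps to $c_t+1+b_{c_t}$, as in the first line of \eqref{eqn:case-1}.

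Part (1) then follows as a byproduct. The values of $F_{i-1}$ together with the values already used are exactly $\{0,\dots,2\ell-1+i\}$, because each insertion uses one value and shifts all larger values up by one. Since $\alpha$ is strictly increasing, $a_i$ is unused, and so it appears among the entries of $F_{i-1}$.

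Part (3) is immediate from (2) by summing: $\mathsf{comaj}(\omega)=\mathsf{comaj}(\tau)+\sum_i f_{i-1}(j_i)=\mathsf{comaj}(\tau)+\mathsf{add}(\alpha)$.

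For part (4) I would describe directly the labels used within a section, taken in position order, which gives \eqref{eqn:non-empty-section}. The three cases match the three boundary behaviours identified in the local computation: the wide plateau contributes the extra label $b_j$, a section followed by $U$ gives the shift $+1$, and every other section gives the plain case.

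I expect the main obstacle to be the monotonicity claim: that an insertion with increment $v$ changes exactly the entries $\ge v$, and in particular that this holds across the interaction between the small peak entries and the position-based entries. I would first try to prove it by writing every increment explicitly as $x+(\text{number of weak valleys right of } x)+\epsilon$, with $\epsilon\in\{-1,0,1\}$ determined by the boundary type of the section. With this formula, an order comparison between two sections reduces to comparing their positions.
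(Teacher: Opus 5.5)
Your outline is the paper's: induct on $i$, read $F_i$ as a table of comajor increments, check \eqref{eqn:case-1} and \eqref{eqn:case-2}, get (1) from the value sets and (3) by summing. But the invariant you want to carry, that $f_i(h)$ is the increment at \emph{every} section, is false, and so is your ``key point''.

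Inserting an $H$ into a section other than an unfilled peak creates a new weak valley. This raises the increment of every section to its \emph{left} as well, whereas \eqref{eqn:case-2} leaves those entries alone. For example, take $\tau=UUDD$, so $d=1$, all $b_j=0$ and $F_0=(1,2,0,3,4)$. With $a_1=2$ you get $\omega_1=UHUDD$ (comajor $2$) and $F_1=(1,3,0,4,5)$. But $HUHUDD$ has comajor $4$, so the true increment at $\eta_0$ is $2\neq f_1(0)$. The same example refutes your count ``$b_j$ plus the $HH$-valleys further right'': the $HU$ at $2$ lies right of $\eta_0$ and is neither.

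In the peak case the key point fails the other way. For $\tau=UDUD$, $F_0=(2,1,3,0,4)$ and $a_1=0$, the entries $2,1,3\ge v$ are correctly left unchanged by \eqref{eqn:case-1}. Filling a peak creates no valley, so nothing to its left moves, and the threshold is $c_t+1+b_{c_t}$, not $v$.

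The missing idea is that only entries exceeding $a_i$ matter. Since $\alpha$ is strictly increasing and no later update touches an entry below the current $a_i$, such entries are never selected again. The right invariant is: $f_i(h)$ is the true increment whenever $f_i(h)>a_i$, and the non-peak and filled sections carry increasing values from left to right. Proving it needs the order in which sections are filled, which you never use.
\begin{itemize}
\item Case~1 insertions come first and fill peaks from right to left, because the peaks carry $0,\dots,d-1$ from right to left. They create no valley. So, apart from $\eta_{c_t}$ itself, only the non-peak and filled sections to the right of $c_t$ shift, and these carry exactly the entries $\ge c_t+1+b_{c_t}=f_0(c_t+1)$.
\item Case~2 insertions then go weakly from left to right, so no earlier insertion has created a valley to the right of the current section. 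The non-peak and filled sections at or to its right shift by one, and these carry exactly the entries $\ge a_i$. Those to its left genuinely gain $+1$, but their entries are $<a_i$ and are never used again.
\end{itemize}
This is what the paper's remarks ``so far there is no weak valley on the right of $\eta_{c_t}$'' and ``the section with $f_{i-1}(g)=a_s$ is on the left of $\eta_j$'' encode. The same ordering is what makes the labels within a section consecutive in (4).

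In (1), ``shifts all larger values up by one'' misdescribes \eqref{eqn:case-1}. The set identity still holds, but only because every used value is $<d\le c_t+1+b_{c_t}$.
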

\begin{proof} 
(1) Since $0\le a_1<a_2<\cdots a_k<n$, we have 
\begin{equation} \label{eqn:upper_bound}
    a_i\le n-1-k+i=2\ell-1+i,
\end{equation} 
for $1\le i\le k$. 
We shall prove the assertion (1) by induction. For the initial case $i=1$, by Algorithm A (1), we have $\{f_0(0), f_0(1), \dots, f_0(2\ell)\} = \{0,1,\dots,2\ell\}$. By (\ref{eqn:upper_bound}), $a_1\leq 2\ell$. Thus, $a_1\in\{f_0(0),f_0(1),\dots,f_0(2\ell)\}$. For $i\geq 1$, suppose the assertion (1) holds for the case $i$. By induction hypothesis, $a_i \in \{f_{i-1}(0), f_{i-1}(1), \dots, f_{i-1}(2\ell)\} = \{0,1,\dots,2\ell-1+i\} \setminus \{a_1,\dots,a_{i-1}\}$. By Algorithm A (2), it follows from (\ref{eqn:case-1}) and (\ref{eqn:case-2}) that
\begin{equation*}
    \{f_{i}(0),f_{i}(1),\dots,f_{i}(2\ell)\}=\big(\{f_{i-1}(0),f_{i-1}(1),\dots,f_{i-1}(2\ell)\}\setminus \{a_i\}\big)\cup\{2\ell+i\}.
\end{equation*}
By \eqref{eqn:upper_bound}, $a_{i+1}\leq 2\ell +i$, and hence $a_{i+1}\in\{f_{i}(0),f_{i}(1),\dots,f_{i}(2\ell)\}$. Thus, the assertion (1) follows.

(2) We shall prove the assertion (2) by induction. Consider the initial case $\omega_0=\tau$. For the peaks  $0<c_1<c_2< \cdots <c_d< 2\ell$ of $\tau$, if a horizontal step is inserted in the section $\eta_{c_t}$ for some $t$, then no weak valley is created, but the valleys on the right of $\eta_{c_t}$ are shifted to the right by one step. This operation results in an increment of $f_0(c_t)=b_{c_t}$ on $\mathsf{comaj}(\omega_0)$.
Suppose a horizontal step is inserted in the section $\eta_j$ for some $j\in\{0,1,\dots,2\ell\} \setminus \{c_1,c_2,\dots,c_d\}$. If $\eta_j$ is followed by an up step, then a weak valley $HU$ at $j+1$ is created; otherwise, $\eta_t$ is followed by a down step and a weak valley $DH$ at $t$ is created. In either case, by \eqref{eqn:each_entry}, this operation results in an increment of $f_0(j)$ on $\mathsf{comaj}(\omega_0)$.
Thus, the assertion (2) holds for the initial case $\omega_0$.

For $i\geq 1$, suppose the assertion (2) holds for $\omega_{i-1}$. By (1), $a_i \in \{f_{i-1}(0),f_{i-1}(1), \dots,f_{i-1}(2\ell)\}$. By Algorithm A (2), the path $\omega_i$ is constructed according to the following two cases of $a_i$.

Case 1. $a_i<d$. Then the integer $a_i$ is assigned to a peak, say $c_t$ for some $t$ (i.e., $f_{i-1}(c_t)=a_i$), and $\omega_i$ is obtained from $\omega_{i-1}$ by inserting a horizontal step in the section $\eta_{c_t}$. By induction hypothesis, $\mathsf{comaj}(\omega_i)=\mathsf{comaj}(\omega_{i-1})+f_{i-1}(c_t)=\mathsf{comaj}(\omega_{i-1})+a_i$. By \eqref{eqn:case-1}, the vector $F_i$ is obtained by setting
\begin{equation} \label{eqn:f0-1}
    f_i(h) = \begin{cases}
    c_t+1+b_{c_t}, & \mbox{if $h=c_t$,} \\
    f_{i-1}(h)+1,  & \mbox{if $f_{i-1}(h)\geq c_t+1+b_{c_t}$,} \\
    f_{i-1}(h),     & \mbox{otherwise,}
    \end{cases}
\end{equation}
for $0\le h\le 2\ell$.

We observe that if another horizontal step is inserted in $\eta_{c_t}$, then a weak valley $HH$ at $c_t+1$ will be created. Since $a_1<\cdots<a_i<d$, so far there is no weak valley on the right of $\eta_{c_t}$. This operation will result in an increment of $c_t+1+b_{c_t}=f_i(c_t)$ on $\mathsf{comaj}(\omega_i)$. For each section $\eta_h$ with $f_{i-1}(h)\geq c_t+1+b_{c_t}$, since $\eta_h$ is shifted to the right by one step, if a horizontal step is inserted in $\eta_h$, then it results in an increment of $f_{i-1}(h)+1=f_i(h)$ on $\mathsf{comaj}(\omega_i)$.

Case 2. $a_i\geq d$. Let $f_{i-1}(j)=a_i$ for some $j$.
Then $\omega_i$ is obtained from $\omega_{i-1}$ by inserting a horizontal step in the section $\eta_{j}$. By induction hypothesis, $\mathsf{comaj}(\omega_i)=\mathsf{comaj}(\omega_{i-1})+f_{i-1}(j)=\mathsf{comaj}(\omega_{i-1})+a_i$. By \eqref{eqn:case-2}, the vector $F_i$ is obtained by setting
\begin{equation} \label{eqn:f0-2}
    f_i(h) = \begin{cases}
    f_{i-1}(h)+1,  & \mbox{if $f_{i-1}(h)\geq a_i$,} \\
    f_{i-1}(h),     & \mbox{otherwise,}
    \end{cases}
\end{equation}
for $0\le h\le 2\ell$. 

Note that if there is an index  $s\in\{1,\dots,i-1\}$ such that $d\leq a_s<a_i$, then the section, say $\eta_g$, with $f_{i-1}(g)=a_s$ is on the left of $\eta_j$. Hence there is no weak valley on the right of $\eta_j$. Thus, for each section $\eta_h$ with $f_{i-1}(h)>a_i$, since $\eta_h$ is shifted to the right by one step, if a horizontal step is inserted in $\eta_h$, then it results in an increment of $f_{i-1}(h)+1=f_i(h)$ on $\mathsf{comaj}(\omega_i)$.

Thus, the assertion (2) follows by induction.

(3) By assertions (1) and (2), we have $\mathsf{comaj}(\omega_0)=\mathsf{comaj}(\tau)$ and $\mathsf{comaj}(\omega_i)=\mathsf{comaj}(\omega_{i-1})+a_i$ for $1\leq i\leq k$. Therefore, the result follows.

(4) Note that if the section $\eta_j$ is a wide plateau of $\omega$, then $j$ is a peak of the Dyck path $\tau$. By \eqref{eqn:expression}, $b_j\in X(\eta_j)$. Moreover, $\eta_j$ contains weak valleys $HH$ at $h+1, h+2, \dots, g-1$. Hence $X(\eta_j)=\{b_j\}\cup\{h+1+b_j,\dots,g-1+b_j\}$. 
If the section $\eta_j$ is followed by an up step, then $\eta_j$ contains a weak valley $HU$ at $g$ and weak valleys $HH$ at $h+1, h+2,\dots, g-1$. Hence $X(\eta_j)=\{h+1+b_j,\dots, g+b_j\}$. (In the case that $\eta_j$ is preceded by a down step and followed by an up step, the weak valley $DH$ at $h$ has been counted as a valley of the Dyck path $\tau$.) If $\eta_j$ is not a plateau or followed by an up step, then it must be preceded by a down step. It follows that $\eta_j$ contains a weak valley $DH$ at $h$ and weak valleys $HH$ at $h+1,\dots, g-1$. Hence $X(\eta_j)=\{h+b_j,\dots,g-1+b_j\}$. The result follows.
\end{proof}

Let $n=k+2\ell$. Now, we describe the map $\zeta^{\prime}: \mathcal{M}(n,k) \rightarrow \mathcal{T}(n,k) \times \mathcal{C}_{\ell}$. Given a Motzkin path $\omega\in\mathcal{M}(n,k)$, we will construct an ordered pair $(\alpha,\tau) \in \mathcal{T}(n,k) \times \mathcal{C}_{\ell}$ from $\omega$. If $k=n$, then set $\alpha=(0,1,\dots,n-1)$ and $\tau$ is empty. If $k=0$, then $\alpha$ is empty and set $\tau=\omega$. For $1\le k\le n-1$, the ordered pair $(\alpha,\tau)$ is constructed by the following algorithm.

\medskip
\noindent {\bf Algorithm B.}

We decompose $\omega$ as
    \begin{equation} \label{eqn:factorization}
        \omega = \eta_0 \oplus x_1 \oplus \eta_1 \oplus x_2 \oplus \eta_2 \oplus \cdots \oplus x_{2\ell} \oplus \eta_{2\ell},
    \end{equation}
    where each $\eta_i$ is a sequence of consecutive horizontal steps (possibly empty), and each $x_i$ is either an up step or a down step. The Dyck path $\tau$ is obtained from $\omega$ by directly removing the $k$ horizontal steps. We may write 
    \begin{equation} \label{eqn:hidden-Dyck-path}
        \tau= x_{1} \oplus x_{2} \oplus \cdots \oplus x_{2\ell}.
    \end{equation} 
Let $(b_0,b_1,\dots, b_{2\ell})$ be the tuple of integers associated with $\tau$ defined in \eqref{eq.vector}. It remains to determine the sequence $\alpha$, which is obtained from an integer-labeling of the $k$ horizontal steps of $\omega$ as follows. 

\begin{enumerate}
\item[(1)] For each wide plateau of $\omega$, say $\eta_h$ for some $h\in\{1,\dots,2\ell-1\}$, we mark the last step of $\eta_h$ and label this step with the integer $b_h$. 
\item[(2)] For each unmarked horizontal step $z$ of $\omega$, let $z$ be the $i$th step of $\omega$ for some $i$, and let $\eta_j$ be the section of horizontal steps containing $z$ for some $j$.  If $\eta_j$ is a wide plateau or is followed by an up step, then we label $z$ with $i+b_j$; otherwise, we label $z$ with $i-1+b_j$. 
\end{enumerate}
The announced sequence $\alpha$ is the increasing arrangement of the labels on these horizontal steps.

\begin{example} 
    We consider the Motzkin path $\omega \in \mathcal{M}(15,5)$ shown in Figure \ref{fig.exalgoB1}. Figure \ref{fig.exalgoB2} depicts the resulting Dyck path $\tau \in \mathcal{C}(5)$ obtained from $\omega$ by removing all its horizontal steps. The vector that $\tau$ is associated with is given by $(b_0,b_1,\dots,b_{10})=(2,2,2,1,1,1,1,1,0,0,0)$. As shown in \eqref{eqn:factorization}, we have $w= \eta_0 \oplus x_1 \oplus \eta_1  \oplus \cdots \oplus x_{10} \oplus \eta_{10}$, where $\eta_2, \eta_5, \eta_8$, and $\eta_{10}$ are non-empty. To determine the sequence $\alpha$, we observe that $\eta_2$ and $\eta_5$ belong to Algorithm B $(1)$; their last horizontal steps are labeled (in red in Figure \ref{fig.exalgoB1}) as $b_2$ and $b_5$, respectively. Next, the horizontal steps located at steps $3$ ($i=3$ and $j=2$) and $12$ ($i=12$ and $j=8$) of $w$ have the form mentioned in Algorithm B $(2)$, they are labeled by $3+b_2 = 5$ and $12+b_8=12$, respectively. Finally, we label the last horizontal step by $15-1+b_{10} = 14$. We then obtain the sequence $\alpha=(1,2,5,12,14)$.
\end{example}
\begin{figure}[hbt!]
        \centering
        \subfigure[]{\label{fig.exalgoB1}
            \begin{tikzpicture}[scale=0.6]
                \draw[thick, black]
                    (0,0) -- (1,1)
                          -- (2,2)
                          -- (3,2)
                          -- (4,2)
                          -- (5,1)
                          -- (6,2)
                          -- (7,3)
                          -- (8,3)
                          -- (9,2)
                          -- (10,1)
                          -- (11,0)
                          -- (12,0)
                          -- (13,1)
                          -- (14,0)
                          -- (15,0);

                \node[above] at (2.5, 2) {$5$};
                \node[above] at (3.5, 2) {\textcolor{red}{$2$}};
                \node[above] at (7.5, 3) {\textcolor{red}{$1$}};
                \node[above] at (11.5, 0) {$12$};
                \node[above] at (14.5, 0) {$14$};

                \node[below] at (3, 2) {$\eta_2$};
                \node[below] at (7.5, 3) {$\eta_5$};
                \node[below] at (11.5, 0) {$\eta_8$};
                \node[below] at (14.5, 0) {$\eta_{10}$};

                \foreach \x/\y in {0/0, 1/1, 2/2, 3/2, 4/2, 5/1, 6/2, 7/3, 8/3, 9/2, 10/1, 11/0, 12/0, 13/1, 14/0, 15/0}{
                    \filldraw[black] (\x,\y) circle (2pt);
                }    
            \end{tikzpicture}
        }
        
        \vspace{1em}
        \subfigure[]{\label{fig.exalgoB2}
            \begin{tikzpicture}[scale=0.6]
                \draw[thick, black]
                    (0,0) -- (1,1)
                          -- (2,2) 
                          -- (3,1)
                          -- (4,2)
                          -- (5,3)
                          -- (6,2)
                          -- (7,1)
                          -- (8,0)
                          -- (9,1)
                          -- (10,0);
    
                \foreach \x/\y in {0/0, 1/1, 2/2, 3/1, 4/2, 5/3, 6/2, 7/1, 8/0, 9/1, 10/0}{
                    \filldraw[black] (\x,\y) circle (2pt);
                }    
            \end{tikzpicture}
        }
        \caption{An illustration of the map $\zeta^{\prime}$ via Algorithm B.}
        \label{fig.exalgoB}
    \end{figure}

\begin{lemma} \label{lem.inversezeta}
Let $n=k+2\ell$. For any Motzkin path $\omega \in \mathcal{M}(n,k)$, let $(\alpha,\tau)$ be the ordered pair obtained from $\omega$ under $\zeta^{\prime}$ using Algorithm B. Then $(\alpha,\tau) \in \mathcal{T}(n,k) \times \mathcal{C}(\ell)$.
\end{lemma}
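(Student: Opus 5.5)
The plan is to check the two components of $\zeta'(\omega)$ separately. The Dyck part is immediate, so most of the work goes into showing that the labels produced by Algorithm B are $k$ distinct integers in $\{0,1,\dots,n-1\}$.

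\textbf{The path $\tau$.} Deleting the $k$ horizontal steps of $\omega$ does not change the height reached after each remaining $U$ or $D$ step. Hence $\tau=x_1\oplus\cdots\oplus x_{2\ell}$ stays weakly above the $x$-axis and returns to it. It has $2\ell=n-k$ steps and no $H$ steps, so $\tau\in\mathcal{C}(\ell)$.

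\textbf{Labels section by section.} Fix a nonempty section $\eta_j$ occupying the steps $h+1,\dots,g$ of $\omega$. Reading off the rules of Algorithm B, its labels are exactly the set $X(\eta_j)$ in \eqref{eqn:non-empty-section}, with the three cases matching.
\begin{itemize}
\item A wide plateau gives $\{b_j\}\cup\{h+1+b_j,\dots,g-1+b_j\}$: the marked last step gets $b_j$, and the other steps $i=h+1,\dots,g-1$ get $i+b_j$.
\item A section followed by an up step gives $\{h+1+b_j,\dots,g+b_j\}$.
\item Any other section gives $\{h+b_j,\dots,g-1+b_j\}$.
\end{itemize}
In particular the labels inside one section are distinct. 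It then suffices to compare labels coming from different sections, which I treat in two classes.

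\textbf{Marked labels.} These are the numbers $b_h$ with $h$ a peak $c_t$ of $\tau$, so each lies in $\{0,\dots,d-1\}$. Between two consecutive peaks of $\tau$ there is a valley, so $b$ takes distinct values at distinct peaks, and the marked labels are pairwise distinct.

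\textbf{Unmarked labels.} These are $i+b_j+\varepsilon$ with $\varepsilon\in\{0,1\}$, where $\varepsilon=0$ exactly when $\eta_j$ is preceded by a down step and is neither a plateau nor followed by an up step.
\begin{itemize}
\item \emph{Distinctness.} Take unmarked steps at positions $i<i'$ in sections $j<j'$. Then $i'-i\ge j'-j+1$, because the $j'-j$ non-horizontal steps $x_{j+1},\dots,x_{j'}$ lie strictly between them. On the other side, $b_j-b_{j'}$ counts the valleys of $\tau$ in $(j,j']$. Each valley consumes a $DU$ pair, so this count is at most $\lceil (j'-j)/2\rceil$. In the bad case $\varepsilon=1$, $\varepsilon'=0$, we have $x_{j'}=D$, so $j'$ is not a valley and the count drops accordingly. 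Together these give that the labels strictly increase from left to right.
\item \emph{Separation from marked labels.} Every unmarked label should be $\ge d$. The idea is that the number of valleys of $\tau$ weakly to the right of the step, together with $i$, is at least the number of peaks. This holds because peaks and valleys of $\tau$ alternate, and the first peak precedes the first valley.
\item \emph{Range.} The largest possible label comes from the rightmost horizontal step, of the form $i+b_j+\varepsilon$. When $\varepsilon=1$, the following up step forces at least one more step after it, which keeps the label at most $n-1$. When $\varepsilon=0$, the label is at most $n-1+b_{2\ell}=n-1$.
\end{itemize}

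\textbf{Expected obstacle.} The main difficulty will be the careful inequality bookkeeping between unmarked labels of different sections, especially when $\varepsilon$ changes from one section to the next, and the proof that unmarked labels never drop into $\{0,\dots,d-1\}$. If the direct counting becomes messy, I would try instead to identify each label with the comajor increment of Lemma \ref{lem.insertion}(2) obtained by inserting that horizontal step last. In other words, I would show that the labels coincide with the values $f$ produced by Algorithm A run on the horizontal steps in order, and then apply Lemma \ref{lem.insertion}(1), which gives distinctness and the range bound at once.
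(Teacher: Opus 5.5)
Your outline matches the paper's: $\tau$ is a Dyck path, the labels are distinct, and the largest label is at most $n-1$. The paper only asserts that the labels are distinct. Your three-part argument for distinctness is therefore a real addition: marked labels are $b_{c_t}=d-t<d$; unmarked labels strictly increase from left to right, by comparing $i'-i\ge j'-j+1$ with the valley count; and unmarked labels are $\ge d$. The monotonicity part is sound. Two steps need repair, though.

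\emph{The separation step is off by one.} Take $i$ to be the index of the step. Algorithm B gives an unmarked step the label $i+b_j$ if $\varepsilon=1$ and $i-1+b_j$ if $\varepsilon=0$. So the uniform formula is $i-1+b_j+\varepsilon$, not $i+b_j+\varepsilon$; your own section-by-section list already has it right. The shift does not affect monotonicity. However, your separation inequality $i+b_j\ge d$ then only covers $\varepsilon=1$, and for $\varepsilon=0$ you need one more unit. That unit is available:
\begin{itemize}
\item Since $\tau$ has exactly $d-1$ valleys, $d=b_j+1+V(j)$, where $V(j)$ counts the valleys of $\tau$ at positions $\le j$.
\item An $\varepsilon=0$ section is preceded by a $D$, so $j\ge 1$. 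Since $x_1=U$, no valley sits at position $1$, so $V(j)\le j-1$.
\item Together with $i\ge j+1$ this gives $i-1+b_j\ge j+b_j\ge d$.
\end{itemize}
Relatedly, in the case $\varepsilon=1$, $\varepsilon'=0$, the reason $j'$ is not a valley is that $x_{j'+1}\ne U$, because $\eta_{j'}$ is not followed by an up step. The fact $x_{j'}=D$ is what a valley at $j'$ would \emph{require}, so it proves nothing.

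\emph{The range step is not justified as written.} There are three problems:
\begin{itemize}
\item The largest label need not come from the rightmost horizontal step, since that step can be marked. For example, $\omega=HUHD$ gives labels $1$ and $0$ from left to right.
\item The phrase ``one more step after it'' ignores the contribution of $b_j$, and it ignores the wide-plateau case of $\varepsilon=1$.
\item The bound ``$\le n-1+b_{2\ell}$'' is only evident when $j=2\ell$.
\end{itemize}
Bound each unmarked label directly instead. If $j<2\ell$, the steps $x_{j+1},\dots,x_{2\ell}$ come after index $i$, so $i\le n-(2\ell-j)$. The valleys counted by $b_j$ lie in $\{j+1,\dots,2\ell-1\}$, so $b_j\le 2\ell-j-1$. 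Hence the label is at most $i+b_j\le n-1$. If $j=2\ell$, the section is preceded by $D$ and ends the path, so the label is $i-1\le n-1$.

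With these fixes your direct argument is complete, and the fallback through Algorithm A is unnecessary.
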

\begin{proof}
The cases for $k=0$ and $k=n$ are clear. 
For $1\le k\le n-1$, let $\omega$ be decomposed as in \eqref{eqn:factorization}. By Algorithm B, the Dyck path $\tau$ is obtained from $\omega$ by removing the $k$ horizontal steps. Hence $\tau \in \mathcal{C}(\ell)$. Moreover, the $k$ horizontal steps are labeled by distinct non-negative integers. The sequence $\alpha=(a_1,a_2,\dots,a_k)$ is obtained by arranging these labels in increasing order. Suppose the greatest label $a_k$ appears in the section $\eta_j$ for some $j$. By Algorithm B (1), if $\eta_j$ is a wide plateau, then $j\leq 2\ell-1$. We observe that $a_k\leq n-2$ (since the last step of $\eta_j$ is marked). By Algorithm B (2), if $\eta_j$ is followed by an up step, then $j\leq 2\ell-2$. We observe that $a_k\leq n-2$. If $\eta_j$ is neither a wide plateau nor followed by an up step, then $j\leq 2\ell$ and $a_k\leq n-1$. Hence $\alpha\in\mathcal{T}(n,k)$. The result follows.
\end{proof}

\begin{proof}[Proof of Theorem \ref{thm.tuple-Dyck-to-Motzkin}]

    It suffices to consider the case $1\le k\le n-1$. Given an ordered pair $(\alpha,\tau)\in\mathcal{T}(n,k) \times \mathcal{C}(\ell)$, where $\alpha=(a_1,a_2,\dots,a_k)$, let $\omega$ be the Motzkin path constructed from $(\alpha,\tau)$. 
    By Lemma \ref{lem.insertion} (1)-(2), for $1\leq i\leq k$, each $a_i$ is associated to a unique horizontal step in some section $\eta_j$ of horizontal steps of $\omega$, where $j\in\{0,1,\dots,2\ell\}$. Thus, $\omega$ is a Motzkin path of size $n=k+2\ell$ containing $k$ horizontal steps, that is, $\zeta: (\alpha,\tau) \mapsto \omega \in \mathcal{M}(n,k)$. By Lemma \ref{lem.insertion} (3), we have $\mathsf{comaj}(\omega)=\mathsf{comaj}(\tau)+\mathsf{add}(\alpha)$. Moreover, each non-empty section $\eta_j$ has the property presented in Lemma \ref{lem.insertion} (4). This property shows that, under Algorithm B, the map $\zeta^{\prime}: \omega \mapsto (\alpha, \tau)$. Thus, $\zeta^{\prime} \circ \zeta$ is an identity function on $\mathcal{T}(n,k) \times \mathcal{C}(\ell)$.

    On the other hand, given a Motzkin path $\omega \in \mathcal{M}(n,k)$, let $(\alpha,\tau)$ be the ordered pair constructed from $\omega$ via Algorithm B. By Lemma \ref{lem.inversezeta}, we have $\zeta^{\prime}:\omega \mapsto (\alpha,\tau) \in \mathcal{T}(n,k) \times \mathcal{C}(\ell)$. Let $\omega^{\prime}$ be the image of $(\alpha,\tau)$ under the map $\zeta$ constructed via Algorithm A. Then each non-empty section $\eta_j$ of horizontal steps of $\omega^{\prime}$ has the property in Lemma \ref{lem.insertion} (4). It follows that $\omega^{\prime}=\omega$ with horizontal steps having the same labels. Thus, $\zeta \circ \zeta^{\prime}$ is an identity function on $\mathcal{M}(n,k)$. Therefore, the map $\zeta$ is a bijection, with $\zeta^{\prime}$ the inverse of $\zeta$. The announced bijection in Theorem \ref{thm.tuple-Dyck-to-Motzkin} is established.

\end{proof}

\begin{proof}[The second proof of Theorem \ref{thm.Guoconj}.]
    Let $0 \leq k \leq n$ with $n \equiv k \pmod{2}$. We set $n = k + 2 \ell$. It is well-known\footnote{This result was first due to MacMahon \cite{MacMahon}; see also \cite[Theorem 1.6]{Haglund08}. We note that the major statistic in the setting of \cite[Theorem 1.6]{Haglund08} translates to our comajor statistic on Dyck paths.} that the generating function of the set of Dyck paths $\mathcal{C}(\ell)$ by the comajor statistic is given by the $q$-Catalan number
    \begin{equation}
        \sum_{\tau \in \mathcal{C}(\ell)} q^{\mathsf{comaj}(\tau)} = \frac{1}{[\ell+1]_q} \begin{bmatrix}
            2\ell \\ \ell
        \end{bmatrix}_q
        :=C_\ell(q).
    \end{equation}

    It follows from a simple change of variable in \eqref{eq.qadd} that the $q$-enumeration formula for the set $\mathcal{T}(n,k)$ by $\mathsf{add}$ is given by
    \begin{equation}\label{eq.qaddstrict}
        \sum_{\alpha \in \mathcal{T}(n,k)} q^{\mathsf{add}(\alpha)} = q^{\binom{k}{2}}
        \begin{bmatrix}
            n \\ k
        \end{bmatrix}_q.
    \end{equation}

    By Theorem \ref{thm.tuple-Dyck-to-Motzkin}, 
    \begin{align*}
        \sum_{w \in \mathcal{M}(n,k)} q^{\mathsf{comaj}(w)} &= \sum_{(\alpha, \tau) \in \mathcal{T}(n,k) \times \mathcal{C}(\ell)} q^{\mathsf{add}(\alpha) + \mathsf{comaj}(\tau)} \\
        &= \left( \sum_{\alpha \in \mathcal{T}(n,k) } q^{\mathsf{add}(\alpha)} \right) \left( \sum_{\tau \in \mathcal{C}(\ell)} q^{\mathsf{comaj}(\tau)} \right) \\
        &= q^{\binom{k}{2}} \begin{bmatrix}
            n \\ k
        \end{bmatrix}_q
        C_{\ell}(q),
    \end{align*}
    as desired. This completes the second proof of Theorem \ref{thm.Guoconj}.
\end{proof}

\section{Concluding remarks}\label{sec.remarks}

In summary, our shape algorithm determines the shape of the Richardson tableau from the corresponding Motzkin path without applying the Robinson--Schensted algorithm. Keeping track of the major and comajor statistics under the local bijection by inserting unit Motzkin paths gives a combinatorial proof of the $q$-enumeration formula for Richardson tableaux of a fixed shape, together with its comajor counterpart. Also, we prove Guo's conjecture on Richardson tableaux with a prescribed number of odd columns in two ways.

We conclude with a few questions. One may wonder if there is a refinement that keeps track simultaneously of the shape and the number of odd columns, together with the major or comajor statistic. Such a refinement would contain both of our main $q$-enumeration results as special cases. Also, since Richardson tableaux arise naturally from the geometry of Springer fibers, it would be interesting to understand whether the shape algorithm or the local bijection has a direct geometric interpretation.
\vspace{1em}


\noindent \textbf{Acknowledgements.} This research was supported by the National Science and Technology Council, Taiwan, through grants 113-2115-M-003-010-MY3 (S.-P. Eu), 113-2115-M-153-002 (T.-S. Fu), and 114-2811-M-003-024 (Y.-L. Lee).



\bibliographystyle{plain}
\bibliography{Richardson}



\end{document}